\numberwithin{equation}{section}
\newtheorem{thm}{Theorem}[section]
\newtheorem{prop}[thm]{Proposition}
\newtheorem{lemma}[thm]{Lemma}
\newtheorem{cor}[thm]{Corollary}
\theoremstyle{definition}
\newtheorem*{rem}{Remark}
\newcommand{\ds}{\displaystyle}
\newcommand{\surj}{\twoheadrightarrow}
\newcommand{\embed}{\hookrightarrow}
\newcommand{\Bl}{{\rm Bl}}
\newcommand{\Sym}{{\rm Sym}}
\newcommand{\tor}{{\rm tor}}
\newcommand{\e}{\varepsilon}
\newcommand{\id}{{\rm id}}
\newcommand{\Gr}{{\rm Gr}}
\newcommand{\BBS}{\mathbb{S}}
\newcommand{\CD}{\xymatrix@R=1pc@C=1pc}
\newcommand{\CDR}{\xymatrix@R=1pc}
\newcommand{\CDC}{\xymatrix@C=1pc}
\DeclareMathOperator{\QQuot}{{\it\mathfrak{Q}uot}} \DeclareMathOperator{\Quot}{Quot}
\DeclareMathOperator{\EExt}{{\it\mathcal{E}\!xt}} \DeclareMathOperator{\Ext}{Ext}
\DeclareMathOperator{\TTor}{{\it\mathcal{T}\!or}} 
\DeclareMathOperator{\HHom}{{\it\mathcal{H}\!om}} \DeclareMathOperator{\Hom}{Hom}
\DeclareMathOperator{\rank}{rank} \DeclareMathOperator{\Coker}{Coker}
\DeclareMathOperator{\Supp}{Supp} \DeclareMathOperator{\Spec}{Spec}
\DeclareMathOperator{\PProj}{{\bf Proj}} 
\DeclareMathOperator{\Img}{Im} \DeclareMathOperator{\Mor}{Mor}
\DeclareMathOperator{\codim}{codim} 
\title[A Compactification of the Space of Parametrized Rational Curves]
{A Compactification of the Space of Parametrized Rational Curves in Grassmannians}
\date{}
\author{Yijun Shao}
\address{Department of Mathematics and Computer Science, Wesleyan University, CT 06459}
\email{yshao@wesleyan.edu}
\begin{document}

\maketitle

\begin{abstract}
We construct an explicit compactification for the space of parametrized rational curves in a Grassmanian by a nonsingular projective variety such that the boundary is a divisor with simple normal crossings. This compactification is obtained by blowing up a Quot scheme successively along appropriate subschemes.
\end{abstract}

\section{Introduction}

Let $V$ be a vector space of dimension $n$ over an algebraically closed field $\Bbbk$ and denote by $\Gr(k,V)$ the Grassmannian of $k$-dimensional subspaces of $V$. By a parametrized rational curve in $\Gr(k,V)$ we mean a morphism from $\mathbb{P}^1$ to $\Gr(k,V)$.  The space $\Mor_d(\mathbb{P}^1,\Gr(k,V))$ of all morphisms of degree $d$ from $\mathbb{P}^1$ to $\Gr(k,V)$ is a smooth quasi-projective variety for each $d\geq 0$, and is non-compact for $d\geq 1$.
%This space is of interest in many contexts including theoretical physics and control theory as well as algebraic geometry (see, for example, \cite{Str87,Ber97,FR95,Witten,DS08,Lom90,RR94,HS02}).
In this paper, we provide an explicit compactification for this space which satisfies the following conditions: the compactification is a nonsingular projective variety and the boundary is a divisor with simple normal crossings. This extends the result in \cite{HLS}, where maps to a projective space are considered.

The main tools of the construction include (i) a Quot scheme which serves as a smooth compactification of $\Mor_d(\mathbb{P}^1,\Gr(k,V))$, and (ii) a sequence of {blowups} which turns the boundary into a divisor with simple normal crossings. The method of construction is as follows. A smooth compactification of $\Mor_d(\mathbb{P}^1,\Gr(k,V))$ is given by the Quot scheme $\Quot^{n-k,d}_{V_{\mathbb{P}^1}/\mathbb{P}^1/\Bbbk}$, which parametrizes all quotient sheaves of the trivial vector bundle $V_{\mathbb{P}^1}$ on $\mathbb{P}^1$ of rank $n-k$ and degree $d$. For short notations, we introduce $Q_d:=\Quot^{n-k,d}_{V_{\mathbb{P}^1}/\mathbb{P}^1/\Bbbk}$ and $\mathring{Q}_d:=\Mor_d(\mathbb{P}^1,\Gr(k,V))$. We define a chain of closed subschemes in the boundary $Q_d\setminus\mathring{Q}_d$:
\[
Z_{d,0}\subset Z_{d,1}\subset \cdots \subset Z_{d,d-1}=Q_d\setminus \mathring{Q}_d.
\]
Set-theoretically, $Z_{d,r}$ is the locus of the quotients whose torsion part has degree (or length) at least $d-r$; its scheme structure is given as a determinantal subscheme. We then blow up the Quot scheme successively along these closed subschemes $Z_{d,r}$. Starting with $Q_d^{-1}:=Q_d$, $Z_{d,r}^{-1}:=Z_{d,r}$, we let $Q_d^r$ ($r=0,\dots,d-1$) be the blowup of $Q_d^{r-1}$ along $Z_{d,r}^{r-1}$, $Z_{d,r}^r$ the exceptional divisor, and $Z_{d,l}^r$ the proper transform of $Z_{d,l}^{r-1}$ in $Q_d^r$ for $l\neq r$. Set $\widetilde Q_d:=Q_d^{d-1}$ and $\widetilde Z_{d,r}:=Z_{d,r}^{d-1}$. Our goal is to prove
\begin{thm}\label{Goal}
The final outcome $\widetilde Q_d$ is a compactification of $\mathring{Q}_d=\Mor_d(\mathbb{P}^1,\Gr(k,V))$ satisfying the following properties:
\begin{enumerate}
\item $\widetilde Q_d$ is an irreducible nonsingular projective variety;
\item $\widetilde Z_{d,r}$'s are irreducible nonsingular subvarieties of codimension one and they intersect transversally, so that the boundary $\widetilde Q_d\setminus \mathring{Q}_d=\bigcup_{r=0}^{d-1}\widetilde Z_{d,r}$ is a divisor with simple normal crossings.
\end{enumerate}
\end{thm}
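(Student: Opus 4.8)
The plan is to deduce everything from a local analysis, since smoothness, transversality and the normal crossings property are all \'etale-local. Two reductions come first. Because each blowup $Q_d^r\to Q_d^{r-1}$ is an isomorphism away from the center $Z_{d,r}^{r-1}$, and every center is supported in the boundary, the map $\widetilde Q_d\to Q_d$ is an isomorphism over $\mathring Q_d$ and the preimage of $Q_d\setminus\mathring Q_d$ is exactly $\bigcup_{r}\widetilde Z_{d,r}$ (each $\widetilde Z_{d,r}$ being the exceptional divisor $Z_{d,r}^{r}$ carried along by further strict transforms); so only the assertions about $\widetilde Q_d$ and the $\widetilde Z_{d,r}$ require work. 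The organizing principle is then an induction on $r=0,\dots,d-1$: I claim that at the $r$-th stage the center $Z_{d,r}^{r-1}$ is a smooth subvariety of the smooth variety $Q_d^{r-1}$ which is in normal crossings position with the configuration consisting of the already-created exceptional divisors $Z_{d,0}^{r-1},\dots,Z_{d,r-1}^{r-1}$ and the strict transforms $Z_{d,r+1}^{r-1},\dots,Z_{d,d-1}^{r-1}$ (i.e.\ locally all of these, together with the center, are coordinate subspaces). Granting this, blowing up a smooth center lying in normal crossings position inside a smooth ambient variety keeps the ambient variety smooth, turns the center into a smooth exceptional divisor, keeps the strict transforms smooth, and keeps the total configuration simple normal crossings — so the inductive hypothesis is reproduced one step on. The base case is that $Q_d$ is smooth (recalled in the introduction) and that $Z_{d,0}$ is smooth: a quotient whose torsion has the maximal length $d$ is $\mathcal{O}^{n-k}\oplus T$ for a locally free rank-$(n-k)$ quotient of $V_{\mathbb{P}^1}$ (a constant map to $\Gr(k,V)$) and a length-$d$ torsion quotient $T$ of the kernel bundle $\mathcal{O}^k$, which exhibits $Z_{d,0}$ as a bundle over $\Gr(k,V)$ with fiber the Quot scheme of length-$d$ torsion quotients of $\mathcal{O}^k_{\mathbb{P}^1}$, hence smooth and irreducible.

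The heart of the matter is the inductive step, for which I would establish a local structure theorem: around each closed point of $Z_{d,r}$ the Quot scheme $Q_d$ has an \'etale (or formal) neighborhood isomorphic to $U\times M$ with $U$ smooth, under which $Z_{d,\ell}$ corresponds to $U\times M_\ell$ for every $\ell\le r$, where $(M;\,M_0\subset M_1\subset\cdots\subset M_r)$ is a fixed model coming from the local deformation theory of torsion quotients. Concretely, if the torsion of the quotient at that point is supported at distinct points $p_1,\dots,p_s$ with lengths $e_1+\cdots+e_s=d-r$, one expects the neighborhood to split as a product, over the $p_j$, of the analogous one-point local models, times a smooth factor accounting for the locally free lower-degree part of the quotient; the one-point model is the germ at the origin of a local Quot scheme of bounded-length torsion quotients of $\mathcal{O}^k$, and $M_\ell\subset M$ is the determinantal (Fitting-ideal) sublocus where the torsion length is at least $d-\ell$. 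Obtaining this product decomposition — separating the torsion point by point and separating the torsion from the locally free part, and then matching the Fitting-ideal description of $Z_{d,r}$ with the determinantal structure of the model — is where the scheme-theoretic definitions set up earlier in the paper are used essentially, and I expect it to be the principal obstacle, precisely because $Z_{d,r}$ is singular along $Z_{d,r-1}$, so the product neighborhood and the nested chain $M_\bullet$ must be pinned down with care.

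With the local model in hand the remaining task is internal to $M$: perform the successive blowups $M=M^{-1}\leftarrow M^0\leftarrow\cdots\leftarrow M^r$ along $M_0$, then the strict transform of $M_1$, and so on, and verify in affine charts that every $M^j$ is smooth, that every strict transform $M_\ell^{j}$ with $\ell>j$ is again smooth, and that the accumulated exceptional divisors of $M^j$ together with $M_{j+1}^{j}$ form a normal crossings configuration. The nested, flag-like shape of $M_\bullet$ is exactly what makes this succeed: blowing up the smallest member $M_0$ first separates the branches of $M_1$ and makes its strict transform smooth and transversal to the new exceptional divisor, and the pattern propagates — the same mechanism as in the projective-space case of \cite{HLS}, with the there-used model replaced by its Grassmannian analogue. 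Transporting the chart computations back through the local structure theorem yields at once that $\widetilde Q_d$ is smooth and that the $\widetilde Z_{d,r}$ are smooth divisors crossing normally, proving (1) (smoothness) and (2). The outstanding global points are then routine: $\widetilde Q_d$ is projective, being an iterated blowup of the projective scheme $Q_d$ along closed subschemes, and irreducible, since $Q_d$ is irreducible and blowing up an irreducible variety along a proper closed subscheme preserves irreducibility; and each $\widetilde Z_{d,r}$ is irreducible because $Z_{d,r}$ is irreducible (it is the closure of the irreducible locally closed locus of quotients whose torsion has length exactly $d-r$, which fibers over $\mathring Q_r$), because the exceptional divisor $Z_{d,r}^{r}$ is irreducible (a projective, resp.\ Grassmann, bundle over the irreducible center), and because strict transforms and further blowups of irreducible subvarieties remain irreducible. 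This establishes Theorem~\ref{Goal}.
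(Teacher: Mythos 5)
Your proposal takes a genuinely different route from the paper, and as written it has a real gap at its center. The entire argument is routed through a local structure theorem --- an \'etale or formal product decomposition $U\times M$ near each point of $Z_{d,r}$ under which every $Z_{d,\ell}$, $\ell\le r$, becomes $U\times M_\ell$ for a fixed nested model $M_\bullet$, followed by explicit chart computations of the model blowups. You correctly identify this as ``the principal obstacle,'' but you do not supply it, and nothing in the paper provides it either: the paper never produces local models or charts. Instead it proves everything globally, in three moves: (i) Vainsencher's Theorem 2.4(8) on blowing up determinantal ideals in spaces of complete collineations is transported to $Q_d$ (Proposition \ref{IdealFactor1}), showing the ideals of the proper and total transforms of $Z_{d,r}$ differ by an invertible sheaf, so the $r$-th blowup may be taken along the total transform and $Q_d^r$ embeds into a product of collineation spaces; (ii) via the diagram (\ref{ZembedDiagram}) this yields a global isomorphism $Z_{d,r}^{r-1}\cong Q_{d,r}\times_{Q_r}Q_r^{r-1}$, where $Q_{d,r}$ is the smooth auxiliary Quot scheme of \S4; (iii) smoothness then follows by a double induction in which the statement for $Q_d$ at stage $r$ is deduced from the statement for the lower-degree Quot scheme $Q_r$ at stage $r-1$, and transversality follows from a tangent-space count transported through the same isomorphism. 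None of this requires knowing what $Q_d$ looks like locally.

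Two further points. First, your inductive hypothesis is too strong: you ask that the strict transforms $Z_{d,r+1}^{r-1},\dots,Z_{d,d-1}^{r-1}$ of the \emph{later} centers already sit in a normal crossings (hence smooth) configuration at stage $r-1$, but $Z_{d,\ell}$ is singular along $Z_{d,\ell-1}$ and its strict transform is only shown (and only expected) to be smooth once the stage-$(\ell-1)$ blowup has been performed; the paper's Theorems \ref{MainTheorem} and \ref{SmoothDivisor} deliberately assert smoothness of $Z_{d,l}^r$ only for $r\ge l-1$. Your induction can be repaired by dropping the later strict transforms from the hypothesis, but the local product decomposition would still have to be proved, including the delicate matching of the Fitting-ideal scheme structures of the $Z_{d,\ell}$ with the model, precisely where the nesting makes the loci singular. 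Second, your base case ($Z_{d,0}$ smooth and irreducible as a torsion-quotient bundle over $\Gr(k,V)$) and your closing global remarks (projectivity, irreducibility of iterated blowups and of the $\widetilde Z_{d,r}$) do agree with the paper, which obtains them from the isomorphism $\mathring{Q}_{d,r}\cong\mathring{Z}_{d,r}$ and the irreducibility of $Q_{d,r}$. In short: the skeleton of your argument is a legitimate alternative strategy (closer in spirit to the projective-space case of \cite{HLS}), but the load-bearing lemma is missing, and the paper's actual proof replaces it with an entirely different, global mechanism.
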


This way of producing such type of compactifications was used in many classical examples such as the construction of the space of complete quadrics \cite{Vai82}, the space of complete collineations \cite{Vain84}, Fulton-MacPherson compactification \cite{FM94}, MacPherson-Procesi compactification \cite{MP98}, etc. Our construction is much closer to that of the spaces of complete objects (quadrics, etc.), and in fact, we used in our proof a result from Vainsencher's construction of complete collineations. Because of the strong similarity between our compactification and those spaces of complete objects, we believe that our compactification is also a parameter space of similar kind, although the interpretation of the objects that this space parametrizes can be quite subtle. This is a direction being pursued for a future publication \cite{HS}.

Although the method of the construction is straightforward to describe, the proof is more involved. We outline some key steps of the proof here. In $\S 3$, we endow the sets $Z_{d,r}$ with the structure of determinantal subschemes. This is done with the help of the universal exact sequence of $Q_d$: $0\to \mathcal{E}_d\to V_{\mathbb{P}^1_{Q_d}}\to \mathcal{F}_d\to 0$, where $\mathcal{E}_d$ is locally free. For each integer $m$, there is an induced homomorphism $\rho_{d,m}:  \pi_*V_{\mathbb{P}^1_{Q_d}}^\vee(m)\to \pi_*\mathcal{E}_d^\vee(m)$. Then $Z_{d,r}$ is defined to be the zero locus of the exterior power $\bigwedge^{k(m+1)+r+1}\rho_{d,m}$ for any $m\gg 0$. Let $\mathring{Z}_{d,r}:=Z_{d,r}\setminus Z_{d,r-1}$ be the open subscheme of $Z_{d,r}$. We can show that the locally closed subschemes $Z_{d,0}$, $\mathring{Z}_{d,1}$, $\cdots$, $\mathring{Z}_{d,d-1}$, $\mathring{Q}_d$ form a flattening stratificaiton of $Q_d$ by the sheaf $\mathcal{F}_d^\e:=\EExt^1(\mathcal{F}_d,\mathcal{O}_{\mathbb{P}^1_{Q_d}})$. In $\S 4$, we show that there is a natural morphism from an irreducible nonsingular variety $Q_{d,r}$ to $Q_{d}$ whose (set-theoretic) image is $Z_{d,r}$ for each $r$. Here $Q_{d,r}$ is a relative Quot scheme over $Q_r$ parametrizing torsion quotients of $\mathcal{E}_r$ that are flat over $Q_r$ with degree $d-r$. This morphism maps the open subset $Q_{d,r}\times_{Q_r}\mathring{Q}_r$ of $Q_{d,r}$ isomorphically onto $\mathring{Z}_{d,r}$, based on the fact that $\mathcal{F}_d^\e$ is flat over $\mathring{Z}_{d,r}$. In $\S 5$ we first describe the procedure of {blowups}, and then prove some main theorems. We first show that the blowup along $Z_{d,r}^{r-1}$ (the proper transform of $Z_{d,r}$ in $Q_d^{r-1}$) is the same as the blowup along the total transform of $Z_{d,r}$ in $Q_d^{r-1}$. This allows us to embed the blowup $Q_d^r$ into a product of some spaces of complete collineations and then set up a key commutative diagram (see Diagram (\ref{ZembedDiagram})). Using this diagram, we can show that the proper transform $Z_{d,r}^{r-1}$ is isomorphic to $Q_{d,r}\times_{Q_r}Q_r^{r-1}$. It is now clear that we should use induction to prove that $Z_{d,r}^{r-1}$ and $Q_d^r$ are nonsingular. The proof for the transversality of the intersections of $Z_{d,r}^{d-1}$'s in $Q_d^{d-1}$ is also based on the isomorphism between $Z_{d,r}^{r-1}$ and $Q_{d,r}\times_{Q_r}Q_r^{r-1}$.

\textbf{Acknowledgements.} The author is very grateful to his Ph.D advisor Yi Hu for guidance and support. He is also thankful to Ana-Maria Castravet, Kirti Joshi, Douglas Ulmer and Dragos Oprea for encouragement and helpful discussions.

\section{Preliminaries}

In this section, we will first give a brief review of Quot schemes, and then we fix some notations and review some basic facts.

Let $H$ be a sheaf over a scheme $X$. A quotient of $H$ is a sheaf $F$ on $X$ together with a surjective homomorphism $H\surj F$. Two quotients $H\surj F_1$ and $H\surj F_2$ are said to be \emph{equivalent} if there is an isomorphism $F_1\simeq F_2$ such that the following diagram commutes.
\[
\CDR{ %
 H\ar@{->>}[r]\ar@{=}[d] & F_1 \ar[d]^{\simeq}\\
 H\ar@{->>}[r] & F_2 \\
} %
\]
Equivalently, the two quotients are equivalent if and only if their kernels are equal as subsheaves of $H$. We denote by $[H\surj F]$ the equivalence class of a quotient $H\surj F$.

Let $S$ be a noetherian scheme, $X$ a projective $S$-scheme, and $H$ a coherent sheaf on $X$. For any $S$-scheme $T$, we denote by $H_T$ the pullback of $H$ by the projection $X\times_S T\to X$. Given a very ample line bundle $L$ on $X$ relative to $S$ and a polynomial $P(t)\in\mathbb{Q}[t]$, we define a contravariant functor, denoted by $\QQuot^{P,L}_{H/X/S}$ and called the Quot functor, from the category of $S$-schemes to the category of sets as follows: for any $S$-scheme $T$, let $\QQuot^{P,L}_{H/X/S}(T)$ be the set of equivalence classes of quotients $H_T\surj F$ where $F$ is flat over $T$ with Hilbert polynomial $P$ (relative to $L$).

This functor is represented by a projective $S$-scheme \cite{Gro}, called the (relative) Quot scheme, which we denote by $\Quot^{P,L}_{H/X/S}$. It is equipped with a \emph{universal quotient}
\[
\xymatrix{
  H_{\Quot^{P,L}_{H/X/S}}\ar@{->>}[r] & \mathcal{F},
}
\]
on $X\times_S \Quot^{P,L}_{H/X/S}$, where $\mathcal{F}$ is flat over $\Quot^{P,L}_{H/X/S}$ with $P$ as its Hilbert polynomial. Sometimes it will bring us convenience to add to the universal quotient its kernel to form the \emph{universal exact sequence}:
\[
0\to \mathcal{E}\to H_{\Quot^{P,L}_{H/X/S}}\to \mathcal{F}\to 0.
\]
\begin{thm}[Universal Property of Quot schemes]
For any $S$-scheme $Y$, any quotient $H_Y\surj \mathcal{Q}$ on $X\times_SY$ with $\mathcal{Q}$ flat over $Y$ with Hilbert polynomial $P$ determines a unique $S$-morphism $f: Y\to \Quot^{P,L}_{H/X/S}$ such that the pullback of the universal quotient by the induced morphism $\bar f: X\times_SY \to X\times_S\Quot^{P,L}_{H/X/S}$ is equivalent to the given quotient on $X\times_SY$:
\[
\CD{
  H_Y\ar@{->>}[rr]\ar@{=}[d] & & \mathcal{Q} \ar[d]^{\simeq}\\
  H_Y\ar@{=}[r] & \bar f^*H_{\Quot^{P,L}_{H/X/S}}\ar@{->>}[r] & \bar f^*\mathcal{F} \\
}
\]
\end{thm}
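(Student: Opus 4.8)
The plan is to obtain the universal property as a purely formal consequence of representability together with Yoneda's lemma; the only input needed beyond the statement itself is the existence result that $\Quot^{P,L}_{H/X/S}$ represents the functor $\QQuot^{P,L}_{H/X/S}$, cited from \cite{Gro}. Write $Q:=\Quot^{P,L}_{H/X/S}$ for brevity. Representability furnishes a natural isomorphism of functors $\Phi\colon \Hom_S(-,Q)\xrightarrow{\ \sim\ }\QQuot^{P,L}_{H/X/S}(-)$, and by construction the universal quotient $[H_Q\surj\mathcal{F}]\in\QQuot^{P,L}_{H/X/S}(Q)$ is the distinguished element $\Phi_Q(\id_Q)$, i.e.\ the image of the identity morphism. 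Recognizing the universal quotient as $\Phi_Q(\id_Q)$ is the one conceptual step, and it is exactly the universal element in Yoneda's lemma.

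Given the statement's data---an $S$-scheme $Y$ and a quotient $[H_Y\surj\mathcal{Q}]\in\QQuot^{P,L}_{H/X/S}(Y)$ with $\mathcal{Q}$ flat over $Y$ of Hilbert polynomial $P$---the first step is to produce $f$. Since $\Phi_Y\colon \Hom_S(Y,Q)\to\QQuot^{P,L}_{H/X/S}(Y)$ is a bijection, there is a unique $S$-morphism $f\colon Y\to Q$ with $\Phi_Y(f)=[H_Y\surj\mathcal{Q}]$; this yields both the existence and the uniqueness of $f$ at once. It then remains to verify the asserted compatibility, and for this I would appeal to the naturality of $\Phi$ along $f$, which says that the square
\[
\xymatrix{
\Hom_S(Q,Q)\ar[r]^-{\Phi_Q}\ar[d]_{(-)\circ f} & \QQuot^{P,L}_{H/X/S}(Q)\ar[d]^{\bar f^*}\\
\Hom_S(Y,Q)\ar[r]^-{\Phi_Y} & \QQuot^{P,L}_{H/X/S}(Y)
}
\]
commutes, the left arrow being precomposition with $f$ and the right arrow being the functor's action on $f$, namely pullback of quotients along $\bar f\colon X\times_SY\to X\times_SQ$. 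Chasing $\id_Q$ through the square one way gives $\bar f^*\Phi_Q(\id_Q)=\bar f^*[H_Q\surj\mathcal{F}]=[\bar f^*H_Q\surj\bar f^*\mathcal{F}]$, and the other way gives $\Phi_Y(\id_Q\circ f)=\Phi_Y(f)=[H_Y\surj\mathcal{Q}]$. Equating these two classes in $\QQuot^{P,L}_{H/X/S}(Y)$ yields $[\bar f^*H_Q\surj\bar f^*\mathcal{F}]=[H_Y\surj\mathcal{Q}]$, which by the definition of equivalence of quotients (equal kernels, equivalently an isomorphism of targets commuting with the two surjections) is precisely the commutative diagram in the statement.

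Thus the argument is entirely formal once representability is granted, and the main point requiring attention---the one I would flag as the only real obstacle---is the bookkeeping behind the equality $H_Y=\bar f^*H_Q$ appearing in the left column of the statement's diagram. This is the canonical identification of two pullbacks of $H$: both $\bar f^*(H_Q)$ and $H_Y$ are the pullback of $H$ along the structure morphism $X\times_SY\to X$, so they agree canonically. Making this identification explicit is what turns the left column into a literal equality rather than a mere isomorphism, and I would take care to record it before invoking naturality, so that the pulled-back universal quotient and the given quotient are genuinely quotients of one and the same sheaf $H_Y$.
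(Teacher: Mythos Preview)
Your argument is correct: the universal property is exactly the Yoneda-lemma unpacking of representability, and your care with the canonical identification $H_Y=\bar f^*H_Q$ is appropriate. The paper itself does not supply a proof of this theorem; it is stated in the preliminaries as a direct restatement of the representability result cited from \cite{Gro}, so your formal derivation is precisely what the paper is implicitly invoking.
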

A consequence of the uniqueness of the morphism is: if $f_1, f_2: Y\to \Quot^{P,L}_{H/X/S}$ are two $S$-morphisms such that the pullbacks of the universal quotient are equivalent:
\[
\xymatrix@R=8pt@C=1pc{
  H_Y\ar@{=}[r]\ar@{=}[d] & \bar f_1^*H_{\Quot^{P,L}_{H/X/S}}\ar@{->>}[r] & \bar f_1^*\mathcal{F} \ar[d]^{\simeq}\\
  H_Y\ar@{=}[r] & \bar f_2^*H_{\Quot^{P,L}_{H/X/S}}\ar@{->>}[r] & \bar f_2^*\mathcal{F} \\
}
\]
then $f_1=f_2$. Note that it is not sufficient to claim $f_1=f_2$ if we only know an isomorphism $\bar f_1^*\mathcal{F}\simeq \bar f_2^*\mathcal{F}$. The commutativity of the above diagram is crucial.

In the special case that $X=S$ and $P(t)$ is a constant integer $r$, the Quot scheme becomes a (relative) Grassmannian, which we denote by $\Gr_S(H,r)$. If in addition $H$ is locally free, we also denote this Grassmannian by $\Gr_S(m,H)$ where $m=\rank H-r$.

Most references (for example, \cite{Huy,Nitsure,Ser06}) prove the representability of the Quot functor by constructing the Quot scheme as a closed subscheme of a Grassmannian. Therefore a natural consequence of this construction is the embedding of the Quot scheme into a Grassmannian, but this fact is rarely isolated as a theorem. For convenience of reference, we state this fact below.
\begin{thm}[Embedding of Quot into Grassmannian]\label{quotembedding}
Suppose $H$ is flat over $S$ with Hilbert polynomial $R(t)\in \mathbb{Q}[t]$ and put $Q:=\Quot^{P,L}_{H/X/S}$. Let $\pi: X\to S$ and $\sigma: Q\to S$ be the structure morphisms, and $\pi_Q: X\times_SQ\to Q$ the projection. There is an integer $N$ such that for all $m\geq N$
\begin{enumerate}
\item $\pi_{Q*}(H_Q(m))=\sigma^*\pi_{*}(H(m))$,

\item $\pi_{Q*}(H_Q(m))$ and $\pi_{Q*}(\mathcal{F}(m))$ are locally free of rank $R(m)$ and $P(m)$ respectively, and the induced homomorphism $\pi_{Q*}(H_Q(m))\to \pi_{Q*}(\mathcal{F}(m))$ is surjective, and

\item the morphism $Q\to\Gr_S(\pi_{*}(H(m)),P(m))=\Gr_S(R(m)-P(m),\pi_{*}(H(m)))$ induced by the surjective homomorphism $\sigma^*\pi_{*}(H(m))\to \pi_{Q*}(\mathcal{F}(m))$ is a closed embedding.
\end{enumerate}
\end{thm}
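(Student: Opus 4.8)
The plan is to reconstruct the relevant part of Grothendieck's construction of $Q$, arranged so that the three assertions come out in turn; the one point needing care beyond routine cohomology bookkeeping is to keep track of the universal kernel $\mathcal{E}$ and not merely of $\mathcal{F}$. First I would produce the integer $N$. Observe that $\mathcal{E}$, being the kernel of the surjection $H_Q\surj\mathcal{F}$ between sheaves that are flat over $Q$ (flatness of $H_Q$ over $Q$ coming from that of $H$ over $S$ by base change), is itself flat over $Q$. Since $Q$ is projective, hence of finite type, over the noetherian scheme $S$, the relative form of Serre vanishing together with the theorem on cohomology and base change yields an $N$ such that for every $m\geq N$: one has $R^i\pi_{Q*}(\mathcal{F}(m))=R^i\pi_{Q*}(\mathcal{E}(m))=R^i\pi_*(H(m))=0$ for all $i>0$; the sheaves $\pi_{Q*}(\mathcal{F}(m))$, $\pi_{Q*}(\mathcal{E}(m))$ and $\pi_*(H(m))$ are locally free of ranks $P(m)$, $R(m)-P(m)$ and $R(m)$ respectively (read off from the Hilbert polynomials by flatness), and their formation commutes with arbitrary base change; and on every fiber the sheaves $H$, $\mathcal{F}$, $\mathcal{E}$ are $m$-regular, so that $H(m)$, $\mathcal{F}(m)$, $\mathcal{E}(m)$ are generated by their relative global sections. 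Enlarging $N$ finitely many times is harmless, each condition being stable under increasing $m$.

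Assertion (1) is then immediate: $H_Q$ is the pullback of $H$ along $X\times_SQ\to X$ and $\pi_Q$ is the base change of $\pi$ along $\sigma\colon Q\to S$, so commutation of $\pi_*(H(m))$ with base change gives $\pi_{Q*}(H_Q(m))=\sigma^*\pi_*(H(m))$, which is in particular locally free of rank $R(m)$. For (2), twisting the universal exact sequence $0\to\mathcal{E}\to H_Q\to\mathcal{F}\to 0$ by $m$ and applying $\pi_{Q*}$ gives the exact sequence $0\to\pi_{Q*}(\mathcal{E}(m))\to\pi_{Q*}(H_Q(m))\to\pi_{Q*}(\mathcal{F}(m))\to R^1\pi_{Q*}(\mathcal{E}(m))=0$, whence the induced homomorphism $\pi_{Q*}(H_Q(m))\to\pi_{Q*}(\mathcal{F}(m))$ is surjective onto the rank-$P(m)$ locally free sheaf $\pi_{Q*}(\mathcal{F}(m))$.

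For (3), the surjection $\sigma^*\pi_*(H(m))=\pi_{Q*}(H_Q(m))\surj\pi_{Q*}(\mathcal{F}(m))$ onto a rank-$P(m)$ locally free sheaf defines, by the universal property of the relative Grassmannian, an $S$-morphism $\phi_m\colon Q\to G_m:=\Gr_S(\pi_*(H(m)),P(m))$. Since $Q$ is proper over $S$ and $G_m$ is separated over $S$, $\phi_m$ is proper; as a proper monomorphism of schemes is a closed immersion, it is enough to show $\phi_m$ is a monomorphism, i.e.\ that $Q(T)\to G_m(T)$ is injective for every $S$-scheme $T$. By the universal property of $Q$, a $T$-point of $Q$ is an equivalence class $[H_T\surj F]$ with $F$ flat over $T$ of Hilbert polynomial $P$; applying the base-change statement above with $T$ in place of $Q$ identifies $\pi_{T*}(H_T(m))$ with the pullback of $\pi_*(H(m))$ to $T$ and shows the image of this point under $\phi_m$ to be the quotient $u_F\colon\pi_{T*}(H_T(m))\surj\pi_{T*}(F(m))$, equivalently the subsheaf $K_F:=\ker(u_F)$. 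The crux is that $\ker(H_T\surj F)$ can be recovered from $K_F$: writing $\mathcal{E}_F:=\ker(H_T\surj F)$ — which is flat over $T$ and $m$-regular on each fiber of $X\times_ST\to T$, this being exactly where the regularity bound on the kernels enters — the evaluation map $\pi_T^*\pi_{T*}(\mathcal{E}_F(m))(-m)\to\mathcal{E}_F$ is surjective and $\pi_{T*}(\mathcal{E}_F(m))=K_F$, and a short diagram chase using naturality of evaluation then identifies $\mathcal{E}_F$ with the image of $\pi_T^*(K_F)(-m)$ under the (surjective, since $m\geq N$) evaluation map $\pi_T^*\pi_{T*}(H_T(m))(-m)\to H_T$. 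Two $T$-points of $Q$ with equal image in $G_m$ therefore have equal kernel in $H_T$, hence coincide, which gives the monomorphism property and so (3).

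The step I expect to be the main obstacle is this last one: the argument works only because a single twist by $m$ already pins down the quotient, and making that precise forces one both to bound the regularity of the kernel sheaves $\mathcal{E}$ alongside that of $\mathcal{F}$ and to run the evaluation-map diagram chase recovering $\mathcal{E}_F$ from $K_F$ carefully; everything preceding it is bookkeeping with relative Serre vanishing and cohomology-and-base-change.
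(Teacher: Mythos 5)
The paper offers no proof of this theorem at all: it is stated purely for reference, with the remark that the embedding is a byproduct of the standard construction of the Quot scheme inside a Grassmannian (as in Grothendieck, Huybrechts--Lehn, Nitsure, Sernesi), where one first bounds the family of kernels, embeds the Quot functor into the Grassmannian functor, and proves representability and projectivity by showing the image is closed. Your argument runs in the opposite direction and is correct: you take representability and projectivity of $Q$ as given, extract the uniform integer $N$ by applying relative Serre vanishing and cohomology-and-base-change to the single universal exact sequence $0\to\mathcal{E}\to H_Q\to\mathcal{F}\to 0$ on $X\times_S Q$ (so the uniformity over all $T$-points comes for free from the noetherianity of $Q$ and pullback, rather than from an a priori boundedness theorem for the kernels), and then prove the induced map to $\Gr_S(\pi_*(H(m)),P(m))$ is a closed immersion by showing it is a proper monomorphism. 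The two genuinely substantive steps in your route are exactly the right ones: the recovery of $\mathcal{E}_F$ as the image of $\pi_T^*(K_F)(-m)\to H_T$ under the evaluation map, which needs relative global generation of $\mathcal{E}(m)$ and the identification $\pi_{T*}(\mathcal{E}_F(m))=K_F$, and the appeal to the fact that a proper monomorphism is a closed immersion. What your approach buys is independence from the internals of the construction (no Castelnuovo--Mumford regularity bounds for an abstract bounded family, no flattening-stratification or valuative-criterion step to identify the image as closed); what it costs is reliance on the nontrivial general theorem on proper monomorphisms and on projectivity of $Q$ as an external input, whereas in the references the closed embedding is what \emph{proves} projectivity. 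Given how the paper frames the statement --- assuming $Q$ is already known to be a projective $S$-scheme --- your a posteriori derivation is a legitimate and self-contained alternative.
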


The Quot scheme has a base-change property: for any $S$-scheme $T$, $\Quot^{P,L}_{H/X/S}\times_S T=\Quot^{P,L_T}_{H_T/X\times_S T/T}$. When $S=\Spec\kappa$, $\kappa$ a field, the Quot scheme $\Quot^{P,L}_{H/X/\kappa}$ is a fine moduli space whose $\kappa$-valued points parametrize equivalence classes of quotients $H\surj F$ with $F$ having Hilbert polynomial $P$. In the case that $S$ is a general scheme, by the base-change property, $\Quot^{P,L}_{H/X/S}$ can be viewed as a family of Quot schemes parametrized by $S$: for any point $s\in S$, the fiber of $\Quot^{P,L}_{H/X/S}$ over $s$ is $\Quot^{P,L_s}_{H_s/X_s/\kappa(s)}$, where $\kappa(s)$ is the residue field of $s$. This is why $\Quot^{P,L}_{H/X/S}$ is also called a relative Quot scheme.

We now fix some notations. Let $\Bbbk$ be a fixed algebraically closed field and $\mathbb{P}^1$ the projective line over $\Bbbk$. For any $\Bbbk$-scheme $X$, we write $\mathbb{P}^1_X:=\mathbb{P}^1\times_\Bbbk X$ and denote the projection $\mathbb{P}^1_X\to X$ by $\pi_X$, or simply $\pi$. For any point $x\in X$, we denote by $\mathbb{P}^1_x$ the fiber $\pi_X^{-1}(x)=\mathbb{P}^1\times_\Bbbk\kappa(x)$ over $x$. For any coherent sheaf $F$ over $\mathbb{P}^1_X$, we write $F(m):=F\otimes p^*\mathcal{O}_{\mathbb{P}^1}(m)$, where $p$ denotes the projection $\mathbb{P}^1_X\to \mathbb{P}^1$.

For any morphism $f:Y\to X$ of $\Bbbk$-schemes, we denote by $\bar f: \mathbb{P}^1_X\to \mathbb{P}^1_Y$ the induced morphism: $\bar f(t,y)=(t,f(y))$. It is the same as the projection $\mathbb{P}^1_Y=\mathbb{P}^1_X\times_X Y\to \mathbb{P}^1_X$. If $Z\subset X$ is a subscheme and $F$ a coherent sheaf on $\mathbb{P}^1_X$, we denote by $F_Z$ the restriction of $F$ on $\mathbb{P}^1_Z$. In particular, if $x\in X$ is a point, then $F_x$ is the restriction of $F$ on $\mathbb{P}^1_x$.

In this paper, we will only encounter Quot schemes of the form $\Quot_{H/\mathbb{P}^1_S/S}^{P,\mathcal{O}_{\mathbb{P}^1_S}(1)}$, where $S$ is either $\Spec\Bbbk$ or a variety over $\Bbbk$. For any coherent sheaf $F$ on $\mathbb{P}^1$, its Hilbert polynomial has the form $P(t)=r(t+1)+d$, where $r=\rank F$ and $d=\deg F$. So we fix the notation:
\[\Quot^{r,d}_{H/\mathbb{P}^1_S/S}:=\Quot^{r(t+1)+d,\mathcal{O}_{\mathbb{P}^1_S}(1)}_{H/\mathbb{P}^1_S/S}.\]

Next we prove some basic properties of the functor $\EExt^1_X(-,\mathcal{O}_X)$. For any scheme $X$ and any sheaf $F$ over $X$, we put
\[
F^\vee:=\HHom_X(F,\mathcal{O}_X),\quad F^\e:=\EExt^1_X(F,\mathcal{O}_X).
\]
\begin{prop}\label{Pullback-Ext}
Let $0\to E_1\to E_0\to F\to 0$ be a short exact sequence of coherent sheaves on a scheme $X$ with
$E_0$ and $E_1$ locally free. For any scheme $Y$ and any morphism $f: Y\to X$, we have a canonical identification
\[
f^*(F^\varepsilon)=(f^*F)^\varepsilon
\]
\end{prop}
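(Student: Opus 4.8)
The plan is to read both sides off the given length-one locally free resolution. Write $\phi\colon E_1\to E_0$ for the inclusion, so that $F=\Coker\phi$. Applying $\HHom_X(-,\mathcal{O}_X)$ to $0\to E_1\xrightarrow{\phi}E_0\to F\to 0$ and using $\EExt^{i}_X(E_j,\mathcal{O}_X)=0$ for $i\ge 1$ (the $E_j$ being locally free), the long exact $\EExt$-sequence collapses to the exact sequence
\[
0\to F^\vee\to E_0^\vee\xrightarrow{\phi^\vee}E_1^\vee\to F^\e\to 0,
\]
which exhibits $F^\e=\Coker(\phi^\vee)$ canonically (and shows $\EExt^i_X(F,\mathcal{O}_X)=0$ for $i\ge 2$ as a byproduct).

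Next I would apply $f^*$. By right-exactness of $f^*$, $f^*F^\e=\Coker\big(f^*\phi^\vee\colon f^*E_0^\vee\to f^*E_1^\vee\big)$; and since the $E_j$ are locally free there are canonical identifications $f^*(E_j^\vee)=(f^*E_j)^\vee$ under which $f^*\phi^\vee$ becomes $(f^*\phi)^\vee$, so
\[
f^*F^\e=\Coker\big((f^*\phi)^\vee\colon (f^*E_0)^\vee\to (f^*E_1)^\vee\big).
\]
Applying $f^*$ to the original resolution also yields a right-exact sequence $f^*E_1\xrightarrow{f^*\phi}f^*E_0\to f^*F\to 0$ with $f^*E_0,f^*E_1$ locally free; if in addition $f^*\phi$ is injective, this is a length-one locally free resolution of $f^*F$, and the computation of the first paragraph applied on $Y$ gives $(f^*F)^\e=\Coker\big((f^*\phi)^\vee\big)$. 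Comparing the two displays gives the equality, and unwinding the identifications --- each given by a universal property --- shows it is canonical and compatible with composition of morphisms.

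The point that genuinely needs care, and which I expect to be the main obstacle, is exactly the injectivity of $f^*\phi$. Right-exactness of $f^*$ gives only the three-term exact sequence, and in general $\ker(f^*\phi)=L_1f^*F$ is nonzero; when it is, $(f^*F)^\e$ must be computed from a longer locally free resolution --- obtained by resolving this kernel and splicing --- so the comparison above no longer suffices. Thus the real content is the vanishing $L_1f^*F=0$. This holds automatically when $f$ is flat, and --- in the setting where the proposition is applied, with $F$ the universal quotient sheaf, which is flat over a base through which $f$ factors as a base change --- it follows from that relative flatness via base change for $\TTor$. Once the exactness of the pulled-back resolution is secured, the remainder is the formal cokernel bookkeeping above.
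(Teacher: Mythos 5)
Your route is the same as the paper's --- both compute $F^\e$ as the cokernel of $E_0^\vee\to E_1^\vee$ and compare the pullback of that cokernel with the cokernel of the dual of the pulled-back presentation --- but you have put your finger on the one point where the paper's own proof is deficient. The paper's comparison diagram asserts that $(f^*E_0)^\vee\to (f^*E_1)^\vee\to (f^*F)^\e\to 0$ is exact, ``obtained by applying $f^*$ to the exact sequence first and taking dual second,'' which silently treats $f^*E_1\to f^*E_0\to f^*F\to 0$ as a length-one resolution; as you observe, this requires $f^*E_1\to f^*E_0$ to remain injective, i.e.\ $L_1f^*F=0$, and without that the identification fails. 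Indeed the proposition as literally stated is false: take $X=\Spec \Bbbk[t]$, $E_1=E_0=\mathcal{O}_X$ with the map given by multiplication by $t$, $F=\mathcal{O}_X/(t)$, and $f\colon\Spec\Bbbk\to X$ the inclusion of the origin; then $f^*(F^\e)\simeq\Bbbk$ while $(f^*F)^\e=\EExt^1_{\Spec\Bbbk}(\Bbbk,\Bbbk)=0$. Your diagnosis of why this does no damage downstream is also correct: every invocation is for the universal sequence on $\mathbb{P}^1_{Q_d}$ (or a relative analogue) with $f=\bar g$ the base change of some $g\colon Z\to Q_d$, and flatness of $\mathcal{F}_d$ over $Q_d$ gives exactly the vanishing of the relevant $\TTor_1$, so the pulled-back sequence is again a resolution --- the same mechanism the paper itself uses in the proof of Proposition \ref{FlatTorsion}. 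In short, your proof establishes the corrected statement (with the hypothesis $L_1f^*F=0$, or with $f$ flat, or with $F$ flat over a base through which $f$ is a base change), which is what the paper actually needs; the remaining cokernel bookkeeping in your first two paragraphs coincides with the paper's diagram chase.
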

\begin{proof}
Since $E_0$ and $E_1$ are both locally free, we have canonical
identifications
\[
f^*(E_i^\vee) =(f^*E_i)^\vee,\quad i=0,1.
\]
Thus we obtain a commutative diagram
\[
\CD{ %
(f^*E_0)^\vee \ar[r]\ar@{=}[d] & (f^*E_1)^\vee\ar[r]\ar@{=}[d]& (f^*F)^\varepsilon \ar@{::}[d]\ar[r] & 0 \\
f^*(E_0^\vee)\ar[r] & f^*(E_1^\vee)\ar[r] & f^*(F^\varepsilon) \ar[r] & 0
}%
\]
where the first row is obtained by applying $f^*$ to the exact sequence first and taking dual
second, and the second row is obtained by taking dual of the original exact sequence first and
applying $f^*$ second.
So we have another canonical identification $f^*(F^\varepsilon)=(f^*F)^\varepsilon$ because they
are both quotients of $f^*E_0^\vee$ by the image of $f^*E_1^\vee$.
\end{proof}
\begin{rem}
For any locally free sheaf $E$, since we have a canonical identification $f^*(E^\vee)=(f^*E)^\vee$, we will write $f^*E^\vee$ for both. In the same manner, for any coherent sheaf $F$, when we have a canonical identification $f^*(F^\e)=(f^*F)^\e$, we will write $f^*F^\e$ for both.
\end{rem}

\begin{prop}\label{FlatTorsion}
Suppose $X$ is a noetherian $\Bbbk$-scheme, and $\mathcal{T}$ is a torsion coherent sheaf on $\mathbb{P}^1_X$, flat over $X$ with
relative degree $d$. Then
\begin{enumerate}
\item $\mathcal{T}^\varepsilon$ is also torsion and flat over $X$ with relative degree $d$;
\item we have a canonical isomorphism $\mathcal{T}^{\varepsilon\varepsilon}= \mathcal{T}$.
\end{enumerate}
\end{prop}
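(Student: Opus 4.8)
The plan is to reduce every assertion to a single length-one locally free resolution of $\mathcal{T}$ and then argue by formal homological algebra together with Riemann--Roch on the fibres; the isomorphism in (2) will come out as the canonical biduality map, so it will glue automatically. All the assertions are local on $X$, so I may assume $X=\Spec A$ and, after possibly shrinking $X$, that $\pi_*\mathcal{T}(N)$ is free of rank $d$ for $N\gg 0$: it is locally free of rank $d$ by cohomology and base change, because $\mathcal{T}$ is flat over $X$ and each fibre $\mathcal{T}_x$ is torsion of degree $d$, so $H^1(\mathbb{P}^1_x,\mathcal{T}_x(N))=0$ and $\dim H^0(\mathbb{P}^1_x,\mathcal{T}_x(N))=d$. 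This gives a surjection $E_0:=\mathcal{O}_{\mathbb{P}^1_X}(-N)^{\oplus d}\surj\mathcal{T}$; put $\mathcal{E}:=\ker$. Flatness of $\mathcal{T}$ over $X$ forces $\mathcal{E}$ to be flat over $X$ (via the long exact $\TTor$-sequence) and forces $0\to\mathcal{E}\to E_0\to\mathcal{T}\to 0$ to remain exact after any base change $Y\to X$; in particular its restriction $0\to\mathcal{E}_x\to(E_0)_x\to\mathcal{T}_x\to 0$ to a fibre $\mathbb{P}^1_x$ is exact, and since $\mathcal{T}_x$ is torsion on the smooth curve $\mathbb{P}^1_x$ the subsheaf $\mathcal{E}_x\subset(E_0)_x$ is a vector bundle of rank $d$. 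So $\mathcal{E}$ is coherent, flat over $X$, and fibrewise locally free of constant rank $d$; a Nakayama argument (lift a fibre basis and use flatness of $\mathcal{E}$ over $X$ on both the kernel and the cokernel of the lifted map) shows $\mathcal{E}$ is locally free of rank $d$ on $\mathbb{P}^1_X$. Thus $0\to\mathcal{E}\xrightarrow{\iota}E_0\to\mathcal{T}\to 0$ is a length-one locally free resolution which is moreover preserved by all base changes.

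Applying $\HHom_{\mathbb{P}^1_X}(-,\mathcal{O})$ and using $\EExt^{\ge 1}_{\mathbb{P}^1_X}(E_0,\mathcal{O})=\EExt^{\ge 1}_{\mathbb{P}^1_X}(\mathcal{E},\mathcal{O})=0$, I get an exact sequence
\[
0\to\mathcal{T}^\vee\to E_0^\vee\xrightarrow{\ \iota^\vee\ }\mathcal{E}^\vee\to\mathcal{T}^\varepsilon\to 0
\]
with $E_0^\vee,\mathcal{E}^\vee$ locally free of the same rank $d$. The key observation is that $\iota^\vee$ restricts to an \emph{injection} on every fibre $\mathbb{P}^1_x$: by Proposition \ref{Pullback-Ext} this restriction equals $(\iota_x)^\vee$, the dual of the inclusion $\mathcal{E}_x\subset(E_0)_x$, whose cokernel $\mathcal{T}_x$ is torsion on a smooth curve, so $(\iota_x)^\vee$ is injective because its kernel is $\HHom_{\mathbb{P}^1_x}(\mathcal{T}_x,\mathcal{O})=0$. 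Since $E_0^\vee$ and $\mathcal{E}^\vee$ are flat over $X$ and $E_0^\vee|_{\mathbb{P}^1_x}\to\mathcal{E}^\vee|_{\mathbb{P}^1_x}$ is injective for all $x$, the local criterion for flatness shows $\mathcal{T}^\varepsilon$ is flat over $X$; flatness of $\mathcal{T}^\varepsilon$ then makes $0\to\Img\iota^\vee\to\mathcal{E}^\vee\to\mathcal{T}^\varepsilon\to 0$ base-change exact, and a short fibrewise diagram chase forces $\mathcal{T}^\vee=\ker\iota^\vee=0$. Hence $0\to E_0^\vee\xrightarrow{\iota^\vee}\mathcal{E}^\vee\to\mathcal{T}^\varepsilon\to 0$ is itself a length-one locally free resolution of exactly the same type. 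Being the cokernel of an injection of two bundles of equal rank, $\mathcal{T}^\varepsilon$ is torsion; by Proposition \ref{Pullback-Ext} $(\mathcal{T}^\varepsilon)_x=(\mathcal{T}_x)^\varepsilon$, and Riemann--Roch on $\mathbb{P}^1_x$ applied to its length-one locally free resolution gives $\deg(\mathcal{T}_x)^\varepsilon=\deg\mathcal{T}_x=d$. This proves (1).

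For (2) I iterate the construction: applying the previous paragraph to the resolution $0\to E_0^\vee\xrightarrow{\iota^\vee}\mathcal{E}^\vee\to\mathcal{T}^\varepsilon\to 0$ shows $(\mathcal{T}^\varepsilon)^\vee=0$ and identifies $\mathcal{T}^{\varepsilon\varepsilon}$ with $\Coker\!\bigl((\mathcal{E}^\vee)^\vee\xrightarrow{(\iota^\vee)^\vee}(E_0^\vee)^\vee\bigr)$. Under the canonical identifications $(\mathcal{E}^\vee)^\vee=\mathcal{E}$ and $(E_0^\vee)^\vee=E_0$ (both are locally free) the map $(\iota^\vee)^\vee$ becomes $\iota$, whence $\mathcal{T}^{\varepsilon\varepsilon}=\Coker(\iota)=\mathcal{T}$. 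This identification is precisely the natural biduality morphism $\mathcal{T}\to\mathcal{T}^{\varepsilon\varepsilon}$, which is independent of the resolution chosen (any two length-one locally free resolutions are linked by a chain map unique up to homotopy), so it glues over an affine cover of $X$ into a canonical isomorphism $\mathcal{T}^{\varepsilon\varepsilon}=\mathcal{T}$.

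I expect the real obstacle to be the structural claim that $\mathcal{E}$ is locally free on the whole of $\mathbb{P}^1_X$, and --- closely tied to it --- keeping the two senses of ``fibrewise'' apart: restriction to a fibre $\mathbb{P}^1_x$ over a point of $X$ versus the stalk-fibre at a point of $\mathbb{P}^1_X$. It is the first, weaker notion that $\iota^\vee$ satisfies and that is preserved by base change, and it is exactly what lets $\mathcal{T}^\varepsilon$ be torsion while the dualized complex is still a resolution; conflating it with the second notion would wrongly force $\mathcal{T}^\varepsilon$ to be locally free. Beyond this point the only genuinely geometric input is the degree count, which is immediate from Riemann--Roch on $\mathbb{P}^1$.
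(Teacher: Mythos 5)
Your proof is correct and follows essentially the same route as the paper's: choose a length-one locally free resolution of $\mathcal{T}$, dualize it, verify that the dualized map is injective on every fibre $\mathbb{P}^1_x$ (via $\HHom(\mathcal{T}_x,\mathcal{O})=0$) to obtain flatness of $\mathcal{T}^\e$ and the degree count, and deduce (2) from the biduality diagram of the same resolution. The only substantive difference is that you carefully justify why the kernel of your chosen surjection is locally free on $\mathbb{P}^1_X$ (flatness of the kernel, fibrewise local freeness, Nakayama), a step the paper's proof simply asserts.
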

\begin{proof}
Write $\mathcal{T}$ as a quotient of a locally free sheaf $\mathcal{E}_0$ and let $\mathcal{E}_1$ be the kernel:
\begin{equation}\label{SES1}
0\to \mathcal{E}_1\to \mathcal{E}_0\to \mathcal{T}\to 0.
\end{equation}
Then $\mathcal{E}_1$ is also locally free. Since $\rank\mathcal{T}=0$, we have $\rank\mathcal{E}_0=\rank\mathcal{E}_1$. Suppose $\mathcal{E}_i$ is of relative degree $d_i$ over $X$, $i=0,1$. Then $d=d_0-d_1$.

(1) Dualizing the exact sequence (\ref{SES1}), we obtain another exact sequence
\begin{equation}\label{SES2}
0\to \mathcal{E}_0^\vee \to \mathcal{E}_1^\vee \to \mathcal{T}^\varepsilon \to 0
\end{equation}
To show that $\mathcal{T}^\varepsilon$ is flat over $X$, we only need to show, by Lemma 2.1.4 in \cite{Huy}, that for every point $x\in X$, the map $(\mathcal{E}_0)^\vee_x\to
(\mathcal{E}_1)^\vee_x$ obtained by restricting the injective map $\mathcal{E}_0^\vee \to
\mathcal{E}_1^\vee$ to the fiber $\mathbb{P}^1_x$ is injective. We first restrict the exact sequence
(\ref{SES1}) to the fiber $\mathbb{P}^1_x$. Since $\mathcal{T}$ is flat, we obtain an exact sequence
\[
0\to (\mathcal{E}_1)_x \to (\mathcal{E}_0)_x \to \mathcal{T}_x \to 0
\]
Taking dual:
\[
0=(\mathcal{T}_x)^\vee\to (\mathcal{E}_0)_x^\vee \to (\mathcal{E}_1)_x^\vee \to \mathcal{T}_x^\varepsilon \to 0
\]
since $\mathcal{T}_x$ is torsion. So $(\mathcal{E}_0)^\vee_x\to
(\mathcal{E}_1)^\vee_x$ is injective for all $x\in X$. Hence $\mathcal{T}^\varepsilon$ is flat over
$X$. Since $\mathcal{E}_i^\vee$ are both locally free of the same rank and has relative degree $-d_i$, $i=0,1$, $\mathcal{T}^\e$ is torsion on $\mathbb{P}^1_X$ with relative
degree $(-d_1)-(-d_0)=d$ over $X$.

(2) We have a commutative diagram
\[
\CD{ %
0\ar[r] & \mathcal{E}_1 \ar[r]\ar@{=}[d] & \mathcal{E}_0 \ar[r]\ar@{=}[d] & \mathcal{T} \ar[r]
\ar@{::}[d] & 0 \\
0\ar[r] & \mathcal{E}_1^{\vee\vee}\ar[r] & \mathcal{E}_0^{\vee\vee}\ar[r] & \mathcal{T}^{\e\e} \ar[r] &
0
}%
\]
where the second row is an exact sequence obtained by dualizing the exact sequence (\ref{SES2}).
Thus we see that there is a canonical isomorphism $\mathcal{T}^{\e\e}= \mathcal{T}$.
\end{proof}

Lastly, we review a slightly extended result of Mumford on flattening stratifications, as stated in the following proposition. Parts (1)-(3) are due to Mumford, and part (4) is added for the purpose of this paper.
\begin{prop}\label{p8}
Let $X$ be a noetherian scheme, $F$ a coherent sheaf on $X$, and $R=\{\dim_{\kappa(x)}F|_x$ $|\,
x\in X\}$. For each $r\in R$, define $C_r:=\{x\in X\,|\,\dim_{\kappa(x)}F|_x=r\}$. Then
\begin{enumerate}
\item $R$ is a finite set. The sets $C_r$ are disjoint and their union is $|X|$.

\item For each $x\in C_r$, there is an affine open set $U_x\ni x$ together with an exact sequence of sheaves on $U_x$
\[
\mathcal{O}_{U_x}^s\stackrel{\psi}{\to} \mathcal{O}_{U_x}^r\stackrel{\varphi}{\to} F|_{U_x}\to 0
\]
such that the ideal $J_{r,U_x}$ generated by all entries of $\psi$ defines a subscheme $Y_{r,U_x}$ of $U_x$ whose support is exactly $C_r\cap U_x$. The subschemes $Y_{r,U_x}$ for all $x\in C_r$ patch together to form a subscheme of $X$, whose support is exactly $C_r$. Denote this subscheme by $X_r$.

\item The subschemes $X_r$ satisfy the following universal property: for any
morphism $f: Y\to X$ of noetherian schemes, the pullback $f^*F$ is locally free over $Y$ of rank
$r$ if and only if $f$ factors through the inclusion $X_r\embed X$. In particular, $F|_{X_r}$ is locally free of rank $r$
for each $r\in R$.

\item Assume in addition that $E_1\stackrel{\rho}{\to} E_0\to F\to 0$ is a locally free presentation of $F$
with $\rank E_0=l$. For $j\geq 1$, let $I_j$ be the sheaf image of the homomorphism
$\HHom(\bigwedge^{j}E_1,$ $\bigwedge^{j}E_0)^\vee \to \mathcal{O}_X$ induced by $\bigwedge^{j}\rho:
\bigwedge^{j}E_1\to \bigwedge^{j}E_0$, and let $Z_{j}$ be the closed subscheme defined by the sheaf
of ideals $I_j$. Then $Z_j\subset Z_{j+1}$ and $Z_{j+1}\setminus Z_{j}=X_{l-j}$ as schemes for
$j\geq 1$.
\end{enumerate}
\end{prop}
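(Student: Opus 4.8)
The plan is to reduce the statement to the classical local theory of determinantal ideals and then to match the two scheme structures by putting $\rho$ into a block normal form near each relevant point. First note that $I_j$ is, locally, the ideal of $j\times j$ minors of $\rho$: trivializing $E_0$ and $E_1$ over an affine open, the map $\HHom(\bigwedge^{j}E_1,\bigwedge^{j}E_0)^\vee\to\mathcal O_X$ has image the ideal generated by the entries of $\bigwedge^{j}\rho$ in the induced bases, i.e.\ by the $j\times j$ minors of a matrix representing $\rho$. From this the inclusion $I_{j+1}\subseteq I_j$, hence $Z_j\subseteq Z_{j+1}$, follows immediately by expanding an $(j+1)\times(j+1)$ minor along one row. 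On the level of points, $x\in Z_j$ iff every $j\times j$ minor of $\rho$ vanishes at $x$, i.e.\ iff $\rank(\rho\otimes\kappa(x))\le j-1$; hence $x\in Z_{j+1}\setminus Z_j$ iff $\rank(\rho\otimes\kappa(x))=j$, and then $\dim_{\kappa(x)}F|_x=\rank E_0-\rank(\rho\otimes\kappa(x))=l-j$, so $x\in C_{l-j}$. In particular the two sides of the claimed equality have the same support, $X\setminus Z_j=\{x:\dim_{\kappa(x)}F|_x\le l-j\}$, and $X\setminus Z_{j+1}=\{x:\dim_{\kappa(x)}F|_x\le l-j-1\}$.

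The main step is the scheme-theoretic equality $Z_{j+1}\setminus Z_j=X_{l-j}$ over the open set $X\setminus Z_j$. Fix $x$ with $\rank(\rho\otimes\kappa(x))=j$ and choose an affine open $U\ni x$ on which $E_0$ and $E_1$ are free and some chosen $j\times j$ minor of $\rho$ is a unit. Using invertible row and column operations over $U$, bring the matrix of $\rho$ to the block form $\left(\begin{smallmatrix}\Id&0\\0&\rho'\end{smallmatrix}\right)$ with a $j\times j$ identity block; since the operations are isomorphisms over $U$, this matrix still has rank $j$ at $x$, so every entry of $\rho'$ lies in $\mathfrak m_x$. A direct expansion of the $(j+1)\times(j+1)$ minors of a block matrix of this shape shows that they generate exactly the ideal generated by the entries of $\rho'$; hence $Z_{j+1}\cap U=V(\text{ideal of entries of }\rho')$. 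On the other hand, writing $E_1|_U\cong\mathcal O_U^{\,s'}$, one has $F|_U\cong\Coker\big(\rho'\colon\mathcal O_U^{\,s'-j}\to\mathcal O_U^{\,l-j}\big)$, and since $l-j=\dim_{\kappa(x)}F|_x$ this is precisely a presentation of the form used in part (2) to define the stratum; therefore Mumford's subscheme $X_{l-j}$ may be taken on $U$ to be $V(\text{ideal of entries of }\rho')$ as well. Since every point of $U$ satisfies $\dim_{\kappa(x)}F|_x\le l-j$, we have $U\subseteq X\setminus Z_j$. Covering $C_{l-j}$ by such opens $U$, and using that the local models of $X_{l-j}$ glue (part (2)) while $Z_{j+1}$ is defined globally, I conclude that $Z_{j+1}$ and $X_{l-j}$ agree as subschemes on a neighborhood of $C_{l-j}$ in $X\setminus Z_j$; on the complementary open set $X\setminus Z_{j+1}$ both subschemes are empty (there some $(j+1)\times(j+1)$ minor of $\rho$ is a unit, and no point of $C_{l-j}$ lies in it). Hence $Z_{j+1}\setminus Z_j=X_{l-j}$ as schemes.

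I expect the only genuine difficulty to be the bookkeeping: checking that the block normal form exists after shrinking $U$ (which rests on the invertibility of one $j\times j$ minor near $x$ and upper semicontinuity of fiber rank), and then assembling the local comparisons into a global one; the minor computation itself is elementary linear algebra over a local ring. A shorter, coordinate-free alternative would be to identify $I_j$ with the Fitting ideal $\mathrm{Fit}_{l-j}(F)$, invoke the base-change compatibility of Fitting ideals, and check that on $X\setminus Z_j$ the closed subscheme $V(I_{j+1})$ carries the universal property characterizing $X_{l-j}$ in part (3): a morphism $f\colon Y\to X\setminus Z_j$ factors through $V(I_{j+1})$ iff $\mathrm{Fit}_{l-j-1}(f^*F)=0$, which — together with $\mathrm{Fit}_{l-j}(f^*F)=\mathcal O_Y$, automatic because $f$ avoids $Z_j$ — forces $f^*F$ to have constant fiber dimension $l-j$ and then to be locally free of that rank; uniqueness of the representing subscheme then gives $V(I_{j+1})=X_{l-j}$ over $X\setminus Z_j$.
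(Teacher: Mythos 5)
Your argument is correct and takes essentially the same route as the paper: only part (4) requires proof (parts (1)--(3) are quoted from Mumford), and both proofs reduce to identifying, on a small affine neighborhood of a point of $C_{l-j}$ contained in $X\setminus Z_j$, the ideal of $(j+1)\times(j+1)$ minors of $\rho$ with the ideal of entries of the rank-$(l-j)$ presentation from part (2), i.e.\ with the Fitting ideal $\mathrm{Fit}_{l-j-1}(F)$. The only difference is that the paper cites the Fitting ideal lemma for this local identification, whereas you prove the needed instance by hand via the block normal form $\left(\begin{smallmatrix}\Id&0\\0&\rho'\end{smallmatrix}\right)$ and a direct minor computation; the ``coordinate-free alternative'' you sketch at the end is precisely the paper's argument.
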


\begin{proof}
(1), (2) and (3) are proved in \cite{Mumford}, Lecture 8, pg 55, Case $1^\circ$. We now prove (4). Suppose $F$ has a locally free presentation $E_1\stackrel{\rho}{\to}E_0\to F\to 0$ on $X$.
Then we have $|Z_j|=\{x\in X\,|\,\dim_{\kappa(x)}F|_x\geq l-j+1\}$, which obviously satisfy
$|Z_j|\subset |Z_{j+1}|$ and $|Z_{j+1}|\setminus |Z_j|=|X_{l-j}|$. At any point $x\in X_{l-j}$, there
exists an affine open neighborhood $V_x$ of $x$, possibly smaller than $U_x$, such that
\[
E_1|_{V_x}\stackrel{\rho}{\to} E_0|_{V_x}\to F|_{V_x}\to 0
\]
is a free presentation of $F$ on $V_x$. Thus the ideal $I_{j+1}(V_x)$, which is generated by all
$(j+1)\times (j+1)$ minors of $\rho$, is the same as the ideal $J_{x}$ on $V_x$, by the Fitting
ideal lemma. So $(Z_{j+1}\setminus Z_j)\cap V_x=X_{l-j}\cap V_x$ as schemes for all $x\in X_{l-j}$.
Therefore $Z_{j+1}\setminus Z_j=X_{l-j}$ as schemes.
\end{proof}

Before we proceed to the next section, let us mention a few more conventions. Let $E$ and $F$ be two locally free sheaves on $X$. We say $E$ is a \emph{subbundle} of $F$, if there is an injective homomorphism $E\to F$ of sheaves and the quotient $F/E$ is locally free. In this case, we say the homomorphism $E\to F$ is a \emph{subbundle map}.

For any locally free sheaf $E$ on a scheme $X$, the \emph{projective bundle} on $X$ associated to $E$ is
\[
\mathbb{P}(E):=\PProj\Sym (E^\vee).
\]
It is equipped with the universal quotient line bundle $f^*E^\vee\surj \mathcal{O}_{\mathbb{P}(E)}(1)$, where $f:\mathbb{P}(E)\to X$ is the structure morphism. Its dual is the universal subline bundle $\mathcal{O}_{\mathbb{P}(E)}(-1)\to f^*E$. For any coherent sheaf $F$ on $\mathbb{P}(E)$, we write $F(m):=F\otimes \mathcal{O}_{\mathbb{P}(E)}(m)$.

\section{The Quot-scheme compactification and its boundary}
From now on, we work over an algebraically closed field $\Bbbk$. Let $V$ be a vector space of dimension $n$ over $\Bbbk$, and $k<n$ a fixed positive integer. The Grassmannian $\Gr:=\Gr(k,V)$ of $k$-dimensional subspaces of $V$ comes equipped with a universal quotient $V_{\Gr}\surj W$ of locally free sheaves on $\Gr$. The \emph{degree} of a morphism $f:\mathbb{P}^1\to \Gr$ is equal to $\deg f^*W$, which coincides with the usual degree of the map $\mathbb{P}^1\to \mathbb{P}(\bigwedge^k V)$ obtained by composing $f$ with the Pl\"{u}cker embedding $\Gr\embed \mathbb{P}(\bigwedge^k V)$. For $d\geq 0$, let $\Mor_d(\mathbb{P}^1,\Gr)$ denote the set of all degree $d$ morphisms from $\mathbb{P}^1$ to $\Gr$. The set $\Mor_d(\mathbb{P}^1,\Gr)$ has a natural structure of a (quasi-projective) variety, which can be realized as an open subscheme of the Hilbert scheme of $\mathbb{P}^1_\Gr$ corresponding to graphs.

There is a one-to-one correspondence between the set $\Mor_d(\mathbb{P}^1,\Gr)$ and the set of all equivalence classes of quotients $V_{\mathbb{P}^1}\surj F$ where $F$ is locally free, which is set up by associating each degree $d$ morphism to the pullback of the universal quotient on $\Gr$. This identifies the space $\Mor_d(\mathbb{P}^1,\Gr)$ with the open locus of locally free quotients in the Quot scheme $\Quot^{n-k,d}_{V_{\mathbb{P}^1}/\mathbb{P}^1/\Bbbk}$. This Quot scheme is an irreducible nonsingular projective variety of dimension $nd+k(n-k)$ \cite{Str87}, hence it provides a smooth compactification of $\Mor_d(\mathbb{P}^1,\Gr)$. For short notations, we put
\[
Q_d:=\Quot^{n-k,d}_{V_{\mathbb{P}^1}/\mathbb{P}^1/\Bbbk},\quad \mathring{Q}_d:=\Mor_d(\mathbb{P}^1,\Gr)\subset Q_d
\]
The Quot scheme $Q_d$ is equipped with a universal short exact sequence on $\mathbb{P}^1_{Q_d}$:
\begin{equation}\label{UES-Qd}
0\to \mathcal{E}_d\to V_{\mathbb{P}^1_{Q_d}}\to \mathcal{F}_d \to 0
\end{equation}
where $\mathcal{F}_d$ is flat over $Q_d$ of rank $n-k$ and relative degree $d$, and $\mathcal{E}_d$ is locally free of rank $k$ and relative degree $-d$.

For any point $q\in Q_d$, the pullback of the universal exact sequence to $\mathbb{P}^1_q$ is also an exact sequence:
\[
0\to \mathcal{E}_{d,q}\to V_{\mathbb{P}^1_q}\to \mathcal{F}_{d,q}\to 0
\]
The boundary $Q_d\setminus \mathring{Q}_d$ consists of the points $q$ with $\mathcal{F}_{d,q}$ having torsion. For any coherent sheaf $F$, we denote by $F_\tor$ the torsion submodule of $F$. We define a chain of closed subsets
\[
C_{d,0}\subset C_{d,1}\subset \cdots \subset C_{d,d-1}=Q_d\setminus\mathring{Q}_d
\]
by setting $C_{d,r}=\{q\in Q_d\,|\,\deg (\mathcal{F}_{d,q})_\tor\geq d-r\}.$

The degree of the torsion part $(\mathcal{F}_{d,q})_{\tor}$ of $\mathcal{F}_{d,q}$ can be determined by the rank of certain linear maps. Fix a point $q\in Q_d$ and set $E:=\mathcal{E}_{d,q}$, $F:=\mathcal{F}_{d,q}$, $T:=(\mathcal{F}_{d,q})_\tor$, and $V:=V_{\mathbb{P}^1_q}$. For $r\geq 0$, let
\[
\rho_{q,r}: \Hom(V,\mathcal{O}(r))\to \Hom(E,\mathcal{O}(r))
\]
be the map obtained by applying $\Hom(-,\mathcal{O}(r))$ to $E\to V$. Here
$\mathcal{O}(r):=\mathcal{O}_{\mathbb{P}^1_q}(r)$. Then we have
\begin{prop}\label{p1}
\[
\rank \rho_{q,r}\left\{\begin{array}{ll}
=k(r+1)+d-\deg T,& \text{ if }\ r\geq d-\deg T-1,\\
\geq (k+1)(r+1),& \text{ if }\ 0\leq r\leq d-\deg T-1.
\end{array}\right.
\]
\end{prop}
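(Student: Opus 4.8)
The plan is to reduce the claim to a cohomology computation on the curve $\mathbb{P}^1_q$. Since both the rank of a linear map and the length of a torsion sheaf are invariant under extension of the ground field, we may assume $\kappa(q)$ is algebraically closed. Let $T=F_\tor$ and put $B:=F/T$; on the smooth curve $\mathbb{P}^1_q$ the sheaf $B$ is torsion-free, hence locally free of rank $n-k$, and it is a quotient of $F$, which is itself a quotient of the trivial bundle $V$, so $B$ is globally generated. By Grothendieck's splitting theorem $B\cong\bigoplus_{i=1}^{n-k}\mathcal{O}(a_i)$, and global generation forces every $a_i\geq 0$; since $\deg B=\deg F-\deg T$, we have $\sum_i a_i=e$, where I abbreviate $e:=d-\deg T\geq 0$.

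The next step is to identify $\ker\rho_{q,r}$. Applying the left-exact functor $\Hom(-,\mathcal{O}(r))$ to $0\to E\to V\to F\to 0$ gives $\ker\rho_{q,r}=\Hom(F,\mathcal{O}(r))$; moreover the image of any homomorphism from the torsion sheaf $T$ into the torsion-free sheaf $\mathcal{O}(r)$ is simultaneously torsion and torsion-free, hence zero, so $\Hom(F,\mathcal{O}(r))=\Hom(B,\mathcal{O}(r))=H^0(B^\vee(r))$. Since $\Hom(V,\mathcal{O}(r))=H^0(V^\vee(r))$ has dimension $n(r+1)$, this gives
\[
\rank\rho_{q,r}=n(r+1)-h^0(B^\vee(r)).
\]

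It then remains to evaluate $h^0(B^\vee(r))=\sum_i h^0(\mathcal{O}(r-a_i))=\sum_i\max(r+1-a_i,\,0)$. When $r\geq e-1$ we have $a_i\leq\sum_j a_j=e\leq r+1$ for all $i$, so every summand equals $r+1-a_i$ and $h^0(B^\vee(r))=(n-k)(r+1)-e$, whence $\rank\rho_{q,r}=k(r+1)+e=k(r+1)+d-\deg T$. When $0\leq r\leq e-1$, rewrite $\max(r+1-a_i,0)=(r+1)-\min(a_i,r+1)$, so that $h^0(B^\vee(r))=(n-k)(r+1)-\sum_i\min(a_i,r+1)$; and $\sum_i\min(a_i,r+1)\geq r+1$, because if every $a_i\leq r+1$ the sum is $e\geq r+1$, while otherwise a single term already equals $r+1$. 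Therefore $h^0(B^\vee(r))\leq(n-k-1)(r+1)$ and $\rank\rho_{q,r}\geq(k+1)(r+1)$.

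The Riemann--Roch arithmetic and the splitting of $B$ are routine; the steps deserving care are the two identifications $\ker\rho_{q,r}=\Hom(F,\mathcal{O}(r))=H^0(B^\vee(r))$ in the middle paragraph — where the torsion of $F$ enters through the vanishing $\Hom(T,\mathcal{O}(r))=0$ — together with the small inequality $\sum_i\min(a_i,r+1)\geq r+1$ in the second case. I expect the bookkeeping around that vanishing, and the reduction making $B$ globally generated, to be the one place worth spelling out in full.
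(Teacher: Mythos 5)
Your argument is correct and follows essentially the same route as the paper: identify $\ker\rho_{q,r}$ with $\Hom(F/T,\mathcal{O}(r))$, apply Grothendieck's splitting theorem to the globally generated locally free quotient $F/T=\bigoplus\mathcal{O}(a_i)$ with $a_i\geq 0$, and evaluate $\sum_i\max(r+1-a_i,0)$ in the two ranges of $r$. The only (cosmetic) differences are that you use the vanishing $\Hom(T,\mathcal{O}(r))=0$ via the exact sequence rather than the splitting $F\simeq T\oplus F/T$, and you replace the paper's two subcases in the range $0\leq r\leq d-\deg T-1$ by the uniform estimate $\sum_i\min(a_i,r+1)\geq r+1$.
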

\begin{proof}
We have an isomorphism $F\simeq T\oplus F_1$, where $F_1=F/T$ is a locally free sheaf
of rank $n-k$ and degree $d-\deg T$. Applying $\Hom(-,\mathcal{O}(r))$ to the exact sequence, one
gets an exact sequence
\[
0\to \Hom(F,\mathcal{O}(r))\to \Hom(V,\mathcal{O}(r))\stackrel{\rho_{_{q,r}}}{\to}
\Hom(E,\mathcal{O}(r))
\]
Thus
\[
\begin{split}
\rank \rho_{q,r}&=\dim \Hom(V,\mathcal{O}(r))-\dim \Hom(F,\mathcal{O}(r))\\
& =\dim H^0(V^\vee(r))-\dim \Hom(F_1,\mathcal{O}(r)) \\
& =n(r+1)-\dim H^0(F_1^\vee(r))
\end{split}
\]
We have an isomorphism $F_1\simeq\bigoplus_{i=1}^{n-k}\mathcal{O}(d_i)$, where $d_i\geq 0$ and
$\sum_{i=1}^{n-k}d_i=d-\deg T$. Hence
\[
H^0(F_1^\vee(r))\simeq H^0(\bigoplus_{i=1}^{n-k}\mathcal{O}(r-d_i))=\bigoplus_{i=1}^{n-k}H^0(\mathcal{O}(r-d_i))
\]
and
\[
\dim H^0(F_1^\vee(r))=\sum_{i=1}^{n-k}\max\{r-d_i+1,0\}.
\]
If $r\geq d-\deg T-1$, then $r-d_i+1\geq 0$ for all $i$, hence
\[
\dim H^0(F_1^\vee(r))=\sum_{i=1}^{n-k}(r-d_i+1)=(n-k)(r+1)+\sum_{i=1}^{n-k}d_i=(n-k)(r+1)-(d-\deg
T)
\]
Therefore
\[
\rank \rho_{q,r}=n(r+1)-(n-k)(r+1)+d-\deg T=k(r+1)+d-\deg T.
\]
If $0\leq r\leq d-\deg T-1$, then we have two cases:

(i) $r-d_i+1\geq 0$ for all $i$. Then
\[
\begin{split}
\dim H^0(F_1^\vee(r))&=\sum_{i=1}^{n-k}(r-d_i+1)=(n-k)(r+1)-(d-\deg T)\\
&\leq (n-k)(r+1)-(r+1)=(n-k-1)(r+1)
\end{split}
\]

(ii) $r-d_i+1<0$ for some $i$. Without loss of generality, we assume $r-d_1+1<0$. Then
\[
\dim H^0(F_1^\vee(r))=\sum_{i=2}^{n-k}\max\{r-d_i+1,0\}\leq \sum_{i=2}^{n-k}(r+1)=(n-k-1)(r+1).
\]
In either case, we have $\dim H^0(F_1^\vee(r))\leq (n-k-1)(r+1)$, so
\[
\rank \rho_{q,r}\geq n(r+1)-(n-k-1)(r+1)=(k+1)(r+1).
\]
\end{proof}

Note that the two ``if'' conditions in the proposition are not mutually exclusive, and this will lead to some interesting results.
\begin{cor}\label{c3}
If $\rank \rho_{q,i}\leq k(i+1)+i$ for some $i\geq 0$, then $\deg T\geq d-i$ and
\[
\rank \rho_{q,l}-\rank \rho_{q,i}=k(l-i),\quad \text{for all } l\geq i-1.
\]
\end{cor}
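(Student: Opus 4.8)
The plan is to derive the Corollary directly from Proposition~\ref{p1} by a short case analysis, exploiting the fact that the two ranges appearing in that proposition overlap at $r=d-\deg T-1$.

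First I would establish $\deg T\ge d-i$ by contradiction. Suppose instead that $\deg T\le d-i-1$, i.e.\ $i\le d-\deg T-1$. Then $i$ lies in the range of the \emph{second} alternative of Proposition~\ref{p1}, so $\rank\rho_{q,i}\ge (k+1)(i+1)=k(i+1)+(i+1)>k(i+1)+i$, which contradicts the hypothesis $\rank\rho_{q,i}\le k(i+1)+i$. Hence $\deg T\ge d-i$, equivalently $d-\deg T-1\le i-1$.

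Now for any integer $l\ge i-1$ we have $l\ge d-\deg T-1$, so $l$ lies in the range of the \emph{first} alternative of Proposition~\ref{p1}, giving $\rank\rho_{q,l}=k(l+1)+d-\deg T$; in particular this holds for $l=i$ as well, since $i\ge i-1\ge d-\deg T-1$. Subtracting the two equalities yields $\rank\rho_{q,l}-\rank\rho_{q,i}=k(l+1)-k(i+1)=k(l-i)$, as asserted. The only edge case is $l=i-1=-1$, which can occur when $i=0$; here one uses the convention $\rho_{q,-1}=0$, forced by $\Hom(V,\mathcal{O}(-1))=H^0(\mathcal{O}_{\mathbb{P}^1_q}(-1)^{\oplus n})=0$, and the hypothesis then forces $\deg T=d$, so both sides of the claimed equality equal $-k$.

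I do not expect any serious obstacle: the Corollary is essentially a bookkeeping consequence of Proposition~\ref{p1}, and the only mild subtlety is applying the two cases of that proposition on the correct (overlapping) ranges and handling the $l=-1$ boundary convention.
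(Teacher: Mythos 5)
Your proof is correct and follows essentially the same route as the paper: rule out the second alternative of Proposition~\ref{p1} at $r=i$ to get $i\ge d-\deg T$, then apply the first alternative at both $l$ and $i$ and subtract. Your explicit handling of the $l=i-1=-1$ boundary (via $\Hom(V,\mathcal{O}(-1))=0$) is a small point the paper's proof passes over silently, but it does not change the argument.
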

\begin{proof}
Suppose $\rank\rho_{q,i}\leq k(i+1)+i<(k+1)(i+1)$ for some $i\geq 0$. By Proposition \ref{p1}, we must have $i\geq
d-\deg T$, hence $\deg T\geq d-i$. We now have $i-1\geq d-\deg T-1$. Thus for $l\geq i-1$, we obtain, by Proposition \ref{p1} again, that
\[
\rank\rho_{q,l}-\rank\rho_{q,i}=(k(l+1)+d-\deg T)-(k(i+1)+d-\deg T)=k(l-i).
\]
\end{proof}

\begin{prop}\label{c1}
\begin{enumerate}
\item If $\deg T=d-r$ (resp. $\geq d-r$), then $\rank\rho_{q,m}=k(m+1)+r$ (resp. $\leq
k(m+1)+r$) for all $m\geq r$.
\item If $\rank\rho_{q,m}=k(m+1)+r$ (resp. $\leq
k(m+1)+r$) for some $m\geq r$, then $\deg T=d-r$ (resp. $\geq d-r$).
\end{enumerate}
\end{prop}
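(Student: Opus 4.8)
The statement should follow almost immediately from Proposition \ref{p1}, so the plan is to reduce both parts to the dichotomy of that proposition, being careful about the boundary value $m=r$ and about the two ``resp.'' variants. Throughout, to avoid a clash with the index $r$ in the statement, I keep in mind that the twisting parameter in Proposition \ref{p1} is a free variable, here specialized to $m$.

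For part (1), I would first observe that the hypothesis $\deg T=d-r$ (resp.\ $\deg T\geq d-r$) is equivalent to $d-\deg T=r$ (resp.\ $d-\deg T\leq r$). Hence for every $m\geq r$ we have $m\geq r-1\geq d-\deg T-1$, so we are always in the first case of Proposition \ref{p1}; it gives $\rank\rho_{q,m}=k(m+1)+d-\deg T$, which equals $k(m+1)+r$ in the first situation and is $\leq k(m+1)+r$ in the second, as desired.

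For part (2), I would argue by cases on the dichotomy of Proposition \ref{p1} at the given $m$. If $0\leq m\leq d-\deg T-1$, then $\rank\rho_{q,m}\geq(k+1)(m+1)=k(m+1)+(m+1)$; combining this with the hypothesis $\rank\rho_{q,m}=k(m+1)+r$ (resp.\ $\leq k(m+1)+r$) and with $r\leq m$ forces $m+1\leq r\leq m$, a contradiction. Therefore $m\geq d-\deg T-1$, and the rank formula of Proposition \ref{p1} gives $\rank\rho_{q,m}=k(m+1)+d-\deg T$; comparing with the hypothesis yields $d-\deg T=r$ (resp.\ $d-\deg T\leq r$), i.e.\ $\deg T=d-r$ (resp.\ $\deg T\geq d-r$). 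Alternatively, one may invoke Corollary \ref{c3} to dispose of the small-$m$ case at once, since $\rank\rho_{q,m}\leq k(m+1)+r\leq k(m+1)+m$ already places us in the regime where $\deg T\geq d-m$ and the rank formula applies for all larger twists.

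I do not expect a genuine obstacle here: the argument uses no geometric input beyond Proposition \ref{p1}, and the only delicate points are tracking the directions of the inequalities and checking that everything still works at the boundary case $m=r$ (where $m\leq d-\deg T-1$ is equivalent to $r\leq d-\deg T-1$, which is incompatible with $\rank\rho_{q,m}\leq k(m+1)+r$).
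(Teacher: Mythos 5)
Your argument is correct, and it takes a slightly more direct route than the paper's. For part (1) you apply Proposition \ref{p1} at every twist $m\geq r$ at once, observing that $m\geq r\geq d-\deg T$ always places you in the first case of the dichotomy; the paper instead computes $\rank\rho_{q,r}$ at the single value $m=r$ via Proposition \ref{p1} and then propagates to all $m\geq r$ using Corollary \ref{c3}. For part (2) you rule out the regime $0\leq m\leq d-\deg T-1$ by the contradiction $m+1\leq r\leq m$ and then read off $d-\deg T$ from the rank formula at the given $m$; the paper descends step by step from $m$ to $r$ via Corollary \ref{c3} before concluding. Both arguments rest entirely on Proposition \ref{p1}; yours dispenses with Corollary \ref{c3} (your "alternative" remark at the end of part (2) is essentially the paper's route), which makes the proof shorter and keeps the case analysis localized at the single twist $m$, at the cost of having to check explicitly that the two overlapping cases of Proposition \ref{p1} are correctly disentangled --- which you do. One cosmetic remark: after excluding $m\leq d-\deg T-1$ you actually get the slightly stronger $m\geq d-\deg T$; stating the weaker $m\geq d-\deg T-1$ is harmless since that is exactly the hypothesis needed for the equality case of Proposition \ref{p1}.
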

\begin{proof}
(1) Suppose $\deg T=d-r$ (resp. $\geq d-r$). Then $r=d-\deg T$ (resp. $\geq d-\deg T$), and by Proposition
\ref{p1}, we have $\rank \rho_{q,r}=k(r+1)+d-\deg T=k(r+1)+r$ (resp. $\leq k(r+1)+r$). Then by Corollary \ref{c3}, for any $m\geq r$, $\rank
\rho_{q,m}=\rank\rho_{q,r}+k(m-r)=k(m+1)+r$ (resp. $\leq k(m+1)+r$).

(2) Suppose $\rank\rho_{q,m}=k(m+1)+r$ (resp. $\leq k(m+1)+r$) for some $m\geq r$. Then $\rank\rho_{q,m}\leq k(m+1)+m$. Let $m'=m-1$. By Corollary \ref{c3}, $\rank\rho_{q,m'}-\rank\rho_{q,m}=-k$. So $\rank\rho_{q,m'}=\rank\rho_{q,m}-k=k(m+1)+r-k=k(m'+1)+r$ (resp. $\leq k(m'+1)+r$). If $m'\geq r$, then we are back in the starting situation ($m'$ playing the role of $m$), so we can repeat this process, and eventually we must have $\rank\rho_{q,r}=k(r+1)+r$ (resp. $\leq k(r+1)+r$). By Corollary \ref{c3}, we have $\deg T\geq d-r$, or $r\geq d-\deg T$. Then by Proposition \ref{p1}, $d-\deg T=\rank\rho_{q,m}-k(r+1)=r$ (resp. $\leq r$), and hence $\deg T=d-r$ (resp. $\geq d-r$).
\end{proof}

For each integer $m$, we set $\widehat m:=k(m+1)$. Proposition \ref{c1} implies that for $m\geq r$, $q\in C_{d,r}\Longleftrightarrow $ the induced homomorphism of the exterior powers \[\bigwedge^{\widehat m+r+1}\rho_{q,m}:\bigwedge^{\widehat m+r+1}\Hom(V,\mathcal{O}(r))\to \bigwedge^{\widehat m+r+1}\Hom(E,\mathcal{O}(r))\] is a zero homomorphism. This suggests a way to endow each $C_{d,r}$ with a structure of determinantal subschemes. Denote by $\pi: \mathbb{P}^1_{Q_d}\to Q_d$ the projection map. Let
\[
\rho_{d,m}: \pi_*V_{\mathbb{P}^1_{Q_d}}^\vee(m)\to \pi_*\mathcal{E}_d^\vee(m)
\]
be the $\mathcal{O}_{Q_d}$-homomorphism obtained by applying $\HHom(-,\mathcal{O}_{\mathbb{P}^1_{Q_d}}(m))$ to the monomorphism $\mathcal{E}_d\to
V_{\mathbb{P}^1_{Q_d}}$ from the universal exact sequence first and $\pi_*$ second.

For each $l\geq 1$, the exterior power of $\rho_{d,m}$
\[
\bigwedge^{l}\rho_{d,m}: \bigwedge^{l}\pi_*V_{\mathbb{P}^1_{Q_d}}^\vee(m)\to
\bigwedge^{l}\pi_*\mathcal{E}_d^\vee(m)
\]
is a section of $\HHom\big(\bigwedge^{l}\pi_*V_{\mathbb{P}^1_{Q_d}}^\vee(m),
\bigwedge^{l}\pi_*\mathcal{E}_d^\vee(m)\big)$. By Proposition \ref{c1}, for $m\geq d$, $\bigwedge^{\widehat m+r+1}\rho_{d,m}$ is nowhere vanishing on $Q_d$ for $r\leq -1$, but vanishes precisely on $C_{d,r}$ for $r\geq 0$.

The section $\bigwedge^{\widehat m+r+1}\rho_{d,m}$ induces a homomorphism
\[
\HHom\bigg(\bigwedge^{\widehat m+r+1}\pi_*V_{\mathbb{P}^1_{Q_d}}^\vee(m), \bigwedge^{\widehat
m+r+1}\pi_*\mathcal{E}_d^\vee(m)\bigg)^\vee\to \mathcal{O}_{Q_d}.
\]
The image, which we denote by $I_{d,r,m}$, is an ideal sheaf. For $m\geq d$, each $I_{d,r,m}$ defines a subscheme of $Q_d$ whose support is $C_{d,r}$. A natural question is: is the scheme structure independent of $m$? We will show that the answer is yes if $m\gg 0$.

Dualizing the universal exact sequence (\ref{UES-Qd}), one obtains a long exact sequence
\begin{equation}\label{a2}
0 \to \mathcal{F}_d^\vee \to V^\vee_{\mathbb{P}^1_{Q_d}} \to \mathcal{E}_d^\vee \to \mathcal{F}_d^\varepsilon \to 0
\end{equation}
We need to study the sheaf $\mathcal{F}_d^\e$. By Proposition \ref{Pullback-Ext}, for any morphism $f: X\to \mathbb{P}^1_{Q_d}$, we can write $f^*\mathcal{F}_d^\e$ for both $f^*(\mathcal{F}_d^\e)$ and $(f^*\mathcal{F}_d)^\e$. In particular, for any subscheme $Z\subset Q_d$, we write $(\mathcal{F}_{d})_Z^\e$ for $(\mathcal{F}_d^\e)_Z$ and $((\mathcal{F}_d)_Z)^\e$.
Some facts are quick: $(\mathcal{F}_d)^\e_{\mathring{Q}_d}=((\mathcal{F}_d)_{\mathring{Q}_d})^\e=0$ since $(\mathcal{F}_d)_{\mathring{Q}_d}$ is locally free, and for any point $q\in Q_d$, $\mathcal{F}_{d,q}^\e\simeq ((\mathcal{F}_{d,q})_\tor)^\e$ since $\mathcal{F}_{d,q}$ splits as a direct sum of its torsion part and locally free part. Then by Proposition \ref{FlatTorsion}, we see that $\mathcal{F}_{d,q}^\e$ is torsion and $\deg \mathcal{F}_{d,q}^\e=\deg (\mathcal{F}_{d,q})_\tor$. Thus $\mathcal{F}_d^\e$ is a torsion sheaf and $\Supp\mathcal{F}_d^\e\subset\mathbb{P}^1_{Q_d}\setminus \mathbb{P}^1_{\mathring{Q}_d}$.
\begin{lemma}
For any closed  point $q\in Q_d$, there is an open neighborhood $U$ of $q$ such that
$\mathcal{F}_d^\varepsilon|_{\mathbb{P}^1_U}\simeq
\mathcal{F}_d^\varepsilon(m)|_{\mathbb{P}^1_U}$ for all $m\geq 0$.
\end{lemma}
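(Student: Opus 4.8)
The plan is to use that $\mathcal{F}_d^\varepsilon$ is a torsion sheaf whose support avoids $\mathbb{P}^1_{\mathring{Q}_d}$, together with the fact that $\mathcal{O}_{\mathbb{P}^1}(m)$ becomes trivial once restricted to an affine chart of $\mathbb{P}^1$. Set $Y:=\Supp\mathcal{F}_d^\varepsilon$, a closed subset of $\mathbb{P}^1_{Q_d}$. By Proposition \ref{Pullback-Ext} the restriction of $\mathcal{F}_d^\varepsilon$ to the fiber $\mathbb{P}^1_q$ is $\mathcal{F}_{d,q}^\varepsilon$, which, as observed above, is torsion on $\mathbb{P}^1_q$; hence, by Nakayama's lemma, $Y\cap\mathbb{P}^1_q=\Supp\mathcal{F}_{d,q}^\varepsilon$ is finite. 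Since $\Bbbk$ is algebraically closed and $q$ is a closed point, $\mathbb{P}^1_q$ has infinitely many $\Bbbk$-points, so we may choose a $\Bbbk$-point $\infty\in\mathbb{P}^1_q$, regarded as a point of $\mathbb{P}^1$, with $(\infty,q)\notin Y$. The closed subscheme $\{\infty\}\times Q_d$ of $\mathbb{P}^1_{Q_d}$ then meets $Y$ in a closed set whose image in $Q_d$ misses $q$; let $U$ be the complement of that image, an open neighborhood of $q$.

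Over $\mathbb{P}^1_U$ the support $Y\cap\mathbb{P}^1_U$ is, by construction, disjoint from $\{\infty\}\times U$, hence contained in the affine chart $(\mathbb{P}^1\setminus\{\infty\})\times U$. Let $\iota\colon Z\hookrightarrow\mathbb{P}^1_U$ be the scheme-theoretic support of $G:=\mathcal{F}_d^\varepsilon|_{\mathbb{P}^1_U}$, so that $G=\iota_*\mathcal{G}$ for some coherent $\mathcal{O}_Z$-module $\mathcal{G}$; since $|Z|=Y\cap\mathbb{P}^1_U$ lies in $(\mathbb{P}^1\setminus\{\infty\})\times U$, the closed immersion $\iota$ factors through this affine chart, and so the composite $Z\xrightarrow{\ \iota\ }\mathbb{P}^1_U\xrightarrow{\ p\ }\mathbb{P}^1$ factors through the affine open $\mathbb{P}^1\setminus\{\infty\}$.

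Now $\mathcal{O}_{\mathbb{P}^1}(m)$ is trivial on $\mathbb{P}^1\setminus\{\infty\}$, so its pullback to $Z$ along $p\circ\iota$ is trivial, and the projection formula gives
\[
\mathcal{F}_d^\varepsilon(m)|_{\mathbb{P}^1_U}=G\otimes p^*\mathcal{O}_{\mathbb{P}^1}(m)=\iota_*\big(\mathcal{G}\otimes(p\circ\iota)^*\mathcal{O}_{\mathbb{P}^1}(m)\big)\cong\iota_*\mathcal{G}=\mathcal{F}_d^\varepsilon|_{\mathbb{P}^1_U}
\]
for every $m$, in particular for all $m\geq 0$, which is the assertion.

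The delicate points are only bookkeeping: that forming $\mathcal{F}_d^\varepsilon$ commutes with restriction to the fiber $\mathbb{P}^1_q$ (Proposition \ref{Pullback-Ext}) and with passing to support (Nakayama), and that a coherent sheaf, along with all its twists, is recovered from its scheme-theoretic support — this last point is what lets triviality of the twist on the affine chart upgrade to a genuine isomorphism on all of $\mathbb{P}^1_U$, not merely on $(\mathbb{P}^1\setminus\{\infty\})\times U$. Beyond these there is no real obstacle; the substance of the lemma is just that, locally over $Q_d$, the torsion sheaf $\mathcal{F}_d^\varepsilon$ can be pushed off the section $\{\infty\}\times Q_d$, after which twisting by $\mathcal{O}_{\mathbb{P}^1}(m)$ changes nothing.
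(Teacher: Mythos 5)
Your proof is correct and follows essentially the same route as the paper: the open set $U$ is constructed identically, by choosing a point $\infty\in\mathbb{P}^1$ with $(\infty,q)\notin\Supp\mathcal{F}_d^\varepsilon$ (possible because $\mathcal{F}_{d,q}^\varepsilon$ is torsion, hence finitely supported on $\mathbb{P}^1_q$) and deleting from $Q_d$ the closed image of $\Supp\mathcal{F}_d^\varepsilon\cap(\{\infty\}\times Q_d)$. The only difference is the concluding step: where the paper tensors $\mathcal{F}_d^\varepsilon|_{\mathbb{P}^1_U}$ with $0\to\mathcal{O}_{\mathbb{P}^1}\to\mathcal{O}_{\mathbb{P}^1}(m)\to T\to 0$ and kills the $\TTor_1$ and cokernel terms by disjointness of supports, you factor the sheaf through its scheme-theoretic support inside the affine chart $(\mathbb{P}^1\setminus\{\infty\})\times U$, where $\mathcal{O}_{\mathbb{P}^1}(m)$ pulls back trivially, and invoke the projection formula --- an equally valid one-line finish.
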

\begin{proof}
Let $q\in Q_d$ be a closed point and $i_q:\mathbb{P}^1\to \mathbb{P}^1_{Q_d}$ the inclusion map defined by $i_q(t)=(t,q)$. So the image of $i_q$ is exactly $\mathbb{P}^1_q$. Since the sheaf $\mathcal{F}_{d,q}^\varepsilon$ is torsion on $\mathbb{P}^1_q$, the support of
$\mathcal{F}_{d,q}^\varepsilon$ consists of only finitely many points of $\mathbb{P}^1_q$. Therefore, $\Supp\mathcal{F}_d^\varepsilon$ is a proper closed subset
of $\mathbb{P}^1_{Q_d}$. Choose a point $t\in \mathbb{P}^1$ so that
$i_q(t)\in\mathbb{P}^1_q\setminus\Supp\mathcal{F}_{d,q}^\varepsilon$ and let $U=Q_d\setminus
\pi(p^{-1}(t) \cap \Supp\mathcal{F}_d^\varepsilon)$, where $p:\mathbb{P}^1_{Q_d}\to \mathbb{P}^1$ is the projection. Obviously, $q\in U$. Now choose an exact
sequence on $\mathbb{P}^1$:
\[
0\to \mathcal{O}_{\mathbb{P}^1}\to \mathcal{O}_{\mathbb{P}^1}(m)\to T \to 0
\]
such that $\Supp T=\{t\}$. Pulling the sequence back to $\mathbb{P}^1_U$ (which preserves exactness) and tensoring with
$\mathcal{F}_d^\varepsilon|_{\mathbb{P}^1_U}$, we get a long exact sequence
\[
\TTor_1(\mathcal{F}_d^\varepsilon|_{\mathbb{P}^1_U},p^*T|_{\mathbb{P}^1_U})\to
\mathcal{F}_d^\varepsilon|_{\mathbb{P}^1_U}\to \mathcal{F}_d^\varepsilon(m)|_{\mathbb{P}^1_U}\to
\mathcal{F}_d^\varepsilon|_{\mathbb{P}^1_U}\otimes p^*T|_{\mathbb{P}^1_U}\to 0
\]
Note that $\Supp\mathcal{F}_d^\varepsilon$ and $\Supp p^*T$ do not intersect on $\mathbb{P}^1_U$, hence $\TTor_1(\mathcal{F}_d^\varepsilon|_{\mathbb{P}^1_U},p^*T|_{\mathbb{P}^1_U})=0$ and $\mathcal{F}_d^\varepsilon|_{\mathbb{P}^1_U}\otimes p^*T|_{\mathbb{P}^1_U}=0$, which gives rise to an isomorphism $\mathcal{F}_d^\varepsilon\simeq \mathcal{F}_d^\varepsilon(m)$ on $\mathbb{P}^1_U$. Obviously, this open set $U$ works for all $m\geq 0$.
\end{proof}

\begin{prop}\label{c2}
\begin{enumerate}
\item For any closed point $q\in Q_d$, there is an open neighborhood $U$ of $q$ such that
$\pi_*\mathcal{F}_d^\varepsilon(m)|_U\simeq\pi_*\mathcal{F}_d^\varepsilon(l)|_U$, for all $m,l\geq
0$.

\item There is an integer $m_d\geq d$ (independent of $r$) such that $I_{d,r,m}=I_{d,r,m_d}$ as subsheaves of
$\mathcal{O}_{Q_d}$ for all $m\geq m_d$ and for any $0\leq r\leq d-1$.
\end{enumerate}
\end{prop}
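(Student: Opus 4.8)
This should be essentially immediate from the preceding Lemma. For a fixed closed point $q\in Q_d$, that Lemma furnishes a \emph{single} open neighbourhood $U\ni q$ with $\mathcal F_d^\e|_{\mathbb P^1_U}\simeq\mathcal F_d^\e(m)|_{\mathbb P^1_U}$ for every $m\geq 0$. Since $\mathbb P^1_U\to U$ is the base change of $\pi$ along the open immersion $U\embed Q_d$, taking $\pi_*$ commutes with restriction to $\mathbb P^1_U$; applying $\pi_*$ to the Lemma's isomorphism gives $(\pi_*\mathcal F_d^\e)|_U\simeq(\pi_*\mathcal F_d^\e(m))|_U$ for all $m\geq 0$, and composing two of these yields $(\pi_*\mathcal F_d^\e(m))|_U\simeq(\pi_*\mathcal F_d^\e(l))|_U$ for all $m,l\geq 0$.

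\textbf{Part (2), strategy.} The plan is to recognize $I_{d,r,m}$ as a Fitting ideal of a sheaf which, for $m$ large, is $\pi_*\mathcal F_d^\e(m)$, and then feed in part (1). Unwinding the definition, $I_{d,r,m}$ is precisely the ideal sheaf of $(\widehat m+r+1)\times(\widehat m+r+1)$ minors of the $\mathcal O_{Q_d}$-homomorphism $\rho_{d,m}\colon\pi_*V^\vee_{\mathbb P^1_{Q_d}}(m)\to\pi_*\mathcal E_d^\vee(m)$ between locally free sheaves, the target being locally free of rank $\widehat m+d$ for $m\geq d$. Using the standard identification of the ideal of $j\times j$ minors of a presentation $\mathcal O^a\to\mathcal O^b\to M\to 0$ with $\mathrm{Fitt}_{b-j}(M)$, together with the independence of Fitting ideals from the chosen presentation, this gives
\[
I_{d,r,m}=\mathrm{Fitt}_{d-r-1}\bigl(\Coker(\rho_{d,m})\bigr).
\]

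\textbf{Part (2), identifying the cokernel.} Next I would show $\Coker(\rho_{d,m})\simeq\pi_*\mathcal F_d^\e(m)$ for $m\gg 0$. Break the four-term exact sequence (\ref{a2}) into $0\to\mathcal F_d^\vee\to V^\vee_{\mathbb P^1_{Q_d}}\to\mathcal G\to 0$ and $0\to\mathcal G\to\mathcal E_d^\vee\to\mathcal F_d^\e\to 0$, where $\mathcal G$ is the sheaf image, twist by $\mathcal O(m)$, and apply $\pi_*$. The fibres of $\pi$ are one-dimensional, so $R^2\pi_*=0$; hence $R^1\pi_*\mathcal G(m)$ is a quotient of $R^1\pi_*V^\vee_{\mathbb P^1_{Q_d}}(m)=0$ for $m\geq 0$, while relative Serre vanishing supplies an integer $m_1$ with $R^1\pi_*\mathcal F_d^\vee(m)=0$ for all $m\geq m_1$. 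For $m\geq m_1$ these vanishings make $\pi_*V^\vee_{\mathbb P^1_{Q_d}}(m)\surj\pi_*\mathcal G(m)$ and $\pi_*\mathcal E_d^\vee(m)\surj\pi_*\mathcal F_d^\e(m)$ with kernel $\pi_*\mathcal G(m)$; since $\rho_{d,m}$ is $\pi_*$ applied to $V^\vee_{\mathbb P^1_{Q_d}}(m)\surj\mathcal G(m)\embed\mathcal E_d^\vee(m)$, it factors as $\pi_*V^\vee_{\mathbb P^1_{Q_d}}(m)\surj\pi_*\mathcal G(m)\embed\pi_*\mathcal E_d^\vee(m)$, so $\Coker(\rho_{d,m})=\pi_*\mathcal E_d^\vee(m)/\pi_*\mathcal G(m)=\pi_*\mathcal F_d^\e(m)$. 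Hence $I_{d,r,m}=\mathrm{Fitt}_{d-r-1}\bigl(\pi_*\mathcal F_d^\e(m)\bigr)$ for all $m\geq m_1$ and all $r$.

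\textbf{Part (2), conclusion, and the main obstacle.} Set $m_d:=\max\{d,m_1\}$, which is independent of $r$. Given $m\geq m_d$, part (1) provides, for each closed point $q$, an open $U\ni q$ with $\pi_*\mathcal F_d^\e(m)|_U\simeq\pi_*\mathcal F_d^\e(m_d)|_U$; as Fitting ideals are formed locally and are invariant under isomorphism, $I_{d,r,m}|_U=I_{d,r,m_d}|_U$, and since such $U$ cover $Q_d$ we conclude $I_{d,r,m}=I_{d,r,m_d}$ for every $0\leq r\leq d-1$. The step carrying the real content is the cokernel identification: the essential input is relative Serre vanishing producing an \emph{effective} $m_1$ with $R^1\pi_*\mathcal F_d^\vee(m)=0$, and this cannot be replaced by a fibrewise computation since $\mathcal F_d^\vee=\HHom(\mathcal F_d,\mathcal O_{\mathbb P^1_{Q_d}})$ need not be flat over $Q_d$. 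Everything else — the minors/Fitting translation, the cokernel chase, and the gluing in the last step — is routine, and part (1) is a one-line consequence of the Lemma.
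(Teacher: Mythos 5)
Your proposal is correct and follows essentially the same route as the paper: part (1) by pushing forward the Lemma's isomorphism, and part (2) by splitting the four-term sequence (\ref{a2}) into two short exact sequences, killing the relevant $R^1\pi_*$ terms for $m$ large to identify $\Coker(\rho_{d,m})$ with $\pi_*\mathcal{F}_d^\varepsilon(m)$, recognizing $I_{d,r,m}$ as the $(d-r-1)$th Fitting ideal of that pushforward, and invoking part (1) together with the presentation-independence of Fitting ideals on an affine cover. The only (harmless) divergence is that you get $R^1\pi_*\mathcal{G}(m)=0$ for free from $R^2\pi_*=0$, whereas the paper simply invokes vanishing for both $R^1\pi_*\mathcal{F}_d^\vee(m)$ and $R^1\pi_*\mathcal{H}(m)$ at once.
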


\begin{proof}
(1) By the above lemma, we deduce that there exists $U$ such that
$\mathcal{F}_d^\varepsilon(m)|_{\mathbb{P}^1_U}\simeq
\mathcal{F}_d^\varepsilon(l)|_{\mathbb{P}^1_U}$ for arbitrary $m,l$. Then
\[
\pi_{U*}(\mathcal{F}_d^\varepsilon(m)|_{\mathbb{P}^1_U})\simeq
\pi_{U*}(\mathcal{F}_d^\varepsilon(l)|_{\mathbb{P}^1_U})
\]
The statement follows from the facts that
\[
\pi_*\mathcal{F}_d^\varepsilon(m)|_U=\pi_{U*}(\mathcal{F}_d^\varepsilon(m)|_{\mathbb{P}^1_U})\quad\text{ and
}\quad\pi_*\mathcal{F}_d^\varepsilon(l)|_U=\pi_{U*}(\mathcal{F}_d^\varepsilon(l)|_{\mathbb{P}^1_U}).
\]

(2) We break up the exact sequence (\ref{a2}) into two short exact sequences:
\[
0\to \mathcal{F}_d^\vee\to V^\vee_{\mathbb{P}^1_{Q_d}}\to \mathcal{H}\to 0, \quad 0\to
\mathcal{H}\to \mathcal{E}_d^\vee\to \mathcal{F}_d^\varepsilon\to 0
\]
Twisting the two sequences with $\mathcal{O}_{\mathbb{P}^1_{Q_d}}(m)$ (which preserves exactness) and
applying $\pi_*$, we get another two exact sequences
\[
\begin{split}
& 0\to \pi_*\mathcal{F}_d^\vee(m) \to \pi_*V^\vee_{\mathbb{P}^1_{Q_d}}(m)\to
\pi_*\mathcal{H}(m)\to R^1\pi_*\mathcal{F}_d^\vee(m) \\
& 0\to \pi_*\mathcal{H}(m)\to \pi_*\mathcal{E}_d^\vee(m)\to \pi_*\mathcal{F}_d^\varepsilon(m)\to
R^1\pi_*\mathcal{H}(m)
\end{split}
\]
There is an integer $m_d\geq d$ such that
$R^1\pi_*\mathcal{F}_d^\vee(m)=0=R^1\pi_*\mathcal{H}(m)$ for all $m\geq m_d$. Hence for $m\geq m_d$, the above two sequences
join together into one long exact sequence:
\begin{equation}\label{mainExactSequence}
0\to \pi_*\mathcal{F}_d^\vee(m) \to \pi_*V^\vee_{\mathbb{P}^1_{Q_d}}(m)\stackrel{\rho_{_{d,m}}}{\to} \pi_*\mathcal{E}_d^\vee(m)\to
\pi_*\mathcal{F}_d^\varepsilon(m)\to 0
\end{equation}
We see that $\pi_*V^\vee_{\mathbb{P}^1_{Q_d}}(m)=V_{Q_d}^\vee\otimes
H^0(\mathbb{P}^1,\mathcal{O}(m))$ is a free $\mathcal{O}_{Q_d}$-module, and $\pi_*\mathcal{E}_d^\vee(m)$ is locally free (of rank $(\widehat m+d)$).

Now fix an $m> m_d$. We can choose an affine open cover $U_i$ of $Q_d$ such that $\pi_*\mathcal{F}_d^\e(m)|_{U_i}\simeq \pi_*\mathcal{F}_d^\e(m_d)|_{U_i}$ on each $U_i$. We may also assume that $\pi_*\mathcal{E}_d^\vee(m)$ and $\pi_*\mathcal{E}_d^\vee(m_d)$ are both free on each $U_i$. We have exact sequences
\[
\pi_*V^\vee_{\mathbb{P}^1_{Q_d}}(l)|_{U_i}\stackrel{\rho_{_{d,l}}|_{U_i}}{\to}
\pi_*\mathcal{E}_{d}^\vee(l)|_{U_i}\to \pi_*\mathcal{F}_{d}^\varepsilon(l)|_{U_i}\to 0, \quad l=m, m_d.
\]
So the ideals $I_{d,r,l}(U_i)\subset \mathcal{O}_{Q_d}(U_i)$ are exactly the $(d-r-1)$th Fitting ideal of $\pi_*\mathcal{F}_{d}^\varepsilon(l)|_{U_i}$ for $l=m,m_d$. Since $\pi_*\mathcal{F}_d^\e(m)|_{U_i}\simeq \pi_*\mathcal{F}_d^\e(m_d)|_{U_i}$, we have $I_{d,r,m}(U_i)=I_{d,r,m_d}(U_i)$ for each $U_i$. It then follows that $I_{d,r,m}=I_{d,r,m_d}$.
\end{proof}

For $0\leq r\leq d-1$, we denote by $Z_{d,r}$ for the closed subscheme of $Q_d$
defined by the ideal $I_{d,r,m_d}$, and write $I_{Z_{d,r}}:=I_{d,r,m_d}$. Obviously, the support of $Z_{d,r}$ is the closed set $C_{d,r}$ defined earlier. The $Z_{d,r}$'s form a chain of closed subschemes:
\[
Z_{d,0}\subset Z_{d,1}\subset\cdots\subset Z_{d,d-1}=Q_d\setminus \mathring{Q}_d
\]
For convenience of notations, we set
\[
Z_{d,-1}:=\varnothing,\quad Z_{d,d}:=Q_d,\quad\text{ and }\quad  \mathring{Z}_{d,r}:=Z_{d,r}\setminus Z_{d,r-1},\
\text{ for } 0\leq r\leq d.
\]
Thus we have $\mathring{Z}_{d,0}=Z_{d,0}$ and $\mathring{Z}_{d,d}=\mathring{Q}_d$. Note that each $\mathring{Z}_{d,r}$ is an open subscheme of $Z_{d,r}$.

\begin{thm}\label{thmZ}
\begin{enumerate}
\item For $0\leq r\leq d$, $Z_{d,r}$ is the schematic zero locus of $\bigwedge^{\widehat
m+r+1}\rho_{d,m}$, and $|\mathring{Z}_{d,r}|=\{q\in Q_d\,|\,\deg
\mathcal{F}_{d,q}^\varepsilon=d-r\}$. In
particular, $\mathring{Z}_{d,r}$'s are pairwise disjoint, and
$|Q_d|=\bigsqcup_{r=0}^{d}|\mathring{Z}_{d,r}|$.

\item For $0\leq r\leq d$, the inclusion $i: \mathring{Z}_{d,r}\hookrightarrow Q_d$ has the
following \emph{universal property}: if $Y$ is a noetherian scheme and $f: Y\to Q_d$ is a morphism,
then ${\bar f}^*\mathcal{F}_d^\varepsilon$ is flat
over $Y$ with relative degree $d-r$ if and only if $f$ factors through $i$. In particular, the
sheaf $(\mathcal{F}_d)_{\mathring{Z}_{d,r}}^\e$ on $\mathbb{P}^1_{\mathring{Z}_{d,r}}$
is flat over $\mathring{Z}_{d,r}$ with relative degree $d-r$.
\end{enumerate}
\end{thm}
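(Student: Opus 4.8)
The plan is to reduce both parts to the flattening‑stratification result of Proposition~\ref{p8}, the decisive extra input being that $\mathcal{F}_d^\varepsilon$ is a torsion sheaf whose support is \emph{finite} over $Q_d$; this makes the formation of $\pi_*\mathcal{F}_d^\varepsilon$ (and of its twists) compatible with \emph{arbitrary} base change, which is exactly what is needed since $\mathcal{F}_d^\varepsilon$ itself is not flat over $Q_d$.

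For part (1), I would begin from the exact sequence (\ref{mainExactSequence}): for $m\geq m_d$ it exhibits $\pi_*\mathcal{F}_d^\varepsilon(m)$ as the cokernel of $\rho_{d,m}\colon\pi_*V^\vee_{\mathbb{P}^1_{Q_d}}(m)\to\pi_*\mathcal{E}_d^\vee(m)$, i.e.\ as a sheaf with a locally free presentation whose target $\pi_*\mathcal{E}_d^\vee(m)$ has rank $\widehat m+d$. So Proposition~\ref{p8}(4) applies with $E_1=\pi_*V^\vee_{\mathbb{P}^1_{Q_d}}(m_d)$, $E_0=\pi_*\mathcal{E}_d^\vee(m_d)$, $l=\widehat{m_d}+d$, $\rho=\rho_{d,m_d}$. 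Since $Z_{d,r}$ is by construction the subscheme defined by the $(\widehat{m_d}+r+1)$-st determinantal ideal of $\rho_{d,m_d}$, i.e.\ $Z_{d,r}=Z_{\widehat{m_d}+r+1}$ in the notation of Proposition~\ref{p8}(4) (with the case $r=d$ giving the zero section, hence $Z_{d,d}=Q_d$), Proposition~\ref{p8}(4) yields
\[
\mathring{Z}_{d,r}=Z_{d,r}\setminus Z_{d,r-1}=X_{\,l-(\widehat{m_d}+r)}=X_{d-r}\qquad(0\leq r\leq d)
\]
as schemes, where $X_s\subset Q_d$ is the locus where $\pi_*\mathcal{F}_d^\varepsilon(m_d)$ has fiber dimension $s$. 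For the set-theoretic identification I would use cohomology and base change for the locally free sheaves $\pi_*\mathcal{E}_d^\vee(m_d)$ and $\pi_*V^\vee_{\mathbb{P}^1_{Q_d}}(m_d)$ to see that the fiber of $\rho_{d,m_d}$ at $q$ is the map $\rho_{q,m_d}$ of Proposition~\ref{p1}, so that $\dim_{\kappa(q)}\pi_*\mathcal{F}_d^\varepsilon(m_d)|_q=(\widehat{m_d}+d)-\rank\rho_{q,m_d}$, which by Proposition~\ref{c1} equals $d-r$ precisely when $\deg(\mathcal{F}_{d,q})_{\tor}=\deg\mathcal{F}_{d,q}^\varepsilon=d-r$. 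This gives $|\mathring{Z}_{d,r}|=\{q\mid\deg\mathcal{F}_{d,q}^\varepsilon=d-r\}$; the ``schematic zero locus'' assertion is the definition of $Z_{d,r}$ together with Proposition~\ref{c2}(2), and the disjointness and the partition $|Q_d|=\bigsqcup_{r=0}^{d}|\mathring{Z}_{d,r}|$ follow since $\deg\mathcal{F}_{d,q}^\varepsilon$ takes a single value in $\{0,\dots,d\}$ for each $q$.

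For part (2), I would exploit the finiteness. Because each $\mathcal{F}_{d,q}^\varepsilon$ is torsion on $\mathbb{P}^1_q$ (Proposition~\ref{FlatTorsion}), $\Supp\mathcal{F}_d^\varepsilon$ is a closed subscheme of $\mathbb{P}^1_{Q_d}$ with finite fibers over $Q_d$ and, being proper over $Q_d$, is therefore finite over $Q_d$; by Proposition~\ref{Pullback-Ext} the same holds after any base change $f\colon Y\to Q_d$, with $\Supp\bar f^*\mathcal{F}_d^\varepsilon$ finite over $Y$. Pushforward along a finite (hence affine) morphism commutes with arbitrary base change and detects flatness and local freeness of coherent sheaves, so for every $m\geq 0$ one gets a natural identification $f^*\!\big(\pi_*\mathcal{F}_d^\varepsilon(m)\big)=\pi_{Y*}\big(\bar f^*\mathcal{F}_d^\varepsilon(m)\big)$, and $\bar f^*\mathcal{F}_d^\varepsilon$ is flat over $Y$ if and only if $f^*\pi_*\mathcal{F}_d^\varepsilon$ is locally free over $Y$, the relative degree then being its rank (a degree-$e$ torsion sheaf on $\mathbb{P}^1$ has $h^0=e$). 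Hence $\bar f^*\mathcal{F}_d^\varepsilon$ is flat over $Y$ with relative degree $d-r$ iff $f^*\pi_*\mathcal{F}_d^\varepsilon$ is locally free of rank $d-r$, which by Proposition~\ref{p8}(3) applied to the sheaf $\pi_*\mathcal{F}_d^\varepsilon$ on $Q_d$ holds iff $f$ factors through the stratum where $\pi_*\mathcal{F}_d^\varepsilon$ has fiber dimension $d-r$; and by Proposition~\ref{c2}(1) the sheaves $\pi_*\mathcal{F}_d^\varepsilon$ and $\pi_*\mathcal{F}_d^\varepsilon(m_d)$ are locally isomorphic, so that stratum is $X_{d-r}=\mathring{Z}_{d,r}$ by part (1). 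This is the universal property; the last assertion is the case $Y=\mathring{Z}_{d,r}$, $f=i$.

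The step I expect to demand the most care is the scheme-theoretic bookkeeping rather than the set-theoretic content: that (\ref{mainExactSequence}) is a genuine locally free \emph{presentation}, so that $Z_{d,r}$ really equals $Z_{\widehat{m_d}+r+1}$ of Proposition~\ref{p8}(4) (here the uniform choice of $m_d$, depending only on $d,n,k$, is essential); that the flattening stratifications of $\pi_*\mathcal{F}_d^\varepsilon$ and of $\pi_*\mathcal{F}_d^\varepsilon(m_d)$ coincide via Proposition~\ref{c2}(1); and---most importantly---that the finiteness of $\Supp\mathcal{F}_d^\varepsilon\to Q_d$ persists under arbitrary base change. It is this last point that lets part (2) bypass the failure of cohomology-and-base-change for the non-flat sheaf $\mathcal{F}_d^\varepsilon$ and move freely between $\mathcal{F}_d^\varepsilon$ on $\mathbb{P}^1_{Q_d}$ and $\pi_*\mathcal{F}_d^\varepsilon$ on $Q_d$.
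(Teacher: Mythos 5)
Your argument is correct, and its overall architecture is the same as the paper's: part (1) via the identity $\deg\mathcal{F}_{d,q}^\e=\deg(\mathcal{F}_{d,q})_\tor$ together with Propositions \ref{p1}--\ref{c1} and \ref{c2}(2), and part (2) by translating flatness of $\bar f^*\mathcal{F}_d^\e$ into local freeness of a pullback of a pushforward and then invoking the universal property of Mumford's flattening stratification (Proposition \ref{p8}(3)(4)) for the presentation coming from the sequence (\ref{mainExactSequence}). The one place where you genuinely diverge is the bridge in part (2) between the sheaf on $\mathbb{P}^1_Y$ and the sheaf on $Y$: the paper twists by $\mathcal{O}(m)$ and uses that for $m\gg 0$ (with bounds $N_1,N_2$ depending on the given $f:Y\to Q_d$) one has $f^*\pi_*\mathcal{F}_d^\e(m)=\pi_{Y*}\bar f^*\mathcal{F}_d^\e(m)$ and that flatness of $\bar f^*\mathcal{F}_d^\e$ is equivalent to local freeness of these pushforwards for all large $m$; you instead observe that $\Supp\mathcal{F}_d^\e$ is proper and quasi-finite, hence finite, over $Q_d$, so that $\pi_*$ restricted to this support is an exact functor commuting with arbitrary base change and detecting flatness, which gives the unconditional identification $f^*\pi_*\mathcal{F}_d^\e=\pi_{Y*}\bar f^*\mathcal{F}_d^\e$ with no twisting and no $Y$-dependent constants. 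Your route costs you the (routine) verifications that the support is finite, that finiteness persists under base change, and that finite pushforward detects flatness and computes fibers; in exchange it eliminates the bookkeeping of the integers $N_1,N_2$ and lets you run the whole argument as a clean chain of equivalences on the untwisted sheaf, with Proposition \ref{c2}(1) used only at the very end to match the stratification of $\pi_*\mathcal{F}_d^\e$ with that of $\pi_*\mathcal{F}_d^\e(m_d)$ defining $\mathring{Z}_{d,r}$. Both proofs are sound; yours is arguably the more transparent of the two on this point.
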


\begin{proof}
(1) follows easily from the fact that $\deg(\mathcal{F}_d^\e)_q=\deg(\mathcal{F}_{d,q})_\tor$. We now prove (2). For any $m\geq m_d$, we have an exact sequence:
\[
\pi_*V^\vee_{\mathbb{P}^1_{Q_d}}(m)\stackrel{\rho_{_{d,m}}}{\to} \pi_*\mathcal{E}_d^\vee(m)\to
\pi_*\mathcal{F}_d^\varepsilon(m)\to 0
\]
There is an integer $N_1$ such that, for all $m\geq N_1$,
\[
f^*\pi_*\mathcal{F}_d^\varepsilon(m)=\pi_{Y*}{\bar f}^*\mathcal{F}_d^\varepsilon(m)
\]
Suppose $f$ factor through $i$. By Proposition \ref{p8} (3) and (4), and by the definition of $Z_{d,r}$,
$f^*\pi_*\mathcal{F}_d^\varepsilon(m)$ is locally free for all $m\geq m_d$. So $\pi_{Y*}\bar
f^*\mathcal{F}_d^\varepsilon(m)$ is locally free for all $m\geq \max\{m_d,N_1\}$. It follows that ${\bar f}^*\mathcal{F}_d^\varepsilon$ is flat over $Y$. In particular, taking $Y=\mathring{Z}_{d,r}$ and $f$ to be the identity map, we see that $(\mathcal{F}_d)_{\mathring{Z}_{d,r}}^\e$ is flat over $\mathring{Z}_{d,r}$.

Now suppose $\bar f^*\mathcal{F}_d^\varepsilon$ is flat over $Y$. Then there is an integer $N_2$ such that $\pi_{Y*}\bar
f^*\mathcal{F}_d^\varepsilon(m)$ is locally free for all $m\geq N_2$. Thus $f^*\pi_*\mathcal{F}_d^\varepsilon(m)$ is
locally free for all $m\geq \max\{N_1,N_2,m_d\}$. By Proposition \ref{p8} again, $f$ factors through $i$.
\end{proof}
\begin{rem}
The above theorem says that the locally closed subschemes $Z_{d,0},\mathring{Z}_{d,1},\dots,\mathring{Z}_{d,d-1}$ and $\mathring{Q}_d$ form the \emph{flattening stratification} of $Q_d$ by the sheaf $\mathcal{F}_d^\e$.
\end{rem}

\section{More about the boundary}

For $d\geq r\geq 0$, we consider the relative Quot scheme over $Q_r$:
\[
Q_{d,r}:=\Quot^{0,d-r}_{\mathcal{E}_{r}/\mathbb{P}^1_{Q_r}/Q_r} %\quad \text{(see Notation \ref{relQuot})}.
\]
We denote by $\theta: Q_{d,r}\to Q_r$ the structure morphism. It is equipped with a universal exact sequence on $\mathbb{P}^1_{Q_{d,r}}$:
\begin{equation}\label{UniversalExactSequence2}
0\to \mathcal{E}_{d,r}\to \bar\theta^*\mathcal{E}_{r}\to \mathcal{T}_{d,r}\to 0
\end{equation}
where $\mathcal{T}_{d,r}$ is flat over $Q_{d,r}$ with relative degree $d-r$ and rank $0$ (i.e.,
$\mathcal{T}_{d,r}$ is torsion), and $\mathcal{E}_{d,r}$ is taken as a subsheaf of $\bar\theta^*\mathcal{E}_{r}$: $\mathcal{E}_{d,r}\subset\bar\theta^*\mathcal{E}_{r}$. It is easy to see that $\mathcal{E}_{d,r}$ is locally free of rank $k$ and of relative degree $-d$.

We now give a set-theoretic description of the points in $Q_{d,r}$. Let $q=[V_{\mathbb{P}^1}\surj F]$ be a closed point of $Q_r$ and let $E=\ker(V_{\mathbb{P}^1}\surj F)$. Obviously, $E$ is locally free of rank $k$ and degree $-r$. The fiber $\theta^{-1}(q)$ is the Quot
scheme $\Quot_{E/V_{\mathbb{P}^1}/\Bbbk}^{0,d-r}$. So any closed point in the fiber $\theta^{-1}(q)$ is represented by a
quotient $E\surj T$ where $T$ is torsion with degree $d-r$. The kernel $E':=\ker(E\surj T)$ is locally free of rank $k$ and degree $-d$.

\begin{lemma}
With $E'$ and $T$ defined as above, we have $\dim \Hom(E',T)=k(d-r)$ and $\Ext^1(E',T)=0$.
\end{lemma}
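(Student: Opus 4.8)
The plan is to reduce both assertions to cohomology computations on $\mathbb{P}^1$. Since $E'$ is locally free on $\mathbb{P}^1$, the sheaves $\EExt^q(E',T)$ vanish for $q\geq 1$ and $\HHom(E',T)\simeq E'^\vee\otimes T$. Hence the local-to-global spectral sequence for $\Ext$ has only one nonzero row and furnishes canonical isomorphisms
\[
\Ext^q(E',T)\simeq H^q(\mathbb{P}^1, E'^\vee\otimes T),\qquad q\geq 0.
\]
Equivalently, one may invoke Grothendieck's splitting theorem to write $E'\simeq\bigoplus_{i=1}^k\mathcal{O}_{\mathbb{P}^1}(a_i)$ with $\sum_i a_i=-d$ and compute each term $\Hom(\mathcal{O}(a_i),T)=H^0(T(-a_i))$ and $\Ext^1(\mathcal{O}(a_i),T)=H^1(T(-a_i))$ separately.

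Next I would observe that $E'^\vee\otimes T$ is torsion: it is the tensor product of a locally free sheaf with the torsion sheaf $T$, so its support equals the finite support of $T$. Consequently $H^1(\mathbb{P}^1,E'^\vee\otimes T)=0$, which gives $\Ext^1(E',T)=0$, while $H^0(\mathbb{P}^1,E'^\vee\otimes T)$ has dimension equal to the length of $E'^\vee\otimes T$. At each point of $\Supp T$ the sheaf $T$ is a finite-length module over the local ring whereas $E'^\vee$ is free of rank $k$, so locally $E'^\vee\otimes T\simeq T^{\oplus k}$; summing lengths over $\Supp T$ yields $\deg(E'^\vee\otimes T)=k\deg T=k(d-r)$, hence $\dim\Hom(E',T)=k(d-r)$.

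There is no genuine obstacle here: the statement is an immediate consequence of the fact that $\mathbb{P}^1$ is a smooth curve on which every vector bundle is locally free (so the higher sheaf-$\EExt$ into $T$ vanish) together with the fact that $T$ is torsion (so all higher cohomology of a sheaf supported on $\Supp T$ vanishes). The only point requiring a little care is the bookkeeping that tensoring a torsion sheaf with a rank-$k$ locally free sheaf multiplies its length by $k$.
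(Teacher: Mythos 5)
Your proof is correct and rests on the same essential facts as the paper's: that $T$ is a torsion sheaf of length $d-r$ on $\mathbb{P}^1$, so twisting by line bundles does not change its cohomology, $H^1$ vanishes, and $h^0$ equals the length. The paper simply takes the splitting-theorem route you mention as an alternative, writing $E'\simeq\bigoplus_{i=1}^k\mathcal{O}(d_i)$ and identifying $\Hom(E',T)\simeq H^0(T)^{\oplus k}$ and $\Ext^1(E',T)\simeq H^1(T)^{\oplus k}$ directly, so your local-to-global/length packaging is only a cosmetic variant.
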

\begin{proof}
We have $E'\simeq\bigoplus_{i=1}^k\mathcal{O}(d_i)$. Then
\[
\begin{split}
\Hom(E',T)&\simeq\bigoplus_{i=1}^k\Hom(\mathcal{O}(d_i),T)\simeq\bigoplus_{i=1}^k\Hom(\mathcal{O},T)=H^0(T)^{\oplus
k} \\
\Ext^1(E',T)&\simeq\bigoplus_{i=1}^k \Ext^1(\mathcal{O}(d_i),T)\simeq\bigoplus_{i=1}^k
\Ext^1(\mathcal{O},T)=H^1(T)^{\oplus k}
\end{split}
\]
So $\dim\Hom(E',T)=k\dim H^0(T)=k(d-r)$. That $\Ext^1(E',T)=0$ follows from $H^1(T)=0$.
\end{proof}

By the deformation theory on Quot schemes (see \cite{Ser06}, Proposition 4.4.4), the above lemma implies that
\begin{prop}
The structure morphism $\theta: Q_{d,r}\to Q_r$ is smooth of relative dimension $k(d-r)$.
\end{prop}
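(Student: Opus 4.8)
The plan is to read off this statement from the tangent--obstruction theory of relative Quot schemes, fed by the Lemma just proved. Fix a closed point $y\in Q_{d,r}$ lying over a closed point $q=[V_{\mathbb{P}^1}\surj F]\in Q_r$. By the set-theoretic description above, $y$ corresponds to a quotient $E\surj T$ on the fiber $\mathbb{P}^1_q\cong\mathbb{P}^1$, where $E=\mathcal{E}_{r,q}=\ker(V_{\mathbb{P}^1}\surj F)$ is locally free of rank $k$ and degree $-r$, $T$ is torsion of degree $d-r$, and the kernel $E'=\ker(E\surj T)$ is locally free of rank $k$ and degree $-d$. In terms of the universal exact sequence (\ref{UniversalExactSequence2}), $E'$ is $\mathcal{E}_{d,r}$ restricted to $\mathbb{P}^1_y$ and $T$ is $\mathcal{T}_{d,r}$ restricted to $\mathbb{P}^1_y$.

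First I would invoke Proposition 4.4.4 of \cite{Ser06}. Since $\mathbb{P}^1_{Q_r}\to Q_r$ is projective and $\mathcal{E}_r$ is flat (indeed locally free) over $Q_r$, that result identifies the relative Zariski tangent space of $\theta$ at $y$ with $\Hom_{\mathbb{P}^1}(E',T)$, places the obstructions to extending infinitesimal deformations of the quotient $[E\surj T]$ over $Q_r$ in $\Ext^1_{\mathbb{P}^1}(E',T)$, and concludes that $\theta$ is smooth at $y$ whenever $\Ext^1_{\mathbb{P}^1}(E',T)=0$, of relative dimension at $y$ equal to $\dim_\Bbbk\Hom_{\mathbb{P}^1}(E',T)$. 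By the preceding Lemma, $\Ext^1_{\mathbb{P}^1}(E',T)=0$ and $\dim_\Bbbk\Hom_{\mathbb{P}^1}(E',T)=k(d-r)$. Hence $\theta$ is smooth at $y$ of relative dimension $k(d-r)$.

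Finally, since $y$ was an arbitrary closed point of the finite-type $\Bbbk$-scheme $Q_{d,r}$, and since the smooth locus of a morphism is open while every nonempty closed subset of a Jacobson scheme contains a closed point, $\theta$ is smooth; as the relative dimension computed above is the constant $k(d-r)$ independent of the chosen point, $\theta$ is smooth of pure relative dimension $k(d-r)$.

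The only point demanding genuine care---and the sole conceivable obstacle---is checking that the hypotheses of the cited deformation-theoretic statement are actually met in the relative setting, in particular that the relevant relative obstruction group is correctly the group $\Ext^1$ computed on the fiber $\mathbb{P}^1_q$ (and not on $\mathbb{P}^1_{Q_r}$); once this is granted the argument is immediate. If one prefers a self-contained proof, the same conclusion follows from the infinitesimal lifting criterion for the locally finitely presented morphism $\theta$: given a square-zero extension $T_0\embed T$ of artinian local $\Bbbk$-schemes with ideal $J$, a morphism $T\to Q_r$, and a lift $T_0\to Q_{d,r}$, one must extend the given $T_0$-flat quotient of $\bar\theta^*\mathcal{E}_r|_{\mathbb{P}^1_{T_0}}$ to a $T$-flat quotient of $\bar\theta^*\mathcal{E}_r|_{\mathbb{P}^1_T}$; dévissing to the case in which $J\cong\Bbbk$, the obstruction becomes a class in $\Ext^1_{\mathbb{P}^1}(E',T)$, which vanishes by the Lemma, while the set of extensions, when nonempty, is a torsor under $\Hom_{\mathbb{P}^1}(E',T)$ of dimension $k(d-r)$, giving both formal smoothness and the relative dimension count.
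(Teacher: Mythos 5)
Your proposal is correct and follows exactly the paper's route: the paper likewise cites Proposition 4.4.4 of \cite{Ser06} and deduces smoothness of relative dimension $k(d-r)$ directly from the preceding Lemma's computation that $\Ext^1(E',T)=0$ and $\dim\Hom(E',T)=k(d-r)$. Your additional remarks on openness of the smooth locus and the infinitesimal lifting criterion only make explicit what the paper leaves implicit.
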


\begin{prop}
$Q_{d,r}$ is an irreducible nonsingular projective variety of dimension $\dim Q_r+k(d-r)=nr+k(n-k+d-r)$.
\end{prop}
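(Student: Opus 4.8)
The plan is to treat the four assertions separately; the first three are formal, and only the irreducibility requires real work. \emph{Projectivity:} by the representability theorem recalled in Section~2, $Q_{d,r}=\Quot^{0,d-r}_{\mathcal{E}_{r}/\mathbb{P}^1_{Q_r}/Q_r}$ is projective over $Q_r$, and $Q_r$ is projective over $\Bbbk$, so $Q_{d,r}$ is projective over $\Bbbk$. \emph{Nonsingularity:} $Q_r$ is nonsingular and, by the preceding proposition, $\theta\colon Q_{d,r}\to Q_r$ is smooth; hence the composite $Q_{d,r}\to Q_r\to\Spec\Bbbk$ is smooth, so $Q_{d,r}$ is nonsingular. \emph{Dimension:} $Q_r$ has pure dimension $nr+k(n-k)$ and $\theta$ has relative dimension $k(d-r)$, so $Q_{d,r}$ has pure dimension $nr+k(n-k)+k(d-r)=nr+k(n-k+d-r)$.

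For the irreducibility I would argue as follows. Since $Q_{d,r}$ is nonsingular, its connected components coincide with its irreducible components, so it suffices to prove that $Q_{d,r}$ is connected. The morphism $\theta$ is smooth, hence flat, of finite type, open, and surjective (each fibre is isomorphic to $\Quot^{0,d-r}_{E/\mathbb{P}^1/\Bbbk}$ for a locally free sheaf $E$ of rank $k$ on $\mathbb{P}^1$, as in the description above, which is nonempty). A short general argument then reduces connectedness of $Q_{d,r}$ to connectedness of the fibres of $\theta$: if $Q_{d,r}=X_1\sqcup X_2$ with both parts nonempty, open and closed, then, each fibre being connected, it meets at most one of $X_1,X_2$, so $\theta(X_1)$ and $\theta(X_2)$ are disjoint, nonempty, open, and cover the irreducible scheme $Q_r$, which is impossible. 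Thus it is enough to show that for a locally free sheaf $E$ of rank $k$ on $\mathbb{P}^1$ and any $e\geq 0$, the Quot scheme $\Quot^{0,e}_{E/\mathbb{P}^1/\Bbbk}$ of length-$e$ torsion quotients of $E$ is irreducible.

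I would prove this by induction on $e$. For $e=0$ it is a point and for $e=1$ it is the projective bundle $\mathbb{P}(E^\vee)$ over $\mathbb{P}^1$, hence irreducible. For the inductive step, let $\widetilde Q$ be the flag (nested) Quot scheme parametrizing chains $E\surj T\surj T'$ with $T$ and $T'$ torsion of lengths $e$ and $e-1$ respectively; it is projective. Forgetting $T$ gives a morphism $\widetilde Q\to\Quot^{0,e-1}_{E/\mathbb{P}^1}$ whose fibre over $[E\surj T']$ is the set of length-one quotients of $\ker(E\surj T')$, which is locally free of rank $k$ on $\mathbb{P}^1$; globally this morphism is the projective bundle $\mathbb{P}(\mathcal{K}^\vee)$ attached to the universal kernel $\mathcal{K}$ on $\mathbb{P}^1\times\Quot^{0,e-1}_{E/\mathbb{P}^1}$. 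By the inductive hypothesis $\Quot^{0,e-1}_{E/\mathbb{P}^1}$ is irreducible, hence so are $\mathbb{P}^1\times\Quot^{0,e-1}_{E/\mathbb{P}^1}$ and its projective bundle $\widetilde Q$. On the other hand, forgetting $T'$ gives a proper morphism $\widetilde Q\to\Quot^{0,e}_{E/\mathbb{P}^1}$, and it is surjective because every torsion sheaf $T$ of length $e\geq 1$ has a length-one subsheaf, hence a length-$(e-1)$ quotient. Therefore $\Quot^{0,e}_{E/\mathbb{P}^1}$ is the image of the irreducible scheme $\widetilde Q$ under a proper morphism, so it is irreducible, completing the induction. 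Combining everything, $Q_{d,r}$ is nonsingular, projective, of pure dimension $nr+k(n-k+d-r)$, and connected, hence an irreducible variety.

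I expect the main obstacle to be precisely this irreducibility, and within it the irreducibility of the punctual Quot schemes $\Quot^{0,e}_{E/\mathbb{P}^1/\Bbbk}$; the delicate points are the construction of the nested Quot scheme $\widetilde Q$ and the verification that the forgetful morphism $\widetilde Q\to\Quot^{0,e-1}_{E/\mathbb{P}^1}$ is genuinely a projective bundle (which rests on the universal kernel being locally free, a consequence of the flatness of the universal quotient). An alternative would be to connect an arbitrary point $[E\surj T]$ to a point with reduced torsion support by an explicit flat one-parameter family spreading the torsion out to distinct points, combined with the fact that the locus of quotients with reduced support is a $(\mathbb{P}^{k-1})^e$-bundle over the irreducible space of $e$ unordered distinct points of $\mathbb{P}^1$; but the nested-Quot argument keeps things self-contained and avoids dimension estimates on the support morphism.
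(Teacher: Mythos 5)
Your proof is correct, and the parts that require no work (projectivity, nonsingularity via smoothness of $\theta$ over the nonsingular $Q_r$, and the dimension count) coincide with the paper's. Where you genuinely diverge is in the two substantive steps. First, the reduction to the fibres: the paper invokes Shafarevich's theorem that a surjection onto an irreducible base with irreducible equidimensional fibres has irreducible source, whereas you use nonsingularity to reduce irreducibility to connectedness and then run a topological argument with the openness of the smooth map $\theta$; your version needs only connectedness of the fibres and no equidimensionality, though as written you should note that it suffices to test fibres over closed points (a nonempty open subset of $\theta(X_1)\cap\theta(X_2)$ contains one), since connectedness of fibres over non-closed points is not what your induction directly supplies. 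Second, and more significantly, the irreducibility of the punctual Quot scheme $\Quot^{0,e}_{E/\mathbb{P}^1/\Bbbk}$: the paper follows Str{\o}mme, exhibiting it as the image of an open subset of the affine space $\Hom(N_{\mathbb{P}^1}(-1),N_{\mathbb{P}^1})\times\Hom(E,N_{\mathbb{P}^1})$, with surjectivity resting on the canonical resolution $0\to H_{\mathbb{P}^1}(-1)\to H_{\mathbb{P}^1}\to T\to 0$ and the vanishing of $\Ext^1(E,H_{\mathbb{P}^1}(-1))$. You instead induct on $e$ via the nested Quot scheme of chains $E\surj T\surj T'$, identifying one forgetful map as the projective bundle $\mathbb{P}(\mathcal{K}^\vee)$ over $\mathbb{P}^1\times\Quot^{0,e-1}_{E/\mathbb{P}^1}$ and the other as a proper surjection onto $\Quot^{0,e}_{E/\mathbb{P}^1}$ (surjective because every nonzero torsion sheaf has a length-one subsheaf). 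Both are sound; the paper's argument is shorter once Str{\o}mme's resolution is granted and gives the parametrization in one stroke, while yours is self-contained on $\mathbb{P}^1$, makes the geometry of the boundary stratification by successive elementary modifications visible, and would generalize to punctual Quot schemes of locally free sheaves on any smooth curve. The points you flag as delicate (local freeness of the universal kernel, the projective-bundle identification) are exactly the right ones and are handled by the flatness of the universal quotient, as you say.
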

\begin{proof}
The nonsingularity, projectivity and dimension counting of $Q_{d,r}$ follow easily from the two facts: (i) $Q_r$ is a smooth and
projective variety of dimension $nr+k(n-k)$; (ii) $Q_{d,r}$ is smooth and projective over $Q_r$ of
relative dimension $k(d-r)$. It only remains to show the irreducibility. Since $Q_r$ is irreducible, by \cite{Sha94}, Ch. 1, Sec. 6, Theorem 8, $Q_{d,r}$ is irreducible if we can show that all the fibers of $\theta$ over the closed points of $Q_r$ are irreducible. The proof of this part is inspired by the proof of Theorem 2.1 in \cite{Str87}.

Let $[V_{\mathbb{P}^1}\surj F]$ be a closed point of $Q_r$. The fiber of $\theta$ over this point is a Quot scheme
$\Quot_{E/\mathbb{P}^1/\Bbbk}^{0,d-r}$ where $E=\ker(V_{\mathbb{P}^1}\surj F)$. Let $N$ be a vector space of dimension $d-r$, and let $W$ be the vector space
\[
W=\Hom(N_{\mathbb{P}^1}(-1),N_{\mathbb{P}^1})\times\Hom(E,N_{\mathbb{P}^1})
\]
Then $\dim W=2(d-r)^2+k(d-r)$. Let $\bar X=\Spec(\bigoplus_{i\geq 0}\Sym^iW^\vee)$ be the associated affine space. There are tautological morphisms $\mu$ and $\nu$ which fit into the diagram on
$\mathbb{P}^1_{\bar X}$:
\[
\CD{
  N_{\mathbb{P}^1_{\bar X}}(-1) \ar[r]^-\nu & N_{\mathbb{P}^1_{\bar X}}\ar@{->>}[r] & \Coker(\nu) \\
  & E_{\bar X} \ar[u]_-\mu &
}
\]
Let $X\subset \bar X$ be the open subvariety defined by the conditions: (i) $\nu$ is injective on each fiber over $X$, and
(ii) the induced map $E_X\to \Coker(\nu)$ is surjective. The sheaf $\Coker(\nu)$ is flat over $X$ with rank 0 and relative degree $d-r$. Thus the surjection $E_X\surj \Coker(\nu)$ gives a morphism $g: X\to \Quot_{E/\mathbb{P}^1/\Bbbk}^{0,d-r}$.

Next we show that $g$ is surjective. Let $[E\surj T]$ be a closed point of $\Quot_{E/\mathbb{P}^1/\Bbbk}^{0,d-r}$, and
$H:=H^0(\mathbb{P}^1,T)$. By Proposition 1.1 in \cite{Str87}, we have a natural exact sequence
\[
0\to H_{\mathbb{P}^1}(-1)\to H_{\mathbb{P}^1}\to T\to 0
\]
Applying $\Hom(E,-)$ to it, we obtain an exact sequence
\[
\cdots\to\Hom(E,H_{\mathbb{P}^1})\to \Hom(E,T)\to \Ext^1(E,H_{\mathbb{P}^1}(-1))\to\cdots
\]
We have
\[
\Ext^1(E,H_{\mathbb{P}^1}(-1))=H^1(H_{\mathbb{P}^1}(-1)\otimes E^\vee)=H^1(E^\vee(-1))\otimes
H_{\mathbb{P}^1}=0
\]
because $H^1(E^\vee(-1))=0$. So the map $\Hom(E,H_{\mathbb{P}^1})\to \Hom(E,T)$ is surjective, and
hence the quotient map $E\surj T$ factors through $H_{\mathbb{P}^1} \to T$ as
\[
\CD{
  H_{\mathbb{P}^1}(-1) \ar[r] & H_{\mathbb{P}^1} \ar[r] & T \\
  & E\ar@{..>}[u]\ar@{->>}[ur] &
}
\]
This diagram gives a point of $X$, whose image under $g$ is the point $[E\surj T]$. Therefore $g$ is surjective and hence $\Quot_{E/\mathbb{P}^1/\Bbbk}^{0,d-r}$ is irreducible since $X$ is irreducible.
\end{proof}

On $\mathbb{P}^1_{Q_{d,r}}$, we have two short exact sequence
\[
0\to \mathcal{E}_{d,r}\to \bar\theta^*\mathcal{E}_r\to \mathcal{T}_{d,r}\to 0, \qquad
0\to\bar\theta^*\mathcal{E}_r\to V_{\mathbb{P}^1_{Q_{d,r}}}\to \bar\theta^*\mathcal{F}_r\to 0
\]
The second is the pullback of the universal exact sequence of $Q_r$ by $\bar\theta$. Let $\mathcal{F}_{d,r}$ be the quotient of $V_{\mathbb{P}^1_{Q_{d,r}}}$ by $\mathcal{E}_{d,r}$ (based on the inclusions $\mathcal{E}_{d,r}\subset \bar\theta^*\mathcal{E}_r\subset\bar\theta^*V_{\mathbb{P}^1_{Q_r}}=V_{\mathbb{P}^1_{Q_{d,r}}}$). Then we can form a
commutative diagram as follows:
\begin{equation}\label{bigDiagram}
\CD{
       &                                     & 0\ar[d]                                        & 0\ar[d]                             & \\
0\ar[r]&\mathcal{E}_{d,r}\ar[r]\ar@{=}[d]&\bar\theta^*\mathcal{E}_{r}\ar[r]\ar[d]    & \mathcal{T}_{d,r}\ar[r]\ar@{..>}[d]     & 0 \\
0\ar[r]&\mathcal{E}_{d,r}\ar[r]          &V_{\mathbb{P}^1_{Q_{d,r}}}\ar[r]\ar[d]    & \mathcal{F}_{d,r}\ar[r]\ar@{..>}[d]     & 0  \\
       &                                     &\bar\theta^*\mathcal{F}_{r}\ar@{=}[r]\ar[d]& \bar\theta^*\mathcal{F}_r\ar[d]& \\
       &                                     &0                                               & 0                                   & \\
}
\end{equation}
where the dotted arrows are induced maps on quotients. All rows and the middle column are exact,
hence the last column is forced to be exact as well. Since $\mathcal{T}_{d,r}$ and
$\bar\theta^*\mathcal{F}_r$ are both flat over $Q_{d,r}$, so is $\mathcal{F}_{d,r}$. Moreover,
$\mathcal{F}_{d,r}$ has rank $n-k$ and relative degree $d$. Thus by the universal property of
$Q_d$, the exact sequence from the middle row determines a morphism
\[
\phi: Q_{d,r}\to Q_d
\]
such that the following diagram commutes
\begin{equation}\label{DiagramQdr}
\CD{
0\ar[r]&\mathcal{E}_{d,r}\ar[r]\ar@{=}[d]&V_{\mathbb{P}^1_{Q_{d,r}}}\ar[r]\ar@{=}[d]&\mathcal{F}_{d,r}\ar[r]\ar[d]^{\simeq} & 0  \\
0\ar[r]&\bar\phi^*\mathcal{E}_d\ar[r]  &\bar\phi^*V_{\mathbb{P}^1_{Q_d}}\ar[r] &\bar\phi^*\mathcal{F}_d\ar[r] &0\\
}
\end{equation}

\begin{prop}\label{IdealEquality}
For $0\leq l\leq d$, $I_{Z_{r,l}}\cdot \mathcal{O}_{Q_{d,r}}=I_{Z_{d,l}}\cdot \mathcal{O}_{Q_{d,r}}$, where
as convention we set $I_{Z_{r,l}}:=0$ for $l\geq r$.
\end{prop}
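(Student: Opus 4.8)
The plan is to rewrite both ideal sheaves appearing in the statement, after pulling back to $Q_{d,r}$, as Fitting ideals of push-forwards of $\EExt^1$-sheaves on $\mathbb{P}^1_{Q_{d,r}}$, and then to compare those Fitting ideals by means of a short exact sequence extracted from Diagram~(\ref{bigDiagram}).

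First I fix $m$ large enough (in particular $m\ge\max\{m_d,m_r\}$ and large enough for all the vanishings used below). By the proof of Proposition~\ref{c2}(2), $I_{Z_{d,l}}$ is, locally on $Q_d$, the $(d-l-1)$-st Fitting ideal of $\pi_*\mathcal{F}_d^\e(m)=\Coker(\rho_{d,m})$, where $\pi_*\mathcal{E}_d^\vee(m)$ is locally free of rank $\widehat m+d$. Since Fitting ideals commute with arbitrary base change and $\phi^*$ is right exact, $I_{Z_{d,l}}\cdot\mathcal{O}_{Q_{d,r}}$ is the $(d-l-1)$-st Fitting ideal of $\Coker(\phi^*\rho_{d,m})$. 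As $\mathcal{E}_d$ and $V_{\mathbb{P}^1_{Q_d}}$ are locally free, cohomology and base change give $\phi^*\pi_*\mathcal{E}_d^\vee(m)=\pi_*\bar\phi^*\mathcal{E}_d^\vee(m)$ and $\phi^*\pi_*V^\vee_{\mathbb{P}^1_{Q_d}}(m)=\pi_*V^\vee_{\mathbb{P}^1_{Q_{d,r}}}(m)$ for $m\gg0$; under these identifications $\phi^*\rho_{d,m}$ equals $\pi_*$ of $\HHom(\bar\phi^*(\mathcal{E}_d\embed V_{\mathbb{P}^1_{Q_d}}),\mathcal{O}(m))$, and by Diagram~(\ref{DiagramQdr}) the inclusion $\bar\phi^*(\mathcal{E}_d\embed V_{\mathbb{P}^1_{Q_d}})$ is identified with $\mathcal{E}_{d,r}\embed V_{\mathbb{P}^1_{Q_{d,r}}}$, with cokernel $\mathcal{F}_{d,r}$. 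Running the argument of the proof of Proposition~\ref{c2}(2) verbatim on $Q_{d,r}$ for this inclusion yields $\Coker(\phi^*\rho_{d,m})=\pi_*\mathcal{F}_{d,r}^\e(m)$, whence
\[
I_{Z_{d,l}}\cdot\mathcal{O}_{Q_{d,r}}=\mathrm{Fitt}_{d-l-1}\bigl(\pi_*\mathcal{F}_{d,r}^\e(m)\bigr).
\]
Applying the same discussion to $\theta\colon Q_{d,r}\to Q_r$ and to the pullback $0\to\bar\theta^*\mathcal{E}_r\to V_{\mathbb{P}^1_{Q_{d,r}}}\to\bar\theta^*\mathcal{F}_r\to 0$ of the universal sequence of $Q_r$ (using Proposition~\ref{Pullback-Ext} to write $(\bar\theta^*\mathcal{F}_r)^\e=\bar\theta^*\mathcal{F}_r^\e$) gives
\[
I_{Z_{r,l}}\cdot\mathcal{O}_{Q_{d,r}}=\mathrm{Fitt}_{r-l-1}\bigl(\pi_*\bar\theta^*\mathcal{F}_r^\e(m)\bigr),
\]
where a Fitting ideal of negative index is interpreted as $0$; this is consistent with the convention $I_{Z_{r,l}}:=0$ for $l\ge r$.

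Next I relate the two right-hand sides. Apply $\EExt^\bullet_{\mathbb{P}^1_{Q_{d,r}}}(-,\mathcal{O})$ to the right column $0\to\mathcal{T}_{d,r}\to\mathcal{F}_{d,r}\to\bar\theta^*\mathcal{F}_r\to 0$ of Diagram~(\ref{bigDiagram}). Each of $\mathcal{T}_{d,r}$, $\mathcal{F}_{d,r}$ and $\bar\theta^*\mathcal{F}_r$ has a length-one locally free resolution on $\mathbb{P}^1_{Q_{d,r}}$ (respectively $0\to\mathcal{E}_{d,r}\to\bar\theta^*\mathcal{E}_r\to\mathcal{T}_{d,r}\to0$, $0\to\mathcal{E}_{d,r}\to V_{\mathbb{P}^1_{Q_{d,r}}}\to\mathcal{F}_{d,r}\to0$, $0\to\bar\theta^*\mathcal{E}_r\to V_{\mathbb{P}^1_{Q_{d,r}}}\to\bar\theta^*\mathcal{F}_r\to0$), so all their $\EExt^{\ge2}(-,\mathcal{O})$ vanish; moreover $\mathcal{T}_{d,r}^\vee=0$, because dualizing the first of these resolutions and invoking Proposition~\ref{FlatTorsion}(1) shows that $(\bar\theta^*\mathcal{E}_r)^\vee\to\mathcal{E}_{d,r}^\vee$ is injective. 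Hence the long exact $\EExt$-sequence degenerates to a short exact sequence
\[
0\to\bar\theta^*\mathcal{F}_r^\e\to\mathcal{F}_{d,r}^\e\to\mathcal{T}_{d,r}^\e\to 0
\]
on $\mathbb{P}^1_{Q_{d,r}}$. Twisting by $\mathcal{O}(m)$ and applying $\pi_*$, relative Serre vanishing gives $R^1\pi_*(\bar\theta^*\mathcal{F}_r^\e(m))=0$ for $m\gg0$, so we obtain a short exact sequence of $\mathcal{O}_{Q_{d,r}}$-modules
\[
0\to\pi_*\bar\theta^*\mathcal{F}_r^\e(m)\to\pi_*\mathcal{F}_{d,r}^\e(m)\to\pi_*\mathcal{T}_{d,r}^\e(m)\to 0,
\]
in which $\pi_*\mathcal{T}_{d,r}^\e(m)$ is locally free of rank $d-r$ (by Proposition~\ref{FlatTorsion}(1), $\mathcal{T}_{d,r}^\e$ is torsion and flat over $Q_{d,r}$ of relative degree $d-r$).

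For any short exact sequence $0\to A\to B\to C\to 0$ of modules over a ring with $C$ locally free of rank $c$, one has $\mathrm{Fitt}_j(B)=\mathrm{Fitt}_{j-c}(A)$ for every $j$ (the sequence splits locally, and the claim follows from the standard formula for the Fitting ideals of a direct sum together with the monotonicity $\mathrm{Fitt}_0\subseteq\mathrm{Fitt}_1\subseteq\cdots$). Taking $c=d-r$ and $j=d-l-1$ gives $\mathrm{Fitt}_{d-l-1}(\pi_*\mathcal{F}_{d,r}^\e(m))=\mathrm{Fitt}_{r-l-1}(\pi_*\bar\theta^*\mathcal{F}_r^\e(m))$, and combining this with the two displayed identities of the first step yields $I_{Z_{d,l}}\cdot\mathcal{O}_{Q_{d,r}}=I_{Z_{r,l}}\cdot\mathcal{O}_{Q_{d,r}}$. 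The main obstacle is the construction of the short exact sequence $0\to\bar\theta^*\mathcal{F}_r^\e\to\mathcal{F}_{d,r}^\e\to\mathcal{T}_{d,r}^\e\to0$ together with the observation that the Fitting-index shift there is by exactly $\rank\pi_*\mathcal{T}_{d,r}^\e(m)=d-r$, which is precisely the gap between the indices $d-l-1$ and $r-l-1$; everything else is routine bookkeeping with the base-change identifications $\phi^*\pi_*=\pi_*\bar\phi^*$ and $\theta^*\pi_*=\pi_*\bar\theta^*$, the only delicate point being that $\mathcal{F}_d^\e$ need not be flat over $Q_d$, so in the first step one pulls back the presentation $\pi_*V^\vee(m)\to\pi_*\mathcal{E}_d^\vee(m)\to\pi_*\mathcal{F}_d^\e(m)\to0$ rather than the sheaf $\pi_*\mathcal{F}_d^\e$ itself.
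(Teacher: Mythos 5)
Your proof is correct, and it reaches the conclusion by a genuinely different mechanism than the paper, although both rest on the same geometric input. The paper never passes to Fitting ideals here: it keeps both ideals realized as images of the maps $\HHom(\bigwedge^{\widehat m+l+1}\pi_*V^\vee_{\mathbb{P}^1_{Q_{d,r}}}(m),\bigwedge^{\widehat m+l+1}(-))^\vee\to\mathcal{O}_{Q_{d,r}}$ for the two targets $\pi_*\bar\theta^*\mathcal{E}_r^\vee(m)$ and $\pi_*\mathcal{E}_{d,r}^\vee(m)$, observes that $0\to\pi_*\bar\theta^*\mathcal{E}_r^\vee(m)\to\pi_*\mathcal{E}_{d,r}^\vee(m)\to\pi_*\mathcal{T}_{d,r}^\e(m)\to 0$ is a subbundle sequence (via Proposition~\ref{FlatTorsion}), deduces that the induced map of exterior powers is again a subbundle map whose dual is therefore surjective, and concludes that the two images in $\mathcal{O}_{Q_{d,r}}$ coincide. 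You instead push everything to the cokernel side: you identify $I_{Z_{d,l}}\cdot\mathcal{O}_{Q_{d,r}}$ and $I_{Z_{r,l}}\cdot\mathcal{O}_{Q_{d,r}}$ with Fitting ideals of $\pi_*\mathcal{F}_{d,r}^\e(m)$ and $\pi_*\bar\theta^*\mathcal{F}_r^\e(m)$ (legitimate, since the paper's own proof of Proposition~\ref{c2} already exhibits $I_{d,l,m}$ locally as a Fitting ideal, and Fitting ideals commute with base change), extract the sequence $0\to\bar\theta^*\mathcal{F}_r^\e\to\mathcal{F}_{d,r}^\e\to\mathcal{T}_{d,r}^\e\to 0$ from the last column of Diagram~(\ref{bigDiagram}), and invoke the shift $\mathrm{Fitt}_j(B)=\mathrm{Fitt}_{j-c}(A)$ for an extension of a rank-$c$ locally free module; the index gap $(d-l-1)-(r-l-1)=d-r$ matches $\rank\pi_*\mathcal{T}_{d,r}^\e(m)$ exactly. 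The two sequences you and the paper use are related by the snake lemma applied to the map of presentations, so the essential content --- flatness of $\mathcal{T}_{d,r}^\e$ of relative degree $d-r$ --- is identical; what your route buys is that it makes the numerology transparent and handles the degenerate case $l\geq r$ automatically (negative Fitting index), at the cost of importing the direct-sum formula for Fitting ideals, whereas the paper's image-of-a-surjection argument is self-contained given its setup. Your identification of $\Coker(\phi^*\rho_{d,m})$ via the pulled-back presentation rather than the non-flat sheaf $\pi_*\mathcal{F}_d^\e(m)$ itself is exactly the right precaution.
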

\begin{proof}
Fix an integer $m\gg 0$. By the definition of $I_{Z_{d,l}}$, we have
\[
\HHom\big(\bigwedge^{\widehat m+l+1}\pi_*V_{\mathbb{P}^1_{Q_d}}^\vee(m),\bigwedge^{\widehat
m+l+1}\pi_*\mathcal{E}_d^\vee(m)\big)^\vee\surj I_{Z_{d,l}}\subset \mathcal{O}_{Q_d}
\]
for all $m\gg 0$. Apply $\phi^*$ to obtain
\[
\phi^*\HHom\bigg(\bigwedge^{\widehat m+l+1}\pi_*V_{\mathbb{P}^1_{Q_d}}^\vee(m),\bigwedge^{\widehat
m+l+1}\pi_*\mathcal{E}_d^\vee(m)\bigg)^\vee\surj \phi^*I_{Z_{d,l}}\to \mathcal{O}_{Q_{d,r}}
\]
Thus, $I_{Z_{d,l}}\cdot \mathcal{O}_{Q_{d,r}}$, the image of $\phi^*I_{Z_{d,l}}\to
\mathcal{O}_{Q_{d,r}}$, is also the image of the induced homomorphism
\[
\phi^*\HHom\bigg(\bigwedge^{\widehat m+l+1}\pi_*V_{\mathbb{P}^1_{Q_d}}^\vee(m),\bigwedge^{\widehat
m+l+1}\pi_*\mathcal{E}_d^\vee(m)\bigg)^\vee\to\mathcal{O}_{Q_{d,r}}
\]
By the same argument, $I_{Z_{r,l}}\cdot \mathcal{O}_{Q_{d,r}}$ is the image of
\[
\theta^*\HHom\bigg(\bigwedge^{\widehat m+l+1}\pi_*V_{\mathbb{P}^1_{Q_r}}^\vee(m),\bigwedge^{\widehat
m+l+1}\pi_*\mathcal{E}_r^\vee(m)\bigg)^\vee\to \mathcal{O}_{Q_{d,r}}
\]
Note that since $m\gg 0$, we have
\[
\begin{split}
&\phi^*\HHom(\bigwedge^{\widehat m+l+1}\pi_*V_{\mathbb{P}^1_{Q_d}}^\vee(m),\bigwedge^{\widehat m+l+1}\pi_*\mathcal{E}_d^\vee(m))^\vee =\HHom\big(\bigwedge^{\widehat m+l+1}\phi^*\pi_*V_{\mathbb{P}^1_{Q_d}}^\vee(m),\bigwedge^{\widehat m+l+1}\phi^*\pi_*\mathcal{E}_d^\vee(m)\big)^\vee= \\
& \HHom\big(\bigwedge^{\widehat m+l+1}\pi_*\bar\phi^*V_{\mathbb{P}^1_{Q_d}}^\vee(m),\bigwedge^{\widehat m+l+1}\pi_*\bar\phi^*\mathcal{E}_d^\vee(m)\big)^\vee  =\HHom\big(\bigwedge^{\widehat m+l+1}\pi_*V_{\mathbb{P}^1_{Q_{d,r}}}^\vee(m),\bigwedge^{\widehat
m+l+1}\pi_*\mathcal{E}_{d,r}^\vee(m)\big)^\vee
\end{split}
\]
and
\[
\begin{split}
&\theta^*\HHom\big(\bigwedge^{\widehat m+l+1}\pi_*V_{\mathbb{P}^1_{Q_r}}^\vee(m),\bigwedge^{\widehat
m+l+1}\pi_*\mathcal{E}_r^\vee(m)\big)^\vee =\HHom\big(\bigwedge^{\widehat m+l+1}\theta^*\pi_*V_{\mathbb{P}^1_{Q_r}}^\vee(m),\bigwedge^{\widehat
m+l+1}\theta^*\pi_*\mathcal{E}_r^\vee(m)\big)^\vee= \\
&\HHom\big(\bigwedge^{\widehat m+l+1}\pi_*\bar\theta^*V_{\mathbb{P}^1_{Q_r}}^\vee(m),\bigwedge^{\widehat m+l+1}\pi_*\bar\theta^*\mathcal{E}_r^\vee(m)\big)^\vee =\HHom\big(\bigwedge^{\widehat m+l+1}\pi_*V_{\mathbb{P}^1_{Q_{d,r}}}^\vee(m),\bigwedge^{\widehat
m+l+1}\pi_*\bar\theta^*\mathcal{E}_r^\vee(m)\big)^\vee
\end{split}
\]
Dualizing the universal exact sequence (\ref{UniversalExactSequence2}) and twisting $m$ times, we
obtain a short exact sequence
\[
0\to \bar\theta^*\mathcal{E}_r^\vee(m)\to \mathcal{E}_{d,r}^\vee(m)\to \mathcal{T}_{d,r}^\e(m)\to 0
\]
Then applying $\pi_*$ we obtain
\[
0\to \pi_*\bar\theta^*\mathcal{E}_r^\vee(m)\to \pi_*\mathcal{E}_{d,r}^\vee(m)\to
\pi_*\mathcal{T}_{d,r}^\e(m)\to 0
\]
which is exact for $m\gg 0$. Moreover, by Proposition \ref{FlatTorsion}, $\mathcal{T}_{d,r}^\e$ is
flat over $Q_{d,r}$. So $\pi_*\mathcal{T}_{d,r}^\e(m)$ is locally free and so is $\pi_*\mathcal{E}_{d,r}^\vee(m)$ for $m\gg0$. So we have a subbundle homomorphism
\[
\bigwedge^{\widehat m+l+1}\pi_*\bar\theta^*\mathcal{E}_r^\vee(m)\to \bigwedge^{\widehat
m+l+1}\pi_*\mathcal{E}_{d,r}^\vee(m)
\]
and hence an induced subbundle homomorpism
\[
 \HHom\big(\bigwedge^{\widehat m+l+1}\pi_*V_{\mathbb{P}^1_{Q_{d,r}}}^\vee(m), \bigwedge^{\widehat
m+l+1}\pi_*\bar\theta^*\mathcal{E}_r^\vee(m)\big)\to \HHom\big(\bigwedge^{\widehat m+l+1}\pi_*V_{\mathbb{P}^1_{Q_{d,r}}}^\vee(m),\bigwedge^{\widehat
m+l+1}\pi_*\mathcal{E}_{d,r}^\vee(m)\big)
\]
Taking dual, we obtain a quotient bundle map
\[
\ds\HHom\big(\bigwedge^{\widehat m+l+1}\pi_*V_{\mathbb{P}^1_{Q_{d,r}}}^\vee(m),\bigwedge^{\widehat
m+l+1}\pi_*\mathcal{E}_{d,r}^\vee(m)\big)^\vee \surj \HHom\big(\bigwedge^{\widehat m+l+1}\pi_*V_{\mathbb{P}^1_{Q_{d,r}}}^\vee(m),\bigwedge^{\widehat m+l+1}\pi_*\bar\theta^*\mathcal{E}_r^\vee(m)\big)^\vee
\]
which yields a commutative diagram as follows.
\[
\CD{
  \phi^*\HHom\bigg(\ds\bigwedge^{\widehat m+l+1}\pi_*V_{\mathbb{P}^1_{Q_d}}^\vee(m),\bigwedge^{\widehat
  m+l+1}\pi_*\mathcal{E}_d^\vee(m)\bigg)^\vee \ar[r]\ar@{->>}[d]& \mathcal{O}_{Q_{d,r}}\ar@{=}[d]\\
  \theta^*\HHom\bigg(\ds\bigwedge^{\widehat m+l+1}\pi_*V_{\mathbb{P}^1_{Q_r}}^\vee(m),\bigwedge^{\widehat
  m+l+1}\pi_*\mathcal{E}_r^\vee(m)\bigg)^\vee\ar[r] & \mathcal{O}_{Q_{d,r}}
}
\]
Thus, $I_{Z_{d,l}}\cdot\mathcal{O}_{Q_{d,r}}$, the image of $\phi^*\HHom\big(\bigwedge^{\widehat
m+l+1}\pi_*V_{\mathbb{P}^1_{Q_d}}^\vee(m),\bigwedge^{\widehat
m+l+1}\pi_*\mathcal{E}_d^\vee(m)\big)^\vee$ in $\mathcal{O}_{Q_{d,r}}$, is equal to
$I_{Z_{r,l}}\cdot\mathcal{O}_{Q_{d,r}}$, the image of $\theta^*\HHom\big(\bigwedge^{\widehat
m+l+1}\pi_*V_{\mathbb{P}^1_{Q_r}}^\vee(m),\bigwedge^{\widehat m+l+1}\pi_*$
$\mathcal{E}_r^\vee(m)\big)^\vee$ in $\mathcal{O}_{Q_{d,r}}$.
\end{proof}

\begin{cor}
For $0\leq l\leq r$, the restriction of $\phi$ to the subscheme
$Q_{d,r}\times_{Q_r}Z_{r,l}=\theta^{-1}(Z_{r,l})$ of $Q_{d,r}$ factors through the inclusion
$Z_{d,l}\hookrightarrow{Q_d}$. In particular, $\phi$ factors through the inclusion
$Z_{d,r}\hookrightarrow Q_d$.
\end{cor}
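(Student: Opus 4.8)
The plan is to read off both assertions directly from Proposition \ref{IdealEquality} via the universal property of closed immersions: for a closed subscheme $Z$ of a scheme $X$ with ideal sheaf $I_Z$, a morphism $g\colon Y\to X$ factors (uniquely) through $Z\hookrightarrow X$ if and only if the image of $g^*I_Z\to\mathcal{O}_Y$ is zero.

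First I would make the scheme structures explicit. By definition of the fibre product over the closed immersion $Z_{r,l}\hookrightarrow Q_r$, the subscheme $Q_{d,r}\times_{Q_r}Z_{r,l}=\theta^{-1}(Z_{r,l})$ of $Q_{d,r}$ is cut out by the ideal sheaf $I_{Z_{r,l}}\cdot\mathcal{O}_{Q_{d,r}}$, i.e.\ the image of $\theta^*I_{Z_{r,l}}\to\mathcal{O}_{Q_{d,r}}$. Next I would apply the universal property above to the composite $g\colon\theta^{-1}(Z_{r,l})\hookrightarrow Q_{d,r}\stackrel{\phi}{\to}Q_d$, where $\phi$ is the morphism from Diagram (\ref{DiagramQdr}). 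Since pullback along $g$ is pullback along $\phi$ followed by restriction to $\theta^{-1}(Z_{r,l})$, and restriction to a closed subscheme is right exact, the image of $g^*I_{Z_{d,l}}\to\mathcal{O}_{\theta^{-1}(Z_{r,l})}$ equals the image of the ideal $I_{Z_{d,l}}\cdot\mathcal{O}_{Q_{d,r}}$ in $\mathcal{O}_{\theta^{-1}(Z_{r,l})}$. By Proposition \ref{IdealEquality} we have $I_{Z_{d,l}}\cdot\mathcal{O}_{Q_{d,r}}=I_{Z_{r,l}}\cdot\mathcal{O}_{Q_{d,r}}$, and the right-hand side is precisely the ideal sheaf of $\theta^{-1}(Z_{r,l})$ in $Q_{d,r}$; hence its image in $\mathcal{O}_{\theta^{-1}(Z_{r,l})}$ vanishes. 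By the universal property, $g$ factors through $Z_{d,l}\hookrightarrow Q_d$, which is the first assertion.

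For the ``in particular'' statement I would simply take $l=r$. Under the conventions $Z_{r,r}=Q_r$ and $I_{Z_{r,r}}=0$, one has $\theta^{-1}(Z_{r,r})=Q_{d,r}$ and the restriction of $\phi$ is $\phi$ itself, so the first assertion gives the claim; equivalently, Proposition \ref{IdealEquality} yields $I_{Z_{d,r}}\cdot\mathcal{O}_{Q_{d,r}}=0$, i.e.\ $\phi^*I_{Z_{d,r}}\to\mathcal{O}_{Q_{d,r}}$ is the zero map, whence $\phi$ factors through $Z_{d,r}\hookrightarrow Q_d$. I do not anticipate any real obstacle here: Proposition \ref{IdealEquality} carries all the weight, and the only points requiring care are the translation of ``factors through'' into the vanishing of a pullback ideal and the bookkeeping of the degenerate conventions at $l=r$.
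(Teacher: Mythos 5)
Your proposal is correct and is exactly the intended deduction: the paper states this as an immediate corollary of Proposition \ref{IdealEquality} with no written proof, and your argument (factoring through a closed subscheme is equivalent to vanishing of the pulled-back ideal, combined with the identity $I_{Z_{d,l}}\cdot\mathcal{O}_{Q_{d,r}}=I_{Z_{r,l}}\cdot\mathcal{O}_{Q_{d,r}}$ and the convention $I_{Z_{r,l}}=0$ for $l\geq r$) supplies precisely the omitted details.
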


We denote by $\varphi: Q_{d,r}\to Z_{d,r}$ the morphism factored out from $\phi: Q_{d,r}\to Q_d$.

\begin{prop}
The morphism $\varphi$ is surjective.
\end{prop}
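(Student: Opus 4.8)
The plan is to reduce to a statement about closed points and then construct, for each closed point of $Z_{d,r}$, an explicit preimage under $\varphi$. Since $Q_{d,r}$ and $Z_{d,r}$ are both projective over $\Bbbk$, the morphism $\varphi$ is proper, so $\varphi(Q_{d,r})$ is closed in $Z_{d,r}$; and since $Z_{d,r}$ is of finite type over $\Bbbk$, hence Jacobson, it suffices to show that every closed point of $Z_{d,r}$ lies in the image of $\varphi$. So from now on I fix a closed point $z=[V_{\mathbb{P}^1}\surj F]$ of $Z_{d,r}$ and set $E:=\ker(V_{\mathbb{P}^1}\surj F)$, locally free of rank $k$ and degree $-d$; recalling that $|Z_{d,r}|=C_{d,r}$, we have $\deg F_\tor\geq d-r$.

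Next I would build the point $p\in Q_{d,r}$. A torsion sheaf on $\mathbb{P}^1$ of length $N$ has subsheaves of every length $0\leq N'\leq N$, so I can choose a torsion subsheaf $S\subseteq F_\tor\subseteq F$ with $\deg S=d-r$. Let $E'\subseteq V_{\mathbb{P}^1}$ be the preimage of $S$ under $V_{\mathbb{P}^1}\surj F$. Then $E\subseteq E'$ with $E'/E\cong S$ torsion of degree $d-r$, and $E'$, being a torsion-free subsheaf of $V_{\mathbb{P}^1}$ on the smooth curve $\mathbb{P}^1$, is locally free of rank $k$ and degree $-r$; also $V_{\mathbb{P}^1}/E'\cong F/S$ has rank $n-k$ and degree $r$. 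Hence $q:=[V_{\mathbb{P}^1}\surj F/S]$ is a closed point of $Q_r$, with $\mathcal{E}_{r,q}=E'$. The torsion quotient $E'\surj E'/E$ of degree $d-r$ then represents, via the base-change property of the Quot scheme, a closed point $p$ of the fiber $\theta^{-1}(q)=\Quot^{0,d-r}_{E'/\mathbb{P}^1/\Bbbk}\subseteq Q_{d,r}$.

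It then remains to verify that $\phi(p)=z$, which I would do by restricting the universal sequences to the fiber $\mathbb{P}^1_p$. Restricting (\ref{UniversalExactSequence2}) to $\mathbb{P}^1_p$ stays exact because $\mathcal{T}_{d,r}$ is flat over $Q_{d,r}$; since $(\bar\theta^*\mathcal{E}_r)|_{\mathbb{P}^1_p}=\mathcal{E}_{r,q}=E'$ and $\mathcal{T}_{d,r}|_{\mathbb{P}^1_p}\cong E'/E$ (the quotient represented by $p$), this identifies $\mathcal{E}_{d,r}|_{\mathbb{P}^1_p}$ with $\ker(E'\surj E'/E)=E$, sitting inside $V_{\mathbb{P}^1}$ via the original inclusion $E\embed V_{\mathbb{P}^1}$ (because $\mathcal{E}_{d,r}\embed V_{\mathbb{P}^1_{Q_{d,r}}}$ in (\ref{bigDiagram}) is the composite $\mathcal{E}_{d,r}\embed\bar\theta^*\mathcal{E}_r\embed V_{\mathbb{P}^1_{Q_{d,r}}}$, which at $p$ is $E\embed E'\embed V_{\mathbb{P}^1}$). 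Restricting the middle row of (\ref{bigDiagram}) to $\mathbb{P}^1_p$ (exact, since $\mathcal{F}_{d,r}$ is flat over $Q_{d,r}$, as observed right after that diagram) then gives $0\to E\to V_{\mathbb{P}^1}\to\mathcal{F}_{d,r}|_{\mathbb{P}^1_p}\to 0$, so $\mathcal{F}_{d,r}|_{\mathbb{P}^1_p}\cong V_{\mathbb{P}^1}/E=F$ compatibly with the surjections from $V_{\mathbb{P}^1}$. By the commutative diagram (\ref{DiagramQdr}) and the fine-moduli property of $Q_d$, $\phi(p)$ is the point $[V_{\mathbb{P}^1}\surj F]=z$; since $\phi$ factors through $\varphi$, we get $\varphi(p)=z$, as wanted.

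The hard part is the diagram chase in the last step: one must make sure the fiberwise restrictions of (\ref{UniversalExactSequence2}) and of the middle row of (\ref{bigDiagram}) reproduce exactly the pair $(E\embed V_{\mathbb{P}^1},\,F)$, and this relies on the flatness of $\mathcal{T}_{d,r}$ and of $\mathcal{F}_{d,r}$ over $Q_{d,r}$ together with the compatibility of the inclusions appearing in (\ref{bigDiagram}). The existence of $S$, the rank/degree arithmetic, and the reduction to closed points are all routine.
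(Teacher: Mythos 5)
Your proof is correct and follows essentially the same route as the paper's: both choose a torsion subsheaf of degree $d-r$ inside the torsion part of the quotient at the given point, form the intermediate kernel (your $E'$, the paper's $E$) to produce a point of $Q_r$ and then of the fiber of $\theta$, and check that $\varphi$ sends it back to the given point. The only differences are cosmetic: you add an explicit reduction to closed points (the paper works with an arbitrary point over its residue field) and you spell out the fiberwise diagram chase that the paper compresses into ``one checks that $\varphi(y)=q$.''
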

\begin{proof}
Let $q\in Z_{d,r}$. Then $\deg(\mathcal{F}_{d,q})_{\rm tor}\ge d-r$. Let
$T\subset\mathcal{F}_{d,q}$ be a torsion subsheaf of degree $d-r$, let $F=\mathcal{F}_{d,q}/T$, and
let $E=\ker(V_{\mathbb{P}^1_q}\to \mathcal{F}_{d,q}\to F)$. Then we form the following commutative
diagram with exact rows and exact columns
\[
\CD{
       &                                       & 0\ar[d]                         & 0\ar[d]                             & \\
0\ar[r]&\mathcal{E}_{d,q}\ar@{..>}[r]\ar@{=}[d]&E\ar@{..>}[r]\ar[d]    & T\ar[r]\ar[d] & 0 \\
0\ar[r]&\mathcal{E}_{d,q}\ar[r]          &V_{\mathbb{P}^1_q}\ar[r]\ar[d]   & \mathcal{F}_{d,q}\ar[r]\ar[d] & 0  \\
       &                                       &F\ar@{=}[r]\ar[d]& F\ar[d]             & \\
       &                                       &0                                & 0                                   & \\
}
\]
The middle column represents a point $x\in Q_r$ since $\deg F=r$, and the first row corresponds
a point $y\in Q_{d,r}$, which is on the fiber $\theta^{-1}(x)$ over $x$. One checks that $\varphi(y)=q$. Thus
$\varphi$ is surjective.
\end{proof}

We put $\mathring{Q}_{d,r}:=Q_{d,r}\times_{Q_r}\mathring{Q}_r=\theta^{-1}(\mathring{Q}_r)\subset Q_{d,r}$,
and write $\phi_0:\mathring{Q}_{d,r}\to Q_d$ for the restriction of $\phi$ to $\mathring{Q}_{d,r}$.

\begin{prop}
The map $\phi_0$ factors through the inclusion $\mathring{Z}_{d,r}\hookrightarrow Q_d$.
\end{prop}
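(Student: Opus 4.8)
The plan is to invoke the universal property of the inclusion $i:\mathring Z_{d,r}\hookrightarrow Q_d$ recorded in Theorem \ref{thmZ}(2): to show that $\phi_0$ factors through $i$ it suffices to prove that $\bar\phi_0^*\mathcal F_d^\varepsilon$ is flat over $\mathring Q_{d,r}$ with relative degree $d-r$ (note $\mathring Q_{d,r}$ is noetherian, being open in the projective variety $Q_{d,r}$). So the whole argument reduces to identifying and analyzing the sheaf $\bar\phi_0^*\mathcal F_d^\varepsilon$ on $\mathbb P^1_{\mathring Q_{d,r}}$.

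First I would identify this sheaf with an $\varepsilon$-sheaf coming from the Quot scheme $Q_{d,r}$. Diagram (\ref{DiagramQdr}) gives an isomorphism $\bar\phi^*\mathcal F_d\simeq\mathcal F_{d,r}$ compatible with the two presentations of $V_{\mathbb P^1_{Q_{d,r}}}$ as source; since $\mathcal F_d$ has the locally free presentation $0\to\mathcal E_d\to V_{\mathbb P^1_{Q_d}}\to\mathcal F_d\to 0$ from (\ref{UES-Qd}), Proposition \ref{Pullback-Ext} yields $\bar\phi^*\mathcal F_d^\varepsilon\simeq\mathcal F_{d,r}^\varepsilon$. Because $\bar\phi_0$ is the composite of the open immersion $\mathbb P^1_{\mathring Q_{d,r}}\hookrightarrow\mathbb P^1_{Q_{d,r}}$ with $\bar\phi$, restricting gives $\bar\phi_0^*\mathcal F_d^\varepsilon\simeq\mathcal F_{d,r}^\varepsilon|_{\mathbb P^1_{\mathring Q_{d,r}}}$, which by Proposition \ref{Pullback-Ext} again equals $(\mathcal F_{d,r}|_{\mathbb P^1_{\mathring Q_{d,r}}})^\varepsilon$.

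Next I would compute $\mathcal F_{d,r}^\varepsilon$ over $\mathbb P^1_{\mathring Q_{d,r}}$ using the right-hand column of Diagram (\ref{bigDiagram}), the short exact sequence $0\to\mathcal T_{d,r}\to\mathcal F_{d,r}\to\bar\theta^*\mathcal F_r\to 0$. The crucial observation is that $\mathring Q_{d,r}=\theta^{-1}(\mathring Q_r)$, so $\bar\theta$ restricts to a morphism $\mathbb P^1_{\mathring Q_{d,r}}\to\mathbb P^1_{\mathring Q_r}$ and $\mathcal F_r$ is locally free on $\mathbb P^1_{\mathring Q_r}$ (this is the defining property of $\mathring Q_r$); hence $(\bar\theta^*\mathcal F_r)|_{\mathbb P^1_{\mathring Q_{d,r}}}$ is locally free. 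Restricting the sequence to $\mathbb P^1_{\mathring Q_{d,r}}$ and applying $\EExt^\bullet_{\mathbb P^1_{\mathring Q_{d,r}}}(-,\mathcal O)$, the locally free term has vanishing higher $\EExt$, and $\HHom$ from the torsion sheaf $\mathcal T_{d,r}$ to $\mathcal O$ vanishes, so the long exact sequence collapses to an isomorphism $\mathcal F_{d,r}^\varepsilon|_{\mathbb P^1_{\mathring Q_{d,r}}}\simeq\mathcal T_{d,r}^\varepsilon|_{\mathbb P^1_{\mathring Q_{d,r}}}$. Now $\mathcal T_{d,r}$ is torsion and flat over $Q_{d,r}$ of relative degree $d-r$, so by Proposition \ref{FlatTorsion}(1) $\mathcal T_{d,r}^\varepsilon$ is torsion and flat over $Q_{d,r}$ of relative degree $d-r$; flatness and relative degree are preserved under restriction to the open subscheme $\mathring Q_{d,r}$. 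Chaining the isomorphisms, $\bar\phi_0^*\mathcal F_d^\varepsilon$ is flat over $\mathring Q_{d,r}$ with relative degree $d-r$, and Theorem \ref{thmZ}(2) finishes the proof.

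The only step that is not purely formal is the collapse of the $\EExt$ long exact sequence, and within it the vanishing $\HHom_{\mathbb P^1_{\mathring Q_{d,r}}}(\mathcal T_{d,r},\mathcal O)=0$. For this I would use that $\mathring Q_{d,r}$ is integral, being open in the irreducible nonsingular variety $Q_{d,r}$, hence $\mathbb P^1_{\mathring Q_{d,r}}$ is integral; since $\mathcal T_{d,r}$ is torsion (zero at the generic point), every stalk is a torsion module over a domain and admits no nonzero homomorphism to that domain, so the sheaf $\HHom$ vanishes. Everything else — the compatibility of the isomorphism $\bar\phi^*\mathcal F_d\simeq\mathcal F_{d,r}$ with restriction, and the matching of presentations needed to apply Proposition \ref{Pullback-Ext} — is routine bookkeeping.
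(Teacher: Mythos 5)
Your proposal is correct and follows essentially the same route as the paper: reduce to the universal property of Theorem \ref{thmZ}(2), identify $\bar\phi_0^*\mathcal{F}_d^\varepsilon$ with $(\mathcal{F}_{d,r})^\varepsilon_{\mathring{Q}_{d,r}}$, use the last column of Diagram (\ref{bigDiagram}) together with the vanishing of $(\mathcal{F}_r)^\varepsilon_{\mathring{Q}_r}$ to get $(\mathcal{F}_{d,r})^\varepsilon_{\mathring{Q}_{d,r}}\simeq(\mathcal{T}_{d,r})^\varepsilon_{\mathring{Q}_{d,r}}$, and conclude by Proposition \ref{FlatTorsion}. (The only cosmetic difference is that the collapse of the $\EExt$ sequence needs only $\EExt^{\geq 1}(\bar\theta^*\mathcal{F}_r,\mathcal{O})=0$; the vanishing of $\HHom(\mathcal{T}_{d,r},\mathcal{O})$ you argue for is true but not required.)
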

\begin{proof}
By the universal property of $\mathring{Z}_{d,r}$ (Theorem \ref{thmZ}), we only need to show that
$\bar\phi_0^*\mathcal{F}_d^\e$ is flat over $\mathring{Q}_{d,r}$ with relative degree $d-r$. Since
we have
\[
\bar\phi_0^*\mathcal{F}_d^\e=(\bar
\phi^*\mathcal{F}_d)^\e_{\mathring{Q}_{d,r}}\simeq (\mathcal{F}_{d,r})_{\mathring{Q}_{d,r}}^\e
\]
we only need to show that $(\mathcal{F}_{d,r})_{\mathring{Q}_{d,r}}^\e$ is flat over
$\mathring{Q}_{d,r}$ with relative degree $d-r$.

Note that we have a short exact sequence from the last column of the diagram (\ref{bigDiagram}):
\[
0\to \mathcal{T}_{d,r}\to \mathcal{F}_{d,r}\to \bar\theta^*\mathcal{F}_r \to 0
\]
Restricting it to $\mathbb{P}^1_{\mathring{Q}_{d,r}}$ yields an exact sequence
\[
0\to (\mathcal{T}_{d,r})_{\mathring{Q}_{d,r}}\to (\mathcal{F}_{d,r})_{\mathring{Q}_{d,r}}\to
\bar\theta^*(\mathcal{F}_r)_{\mathring{Q}_r} \to 0
\]
Then dualizing the sequence, we obtain a an exact sequence
\[
\cdots \to \bar\theta^*(\mathcal{F}_r)_{\mathring{Q}_r}^\e \to
(\mathcal{F}_{d,r})_{\mathring{Q}_{d,r}}^\e \to (\mathcal{T}_{d,r})_{\mathring{Q}_{d,r}}^\e \to 0
\]
Since $(\mathcal{F}_r)_{\mathring{Q}_r}$ is locally free, we have
$(\mathcal{F}_r)_{\mathring{Q}_r}^\e=0$, and hence
\begin{equation}\label{e4}
(\mathcal{F}_{d,r})_{\mathring{Q}_{d,r}}^\e \simeq (\mathcal{T}_{d,r})_{\mathring{Q}_{d,r}}^\e.
\end{equation}
By Proposition \ref{FlatTorsion}, $(\mathcal{T}_{d,r})^\e_{\mathring{Q}_{d,r}}$ is flat over $\mathring{Q}_{d,r}$ with relative
degree $d-r$. It follows that $(\mathcal{F}_{d,r})_{\mathring{Q}_{d,r}}^\e$ is flat over
$\mathring{Q}_{d,r}$ with relative degree $d-r$.
\end{proof}

We denote by $\mathring{\varphi}: \mathring{Q}_{d,r}\to
\mathring{Z}_{d,r}$ the map factored out from $\phi_0: \mathring{Q}_{d,r}\to Q_d$. The composition of $\mathring{\varphi}$ with the inclusion $\mathring{Z}_{d,r}\embed Z_{d,r}$ is the restriction of the morphism $\varphi$ to $\mathring{Q}_{d,r}$.

\begin{prop}
The morphism $\mathring{\varphi}: \mathring{Q}_{d,r}\to \mathring{Z}_{d,r}$ is an isomorphism.
\end{prop}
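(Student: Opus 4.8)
The plan is to construct an explicit inverse morphism $\psi\colon\mathring Z_{d,r}\to\mathring Q_{d,r}$ and then show $\mathring\varphi\circ\psi=\id$ and $\psi\circ\mathring\varphi=\id$ by invoking the uniqueness clauses of the universal properties of $Q_d$, of $Q_r$, and of the relative Quot scheme $\mathring Q_{d,r}=Q_{d,r}\times_{Q_r}\mathring Q_r=\Quot^{0,d-r}_{(\mathcal E_r)_{\mathring Q_r}/\mathbb P^1_{\mathring Q_r}/\mathring Q_r}$. To build $\psi$, I work on $\mathbb P^1_{\mathring Z_{d,r}}$ with the restricted universal sequence $0\to\mathcal E\to V\to\mathcal F\to 0$, where $\mathcal E=(\mathcal E_d)_{\mathring Z_{d,r}}$ and $\mathcal F=(\mathcal F_d)_{\mathring Z_{d,r}}$; by Theorem~\ref{thmZ}(2), $\mathcal F^\e$ is flat over $\mathring Z_{d,r}$ of relative degree $d-r$. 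The decisive point is that this flatness forces the fibrewise splitting of $\mathcal F_q$ into torsion and locally free parts to propagate to sheaves over the base. Breaking the restricted dual sequence \eqref{a2} as $0\to\mathcal F^\vee\to V^\vee\to\mathcal H\to 0$ and $0\to\mathcal H\to\mathcal E^\vee\to\mathcal F^\e\to 0$, the middle sheaf $\mathcal H=\ker(\mathcal E^\vee\to\mathcal F^\e)$ is flat over $\mathring Z_{d,r}$ with locally free fibres, hence locally free; therefore $\mathcal F^\vee$ and $\mathcal F^{\vee\vee}$ are locally free (of rank $n-k$, relative degrees $-r$ and $r$), and the evaluation map $\mathcal F\to\mathcal F^{\vee\vee}$ restricts on each fibre $q$ to the quotient $\mathcal F_q\to\mathcal F_q/(\mathcal F_q)_\tor$, so (using Proposition~\ref{FlatTorsion}) it is surjective with kernel $\mathcal T'$ flat over $\mathring Z_{d,r}$, torsion, of relative degree $d-r$. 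Then the surjection $V\to\mathcal F\to\mathcal F^{\vee\vee}$ onto a locally free sheaf of rank $n-k$ and relative degree $r$ with locally free fibres defines a morphism $g\colon\mathring Z_{d,r}\to Q_r$ which factors through $\mathring Q_r$, with $\bar g^*\mathcal E_r=\ker(V\to\mathcal F^{\vee\vee})$ as a subsheaf of $V$; since $\mathcal E=\ker(V\to\mathcal F)\subset\bar g^*\mathcal E_r$ with $\bar g^*\mathcal E_r/\mathcal E\cong\mathcal T'$, the short exact sequence $0\to\mathcal E\to\bar g^*\mathcal E_r\to\mathcal T'\to 0$ exhibits $\mathcal T'$ as a quotient of $\bar g^*\mathcal E_r$ that is flat over $\mathring Z_{d,r}$, torsion, of relative degree $d-r$, and hence (with $g$) determines $\psi\colon\mathring Z_{d,r}\to\mathring Q_{d,r}$ with $\theta\circ\psi=g$.

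To see $\mathring\varphi\circ\psi=\id$, note that the locally closed immersion $\iota\colon\mathring Z_{d,r}\hookrightarrow Q_d$ is a monomorphism and $\phi_0=\iota\circ\mathring\varphi$, so it suffices to prove $\phi_0\circ\psi=\iota$. Pulling back the universal sequence \eqref{UniversalExactSequence2} and the middle row of \eqref{bigDiagram} along $\bar\psi$, and using $\theta\circ\psi=g$ together with the equivalence $\bar\psi^*\mathcal T_{d,r}\simeq\mathcal T'$ furnished by the universal property of $\mathring Q_{d,r}$, one obtains $\bar\psi^*\mathcal E_{d,r}=\ker(\bar g^*\mathcal E_r\to\mathcal T')=\mathcal E$ inside $V$, hence $\bar\psi^*\mathcal F_{d,r}=V/\mathcal E=\mathcal F$; by \eqref{DiagramQdr} this means the pullback of the universal quotient of $Q_d$ under $\phi_0\circ\psi$ is equivalent to $V\to\mathcal F$, which is also its pullback under $\iota$, so the uniqueness part of the universal property of $Q_d$ (where commutativity is essential) gives $\phi_0\circ\psi=\iota$. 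For the reverse composition, $\psi\circ\mathring\varphi$ is now an idempotent endomorphism of $\mathring Q_{d,r}$; it is bijective on points, because $\mathring\varphi$ is: for $q\in\mathring Z_{d,r}$ the sheaf $\mathcal F_{d,q}$ has $(\mathcal F_{d,q})_\tor$ as its \emph{unique} torsion subsheaf of degree $d-r$ (by Theorem~\ref{thmZ}(1)), so the surjective morphism $\varphi$ is injective over $\mathring Z_{d,r}$. An idempotent endomorphism of the reduced scheme $\mathring Q_{d,r}$ (it is open in the smooth variety $Q_{d,r}$) that is bijective on points must be the identity: it factors through its equalizer with $\id$, a closed subscheme with the full underlying space, which by reducedness is all of $\mathring Q_{d,r}$. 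Hence $\psi\circ\mathring\varphi=\id$ and $\mathring\varphi$ is an isomorphism. (Alternatively $\psi\circ\mathring\varphi=\id$ can be verified directly from the uniqueness in the universal property of $\mathring Q_{d,r}$ by pulling back $\mathcal T'$ along $\bar{\mathring\varphi}$ and identifying it with $(\mathcal T_{d,r})_{\mathring Q_{d,r}}$ via \eqref{bigDiagram} and \eqref{DiagramQdr}.)

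The step I expect to be the main obstacle is the assertion in the first paragraph that flatness of $\mathcal F_d^\e$ over $\mathring Z_{d,r}$ upgrades the fibrewise decomposition of $\mathcal F_{d,q}$ into a torsion part and a locally free part to an honest short exact sequence $0\to\mathcal T'\to\mathcal F\to\mathcal F^{\vee\vee}\to 0$ of sheaves with $\mathcal F^{\vee\vee}$ locally free and $\mathcal T'$ flat over $\mathring Z_{d,r}$ of the correct relative degree; once this ``family splitting'' is established (via the local freeness of $\mathcal H$ and Proposition~\ref{FlatTorsion}), the rest is diagram chasing and careful bookkeeping of the equivalences of quotients required to apply the uniqueness parts of the universal properties of $Q_d$, $Q_r$ and $Q_{d,r}$.
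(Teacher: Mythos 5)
Your construction of the inverse $\psi$ is essentially identical to the paper's: both dualize the restricted universal sequence, use the flatness of $\mathcal{F}_d^\e$ over $\mathring{Z}_{d,r}$ (Theorem \ref{thmZ}) to conclude that $\ker(\bar i^*\mathcal{E}_d^\vee\to \bar i^*\mathcal{F}_d^\e)$ and $(\bar i^*\mathcal{F}_d)^{\vee\vee}$ are locally free, read off the maps to $\mathring{Q}_r$ and to $Q_{d,r}$ from the resulting locally free quotient of $V$ and the flat torsion quotient of its kernel (your $\mathcal{T}'=\ker(\mathcal{F}\to\mathcal{F}^{\vee\vee})$ is canonically the paper's $(\bar i^*\mathcal{F}_d^\e)^\e$), and verify $\mathring{\varphi}\psi=\id$ by the uniqueness clause of the universal property of $Q_d$. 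The only deviation is your check of $\psi\mathring{\varphi}=\id$ via the idempotent-plus-reducedness shortcut, which is valid since $\mathring{Q}_{d,r}$ is a (separated, reduced) variety and $\mathring{\varphi}$ is injective on closed points by uniqueness of the degree-$(d-r)$ torsion subsheaf; the paper instead carries out the direct diagram chase that you relegate to your parenthetical alternative.
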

\begin{proof}
We construct a morphism $\psi: \mathring{Z}_{d,r} \to \mathring{Q}_{d,r}$, and show that $\mathring{\varphi} \psi=\id_{\mathring{Z}_{d,r}}$ and $\psi \mathring{\varphi}=\id_{\mathring{Q}_{d,r}}$.
Let $i: \mathring{Z}_{d,r}\embed Q_d$ be the inclusion. Pullback the universal exact sequence (\ref{UES-Qd}) to $\mathbb{P}^1_{\mathring{Z}_{d,r}}$:
\[
0\to \bar i^*\mathcal{E}_d\to V_{\mathbb{P}^1_{\mathring{Z}_{d,r}}}\to
\bar i^*\mathcal{F}_d\to 0
\]
Taking dual, one obtains a long exact sequence
\[
0\to (\bar i^*\mathcal{F}_d)^\vee\to V^\vee_{\mathbb{P}^1_{\mathring{Z}_{d,r}}}
\to \bar i^*\mathcal{E}_d^\vee\to \bar i^*\mathcal{F}_d^\e\to 0
\]
Let $\mathcal{G}=\ker(\bar i^*\mathcal{E}_d^\vee\to
\bar i^*\mathcal{F}_d^\e)$. Then we can break up the above sequence into two short exact sequences:
\begin{equation}\label{e5}
0\to(\bar i^*\mathcal{F}_d)^\vee\to V^\vee_{\mathbb{P}^1_{\mathring{Z}_{d,r}}}
\to \mathcal{G}\to 0, \quad 0\to\mathcal{G}\to \bar i^*\mathcal{E}_d^\vee\to
\bar i^*\mathcal{F}_d^\e \to 0
\end{equation}
By Theorem \ref{thmZ}, $\bar i^*\mathcal{F}_d^\e$ is flat over $\mathring{Z}_{d,r}$
with relative degree $d-r$. It follows that $\mathcal{G}$ and
$(\bar i^*\mathcal{F}_d)^\vee$ are both flat over $\mathring{Z}_{d,r}$, and hence they are
both locally free. Since $\bar i^*\mathcal{F}_d^\e$ has rank 0 and relative degree
$d-r$, we know that $\mathcal{G}$ has rank $k$ and relative degree $r$, and that
$(\bar i^*\mathcal{F}_d)^\vee$ has rank $n-k$ and relative degree $-r$. Taking dual
again to both sequences in (\ref{e5}), we obtain
\begin{equation}\label{e1}
0\to \mathcal{G}^\vee \to V_{\mathbb{P}^1_{\mathring{Z}_{d,r}}}\to
(\bar i^*\mathcal{F}_d)^{\vee\vee} \to 0
\end{equation}
\begin{equation}\label{e2}
0\to \bar i^*\mathcal{E}_d \to \mathcal{G}^\vee \to
(\bar i^*\mathcal{F}_d^\e)^\e \to 0
\end{equation}
%Note that $\mathcal{F}_d^\e$ does not satisfy the condition in Proposition \ref{Pullback-Ext}, so we cannot simplify the notation $(\bar i^*\mathcal{F}_d^\e)^\e$ as $\bar i^*\mathcal{F}_d^{\e\e}$ because the latter could mean either $\bar i^*(\mathcal{F}_d^{\e\e})$ or $(\bar i^*\mathcal{F}_d^\e)^\e$ which are not necessarily isomorphic.
We have $(\bar i^*\mathcal{F}_d)^{\vee\vee}$ is locally free of rank $n-k$ and
relative degree $r$. By the universal property of $Q_r$, the exact sequence (\ref{e1}) gives rise
to a morphism
\[
\psi_0: \mathring{Z}_{d,r}\to Q_r.
\]
such that the following diagram commutes:
\begin{equation}\label{DiagramQr}
\CD{ %
0\ar[r] & \mathcal{G}^\vee\ar[r]\ar@{=}[d] & V_{\mathbb{P}^1_{\mathring{Z}_{d,r}}}\ar[r]\ar@{=}[d] & (\bar i^*\mathcal{F}_d)^{\vee\vee}\ar[r]\ar[d]^{\simeq}& 0 \\
0\ar[r] & \bar\psi_0^*\mathcal{E}_r \ar[r] & \bar\psi_0^*V_{\mathbb{P}^1_{Q_r}}\ar[r] & \bar\psi_0^*\mathcal{F}_r\ar[r] & 0  \\
} %
\end{equation}
Here we view $\mathcal{G}^\vee$ as a subsheaf of $V_{\mathbb{P}^1_{\mathring{Z}_{d,r}}}$.
We see that $\bar\psi_0^*\mathcal{F}_r$ is locally free, and hence by Theorem \ref{thmZ}, $\psi_0$
factors through the inclusion $\mathring{Q}_{r}\hookrightarrow Q_r$. We write
\[
\psi_1: \mathring{Z}_{d,r}\to \mathring{Q}_{r}
\]
for the morphism factored out from $\psi_0$.

Taking into account of the diagram (\ref{DiagramQr}), the exact sequence (\ref{e2}) becomes
\begin{equation}\label{e3}
0\to \bar i^*\mathcal{E}_d \to \bar\psi_0^*\mathcal{E}_r \to
(\bar i^*\mathcal{F}_d^\e)^\e \to 0
\end{equation}
By Proposition \ref{FlatTorsion}, $(\bar i^*\mathcal{F}_d^\e)^\e$ is flat over
$\mathring{Z}_{d,r}$ with rank 0 and relative degree $d-r$. Thus by the universal property of
$Q_{d,r}$, the exact sequence (\ref{e3}) above induces a morphism of $Q_r$-schemes
\[
\psi_2: \mathring{Z}_{d,r}\to Q_{d,r}
\]
such that the following diagram commutes:
\begin{equation}\label{DiagramZdr}
\CD{ %
0\ar[r] & \bar i^*\mathcal{E}_d\ar[r]\ar@{=}[d]& \bar\psi_0^*\mathcal{E}_r
\ar[r]\ar@{=}[d] & (\bar i^*\mathcal{F}_d^\e)^\e \ar[r]\ar[d]^{\simeq} & 0 \\
0\ar[r] & \bar\psi_2^*\mathcal{E}_{d,r} \ar[r]& \bar\psi_2^*\bar\theta^*\mathcal{E}_r \ar[r] &
\bar\psi_2^*\mathcal{T}_{d,r}\ar[r] & 0
} %
\end{equation}
The maps $\psi_1$ and $\psi_2$ fit into the following commutative diagram
\[
\CD{ %
\mathring{Z}_{d,r}\ar@/^/[drr]^{\psi_2}\ar@/_/[ddr]_{\psi_1}\ar@{.>}[dr]|-{\psi} & & \\
 & \mathring{Q}_{d,r}\ar[d]\ar@{^(->}[r]_j & Q_{d,r}\ar[d]^{\theta} \\
 & \mathring{Q}_r\ar@{^(->}[r] & Q_r
} %
\]
Thus we obtain another $Q_r$-morphism
\[
\psi:=\psi_2\times\psi_1: \mathring{Z}_{d,r}\to Q_{d,r}\times_{Q_r}
\mathring{Q}_r=\mathring{Q}_{d,r}
\]
We now show that (i) $\mathring{\varphi} \psi=\id_{\mathring{Z}_{d,r}}$ and (ii) $\psi
\mathring{\varphi}=\id_{\mathring{Q}_{d,r}}$.

(i) To show that $\mathring{\varphi} \psi=\id_{\mathring{Z}_{d,r}}$, it suffices to show that $i\mathring{\varphi}
\psi=i$, or $\phi_0 \psi=i$, or $\phi j\psi=i$, or $\phi\psi_2=i$.
%First, we show that $\bar\psi_2^*\mathcal{F}_{d,r}=\bar i^*\mathcal{F}_d$.
Note that we have the following commutative diagram
\[
\CD{ %
0\ar[r] & \bar\psi_2^*\mathcal{E}_{d,r}\ar@{=}[d]\ar[r]& \bar\psi_2^*V_{\mathbb{P}^1_{Q_{d,r}}}
\ar@{=}[d]\ar[r] & \bar\psi_2^*\mathcal{F}_{d,r}\ar@{..>}[d]^{\simeq}\ar[r] & 0 \\
0\ar[r] & \bar i^*\mathcal{E}_d\ar[r] &
V_{\mathbb{P}^1_{\mathring{Z}_{d,r}}}\ar[r] & \bar i^*\mathcal{F}_d \ar[r] & 0
} %
\]
where the first row is obtained by applying $\bar\psi_2^*$ to the middle row of the diagram
(\ref{bigDiagram}), and the equality
$\bar\psi_2^*\mathcal{E}_{d,r}=\bar i^*\mathcal{E}_d$ is from the diagram
(\ref{DiagramZdr}).
%So we have $\bar\psi_2^*\mathcal{F}_{d,r}=\bar i^*\mathcal{F}_d$.
Thus we have a commutative diagram of equivalent quotients:
\[
\CD{ %
{\overline{\phi\psi_2}\,}^*V_{\mathbb{P}^1_{Q_d}}\ar@{=}[r]\ar@{->>}[d] &
\bar\psi_2^*\bar\phi^*V_{\mathbb{P}^1_{Q_d}} \ar@{->>}[d]\ar@{=}[r] & \bar\psi_2^*
V_{\mathbb{P}^1_{Q_{d,r}}} \ar@{=}[r]\ar@{->>}[d] & V_{\mathbb{P}^1_{\mathring{Z}_{d,r}}}
\ar@{=}[r]\ar@{->>}[d] & \bar
i^*V_{\mathbb{P}^1_{Q_d}} \ar@{->>}[d] \\
{\overline{\phi\psi_2}\,}^*\mathcal{F}_d \ar@{=}[r] & \bar\psi_2^*\bar\phi^*\mathcal{F}_d
\ar[r]^{\simeq} & \bar\psi_2^*\mathcal{F}_{d,r} \ar[r]^{\simeq} & \bar i^*\mathcal{F}_d
\ar@{=}[r] & \bar i^*\mathcal{F}_d
} %
\]
By the universal property of $Q_d$, we must have $\phi\psi_2=i$, and hence $\mathring{\varphi}
\psi=\id_{\mathring{Z}_{d,r}}$.

(ii) Let $j: \mathring{Q}_{d,r}\to Q_{d,r}$ denote the inclusion. To show that $\psi \mathring{\varphi}=\id_{\mathring{Q}_{d,r}}$, it suffices to show that $j \psi
\mathring{\varphi}=j$, or $\psi_2 \mathring{\varphi}=j$.
%First, we show that $\bar\mathring{\varphi}^*\mathcal{G}^\vee=\bar j^*\bar\theta^*\mathcal{E}_r$.

We first show that $\psi_2\mathring{\varphi}: \mathring{Q}_{d,r}\to Q_{d,r}$ is a $Q_r$-morphism, where $\mathring{Q}_{d,r}$ is viewed as a $Q_r$-scheme through $\theta j: \mathring{Q}_{d,r}\to Q_r$. Equivalently, we show that $\theta j=\theta \psi_2\mathring{\varphi}$, or $\theta j=\psi_0\mathring{\varphi}$. We have two commutative diagrams of the same shape with exact rows and columns:
\[
\CD{
       &                                     & 0\ar[d]                                        & 0\ar[d]                             & \\
0\ar[r]&\bar j^*\mathcal{E}_{d,r}\ar[r]\ar@{=}[d]&\bar j^*\bar\theta^*\mathcal{E}_{r}\ar[r]\ar[d]    & \bar j^*\mathcal{T}_{d,r}\ar[r]\ar@{..>}[d]     & 0 \\
0\ar[r]&\bar j^*\mathcal{E}_{d,r}\ar[r]          &V_{\mathbb{P}^1_{\mathring{Q}_{d,r}}}\ar[r]\ar[d]    & \bar j^*\bar\phi^*\mathcal{F}_d\ar[r]\ar@{..>}[d]     & 0  \\
       &                                     &\bar j^*\bar\theta^*\mathcal{F}_{r}\ar@{=}[r]\ar[d]& \bar j^*\bar\theta^*\mathcal{F}_r\ar[d]& \\
       &                                     &0                                               & 0                                   & \\
}
\qquad
\CD{
       &                                     & 0\ar[d]                                        & 0\ar[d]                             & \\
0\ar[r]&\bar{\mathring{\varphi}}^*\bar i^*\mathcal{E}_{d}\ar[r]\ar@{=}[d]&\bar{\mathring{\varphi}}^*\bar\psi_0^*\mathcal{E}_{r}\ar[r]\ar[d]    & \bar{\mathring{\varphi}}^*(\bar i^*\mathcal{F}_{d}^\e)^\e\ar[r]\ar@{..>}[d]     & 0 \\
0\ar[r]&\bar{\mathring{\varphi}}^*\bar i^*\mathcal{E}_{d}\ar[r]          &V_{\mathbb{P}^1_{\mathring{Q}_{d,r}}}\ar[r]\ar[d]    & \bar{\mathring{\varphi}}^*\bar i^*\mathcal{F}_d\ar[r]\ar@{..>}[d]     & 0  \\
       &                                     &\bar{\mathring{\varphi}}^*\bar\psi_0^*\mathcal{F}_{r}\ar@{=}[r]\ar[d]& \bar{\mathring{\varphi}}^*\bar\psi_0^*\mathcal{F}_r\ar[d]& \\
       &                                     &0                                               & 0                                   & \\
}
\]
The left diagram is obtained from (\ref{bigDiagram}) by replacing $\mathcal{F}_{d,r}$ with $\bar\phi^*\mathcal{F}_d$ using the isomorphism $\mathcal{F}_{d,r}\simeq \bar\phi^*\mathcal{F}_d$ first and applying $\bar j^*$ second, while the right diagram is obtained by combining the two exact sequences (\ref{e1}) and (\ref{e2}) first, replacing $\mathcal{G}^\vee$ with $\bar\psi_0^*\mathcal{E}_r$ and $(\bar i^*\mathcal{F}_d)^{\vee\vee}$ with $\bar\psi_0^*\mathcal{F}_r$ second, and applying $\bar{\mathring{\varphi}}^*$ third.
Note that $i\mathring{\varphi}=\phi_0=\phi j$. Hence the two middle rows are exactly the same exact sequences because they are both the pullback of the universal exact sequence of $Q_d$ by the same map $\phi_0$. Note also that $\bar j^*\mathcal{T}_{d,r}$ and $\bar{\mathring{\varphi}}^*(\bar i^*\mathcal{F}_{d}^\e)^\e$ are both torsion submodules of the same module  $\bar j^*\bar\phi^*\mathcal{F}_d=\bar{\mathring{\varphi}}^*\bar i^*\mathcal{F}_d$ since $\bar j^*\bar\theta^*\mathcal{F}_r$ and $\bar{\mathring{\varphi}}^*\bar\psi_0^*\mathcal{F}_r$ are both locally free. Therefore, $\bar j^*\mathcal{T}_{d,r}=\bar{\mathring{\varphi}}^*(\bar i^*\mathcal{F}_{d}^\e)^\e$ as submodules, and hence we have a commutative diagram
\[
\CD{
  V_{\mathbb{P}^1_{\mathring{Q}_{d,r}}}\ar@{->>}[r]\ar@{=}[d] & \bar j^*\bar\phi^*\mathcal{F}_d \ar@{->>}[r]\ar@{=}[d] & \bar j^*\bar\theta^*\mathcal{F}_r \ar[d]^{\simeq} \\
  V_{\mathbb{P}^1_{\mathring{Q}_{d,r}}}\ar@{->>}[r] & \bar{\mathring{\varphi}}^*\bar i^*\mathcal{F}_d \ar@{->>}[r] & \bar{\mathring{\varphi}}^*\bar\psi_0^*\mathcal{F}_r
}
\]
So $\theta j=\psi_0\mathring{\varphi}$ by the universal property of $Q_r$.

Next, we have a commutative diagram of equivalent quotients on $\mathbb{P}^1_{Q_{d,r}}$:
{\small
\[
\CD{ %
{\overline{\psi_2 \mathring{\varphi}}\,}^*\bar\theta^*\mathcal{E}_r \ar@{->>}[d]\ar@{=}[r] &
\bar{\mathring{\varphi}}^*\bar\psi_2^*\bar\theta^*\mathcal{E}_r \ar@{->>}[d]\ar@{=}[r] &
\bar{\mathring{\varphi}}^*\bar\psi_0^*\mathcal{E}_r \ar@{->>}[d]\ar@{=}[r] & \bar j^*\bar\theta^*\mathcal{E}_r
\ar@{->>}[d] \\
{\overline{\psi_2 \mathring{\varphi}}\,}^*\mathcal{T}_{d,r} \ar@{=}[r] &
\bar{\mathring{\varphi}}^*\bar\psi_2^*\mathcal{T}_{d,r} \ar[r]^{\simeq}  &
\bar{\mathring{\varphi}}^*(\bar i^*\mathcal{F}_d^\e)^\e \ar@{=}[r] & \bar j^*\mathcal{T}_{d,r}
}%
\]
}
By the universal property of $Q_{d,r}$, $\psi_2\mathring{\varphi}=j$ as $Q_r$-morphisms, hence
$\psi\mathring{\varphi}=\id_{\mathring{Q}_{d,r}}$.
\end{proof}

This shows that $\varphi: Q_{d,r}\to Z_{d,r}$ is a birational morphism. It then follows that
\begin{cor}
For each $r$, $Z_{d,r}$ is irreducible and of codimension $(n-k)(d-r)$, and $\mathring{Z}_{d,r}$ is nonsingular. In particular, $Z_{d,0}$ is an irreducible nonsingular subvariety of $Q_d$.
\end{cor}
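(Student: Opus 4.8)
The plan is to derive the corollary entirely from what has just been proved — that $\varphi\colon Q_{d,r}\to Z_{d,r}$ is surjective and that $\mathring\varphi\colon\mathring Q_{d,r}\to\mathring Z_{d,r}$ is an isomorphism — together with the earlier structural facts about $Q_{d,r}$ (it is an irreducible nonsingular projective variety of dimension $nr+k(n-k+d-r)$) and about $Q_d$ (irreducible nonsingular of dimension $nd+k(n-k)$). First I would observe that since $\varphi$ is surjective and $Q_{d,r}$ is irreducible, $|Z_{d,r}|=\varphi(|Q_{d,r}|)$ is the continuous image of an irreducible topological space, hence irreducible; this gives the irreducibility of $Z_{d,r}$.

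Next I would compute the dimension on the open strata. The subset $\mathring Q_{d,r}=\theta^{-1}(\mathring Q_r)$ is a nonempty open subscheme of $Q_{d,r}$: it is open because $\mathring Q_r\subset Q_r$ is open, and nonempty because $\mathring Q_r=\Mor_r(\mathbb P^1,\Gr)$ is nonempty and $\theta$ is surjective (each fibre of $\theta$ is a nonempty Quot scheme of degree-$(d-r)$ torsion quotients of a rank-$k$ bundle on $\mathbb P^1$). Being a nonempty open subscheme of the irreducible variety $Q_{d,r}$, it is dense, so $\dim\mathring Q_{d,r}=nr+k(n-k+d-r)$, and it is nonsingular. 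Via the isomorphism $\mathring\varphi$, the scheme $\mathring Z_{d,r}$ is therefore nonsingular of the same dimension. Since $\mathring Z_{d,r}=Z_{d,r}\setminus Z_{d,r-1}$ is a nonempty open subscheme of the irreducible $Z_{d,r}$, it is dense in $Z_{d,r}$, whence $\dim Z_{d,r}=nr+k(n-k+d-r)$. The codimension in $Q_d$ is then
\[
\codim(Z_{d,r},Q_d)=\bigl(nd+k(n-k)\bigr)-\bigl(nr+k(n-k+d-r)\bigr)=n(d-r)-k(d-r)=(n-k)(d-r).
\]
For $r=0$ one has $Z_{d,-1}=\varnothing$, so $Z_{d,0}=\mathring Z_{d,0}$, which by the above is an irreducible nonsingular (in particular reduced) closed subscheme of $Q_d$, i.e.\ an irreducible nonsingular subvariety.

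The routine verifications to be careful about — and the closest thing to an obstacle in this otherwise formal argument — are the nonemptiness and openness claims that make the stratum dimensions meaningful: that $\mathring Q_{d,r}$ is a nonempty (dense) open subscheme of $Q_{d,r}$, which rests on the surjectivity of $\theta$ over $\mathring Q_r$ and hence on the nonemptiness of the fibre Quot schemes, and that $\mathring Z_{d,r}$ is genuinely open and nonempty in $Z_{d,r}$ so that the dimension read off on the open stratum equals $\dim Z_{d,r}$. Both reduce to facts already available: nonemptiness of the relevant Quot schemes, and the irreducibility of $Q_{d,r}$ and of $Z_{d,r}$ established above.
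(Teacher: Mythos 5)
Your proof is correct and follows exactly the route the paper intends: the paper states this corollary without a written proof, as an immediate consequence of the surjectivity of $\varphi$, the isomorphism $\mathring{\varphi}\colon\mathring{Q}_{d,r}\to\mathring{Z}_{d,r}$, and the previously established fact that $Q_{d,r}$ is an irreducible nonsingular projective variety of dimension $nr+k(n-k+d-r)$. Your write-up simply supplies the routine details (nonemptiness and density of the open strata, the dimension count, and reducedness of $Z_{d,0}$ via regularity), all of which are handled correctly.
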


\section{Successive {blowups} and main theorems}

We now perform a sequence of {blowups} on $Q_d$. Set $Q_d^{-1}:=Q_d$, $Z_{d,r}^{-1}:=Z_{d,r}$. For $r=0,\cdots,d-1$, let $Q_d^r$ be the blowup of $Q_d^{r-1}$ along $Z_{d,r}^{r-1}$ ($Q_d^r:=\Bl_{Z_{d,r}^{r-1}}Q_d^{r-1}$), $Z_{d,r}^r$ the exceptional divisor, and $Z_{d,l}^r$ the proper transform of $Z_{d,l}^{r-1}$ in $Q_d^r$ for $l\neq r$. We write $I_{Z_{d,r}^l}$ for the ideal sheaf of $Z_{d,r}^l$ in $Q_d^l$, for all $r,l$. Recall that $I_{Z_{d,r}}=I_{d,r,m_d}$.

This way of constructing of a compactification is similar to that in the construction of the space of complete quadrics and the space of complete collineations by Vainsencher. In fact, our proof will rely on a result from his construction of the space of complete collineations. To state that result, we begin with a brief review of the space of collineations. Some original notations are modified.

Let $E$ and $F$ be  two locally free sheaves on a scheme $X$. The space of complete collineations $\BBS(E,F)$ from $E$ to $F$ is defined as the projective bundle
\[
\BBS(E,F):=\mathbb{P}(\HHom(E,F))%=\PProj\big(\bigoplus_{i\geq 0}\Sym^i\HHom(E,F)^\vee\big)
\]
over $X$. It is equipped with a nowhere-vanishing universal homomorphism $u: E\to F\otimes \mathcal{O}_{\BBS(E,F)}(1)$. For $r=1,\cdots,N:=\min\{\rank E,\rank F\}-1$, let $D_r(E,F)$ be the schematic zero locus of $\bigwedge^{r+1} u: \bigwedge^{r+1}E\to \bigwedge^{r+1} F\otimes \mathcal{O}_{\BBS(E,F)}(r+1)$. The ideal sheaf $I_{D_r(E,F)}$ of $D_r(E,F)$ is the image of the homomorphism
\[
\HHom\bigg(\bigwedge^{r+1}E,\bigwedge^{r+1}F\bigg)^\vee\otimes\mathcal{O}_{\BBS(E,F)}(-r-1)\surj I_{D_r(E,F)}\subset \mathcal{O}_{\BBS(E,F)}
\]
induced by $\bigwedge^{r+1} u$. Let $\BBS_r(E,F):=\BBS(\bigwedge^{r+1}E,\bigwedge^{r+1}F)$. The section $\bigwedge^{r+1} u$ induces an $X$-morphism: $\BBS(E,F)\setminus D_r(E,F)\to \BBS_r(E,F)$.

Starting with $\BBS^0(E,F):=\BBS(E,F)$, $D_r^0(E,F):=D_r(E,F)$, let $\BBS^r(E,F)$ be the blowup of $\BBS^{r-1}(E,F)$ along $D_r^{r-1}(E,F)$, and denote by $D_r^r(E,F)$ the exceptional divisor, $D_l^r(E,F)$ the proper transform of $D_l^{r-1}(E,F)$ in $\BBS^r(E,F)$ for $l\neq r$. The ideal sheaf of $D_l^r(E,F)$ in $\BBS^r(E,F)$ is denoted by $I_{D_l^r(E,F)}$.

The useful result is Theorem 2.4 (8) in \cite{Vain84}, which gives a relationship between the ideal sheaf of the total transform and that of the proper transform of $D_l^{r-1}(E,F)$ in the {blowup} $\BBS^r(E,F)\to \BBS^{r-1}(E,F)$. We state the result below for the convenience of reference.
\begin{thm}[Vainsencher]\label{Vainsencher}
For $l> r\geq 1$, we have
\[
I_{D_l^{r-1}(E,F)}\cdot \mathcal{O}_{\BBS^r(E,F)}=I_{D_l^r(E,F)}\cdot (I_{D_r^r(E,F)})^{l-r+1}
\]
\end{thm}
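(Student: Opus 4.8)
All the objects involved --- the projective bundle $\BBS(E,F)$, the universal homomorphism $u$, the determinantal loci $D_l(E,F)$ together with their ideal sheaves, the iterated blowups $\BBS^{\bullet}(E,F)$, and the exceptional and proper transforms --- are compatible with flat base change on $X$ and with restriction to opens, and the asserted equality is an equality of ideal sheaves, hence can be checked locally. So one reduces to the case $X=\Spec A$ affine, $E=\mathcal{O}_X^p$, $F=\mathcal{O}_X^q$ with $p\le q$, and to working on a standard affine chart of $\BBS(E,F)=\mathbb{P}(\HHom(E,F))$, over which $\mathcal{O}_{\BBS}(1)$ is trivialized and $u$ is a $q\times p$ matrix $M$ of regular functions one of whose entries equals $1$; then $I_{D_l(E,F)}$ is generated by the size-$(l+1)$ minors of $M$. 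The proof then proceeds by induction on $r$, the engine throughout being the trivial identity that an $s\times s$ minor of a matrix $eN'$ equals $e^{s}$ times the corresponding minor of $N'$.

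Base case $r=1$: one blows up the rank-$\le 1$ locus $D_1(E,F)$. Near a point where $u$ has rank exactly $1$, left and right multiplication by invertible matrices (which alters neither the loci $D_l$ nor their ideals) brings $M$ to block form with a single entry $1$ and a residual block $N$ whose entries all lie in the maximal ideal of the point; then $I_{D_1}=(\text{entries of }N)$ and $I_{D_l}=(l\text{-minors of }N)$. On a chart of $\Bl_{D_1}\BBS$ lying over this point one has $N=eN'$ with $(e)=I_{D_1^1}$ locally and $N'$ having a unit entry, so the $l$-minors of $N$ are $e^{l}$ times those of $N'$, and the $l$-minors of $N'$ cut out $D_l^1$ there. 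Over $\BBS\setminus D_1$ nothing is modified and $I_{D_1^1}=\mathcal{O}$; patching the two descriptions over a cover of $\BBS^1$ gives $I_{D_l^0(E,F)}\cdot\mathcal{O}_{\BBS^1(E,F)}=I_{D_l^1(E,F)}\cdot(I_{D_1^1(E,F)})^{l}$, which is the case $r=1$, $l\ge 2$.

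Inductive step: one must describe $\BBS^{r-1}(E,F)$ near the preimage of $D_r(E,F)$, and there are two regions. Over the open locus $D_r\setminus D_{r-1}$ the centres of all the previous blowups are disjoint from a neighbourhood, so $\BBS^{r-1}\to\BBS$ is an isomorphism there; the normal-form reduction brings $M$ to block form with an $r\times r$ identity block and a residual $(q-r)\times(p-r)$ block $N$ vanishing at the point, an $(l+1)$-minor of $M$ becomes an $(l+1-r)$-minor of $N$, and the $r$-th blowup is the blowup of the entry-ideal of $N$ --- so on a chart where $N=eN'$ this minor is $e^{l-r+1}$ times the corresponding minor of $N'$, and one gets the identity with exponent $l-r+1$ locally, exactly as in the base case. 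The delicate region is over $D_{r-1}$: there the proper transform $D_r^{r-1}$ meets the earlier exceptional divisors $D_1^{r-1},\dots,D_{r-1}^{r-1}$, and one needs a genuine local model of the iterated blowup. Here the plan is to use the rational maps $\bigwedge^{j+1}u\colon\BBS(E,F)\dashrightarrow\BBS_j(E,F)$ for $j<r$: blowing up $D_1,\dots,D_{r-1}$ resolves these simultaneously, realizing $\BBS^{r-1}(E,F)$ as the closure of the graph of $(\bigwedge^{2}u,\dots,\bigwedge^{r}u)$ in $\BBS(E,F)\times\prod_{j=1}^{r-1}\BBS_j(E,F)$ and equipping it with extended compound homomorphisms whose degeneracy loci are the $D_l^{r-1}$; the identity for step $r$ then follows by combining the inductive hypothesis applied on the factors $\BBS_j(E,F)$ with the compound-matrix (Sylvester--Franke type) identities expressing the minors of $u$ through the minors of its compounds.

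I expect the main obstacle to be precisely this last region: showing that after dividing the total transform $I_{D_l^{r-1}(E,F)}\cdot\mathcal{O}_{\BBS^r(E,F)}$ by the exceptional divisor $(I_{D_r^r(E,F)})^{l-r+1}$, what remains is exactly the proper-transform ideal $I_{D_l^r(E,F)}$ --- with no residual common factor and no embedded component along $D_r^r$, and with the exponent constant equal to $l-r+1$ even where $D_r^{r-1}$ meets the earlier exceptional divisors. Controlling this requires the explicit charts on the iterated blowup of the generic determinantal variety (equivalently, the remaining parts of Vainsencher's Theorem~2.4), together with the standard facts that generic determinantal ideals are Cohen--Macaulay with well-understood Rees algebras. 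Once the correct local model is in hand, the exponent $l-r+1$ is produced, as in the base case, by the minor-factorization identity $\det(eN')=e^{s}\det(N')$ with $s=l-r+1$.
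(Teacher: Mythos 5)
The paper offers no proof of this statement: it is quoted verbatim from Vainsencher \cite{Vain84}, Theorem 2.4(8), and used as a black box, so there is no in-paper argument to compare yours against. Judged on its own terms, your sketch identifies the right mechanism for the exponent (the factorization of an $s$-minor of $eN'$ as $e^{s}$ times the corresponding minor of $N'$, with $s=l-r+1$ after splitting off an $r\times r$ identity block), and the reduction to local normal forms is indeed how Vainsencher proceeds. But it is not yet a proof, for two reasons. First, even in the region over $D_r\setminus D_{r-1}$, the minor factorization only shows that the total-transform ideal equals $e^{l-r+1}$ times the ideal generated by the $(l-r+1)$-minors of the residual matrix $N'$; the identification of that latter ideal with the proper-transform ideal $I_{D_l^r(E,F)}$ --- i.e., that the minors of $N'$ cut out exactly the strict transform, with no extra component supported on $e=0$ --- is asserted ("the $l$-minors of $N'$ cut out $D_l^1$ there") but not argued. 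That identification is itself a nontrivial clause of Vainsencher's Theorem 2.4, proved by a further induction that uses the unit entry of $N'$ to split off another rank-one block.

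Second, the region lying over $D_{r-1}$, which you correctly single out as the delicate one, is left as a plan rather than an argument. Vainsencher's proof handles it by an explicit inductive normal form for the pullback of $u$ to $\BBS^{r-1}(E,F)$ near such points (a block form whose diagonal entries are products of the exceptional parameters of the earlier blowups, so that every minor factors with computable exponents, uniformly equal to $l-r+1$ along all of $D_r^{r-1}$). Your proposed alternative --- realizing $\BBS^{r-1}(E,F)$ as the closure of the graph of $(\bigwedge^{2}u,\dots,\bigwedge^{r}u)$ and invoking Sylvester--Franke type identities --- is a genuinely different idea, but the central difficulty is precisely that ideals of minors of the compounds $\bigwedge^{j}u$ do not straightforwardly recover the ideals of minors of $u$ with the correct multiplicities, and you do not address this. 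As written, the hard case is deferred back to "the remaining parts of Vainsencher's Theorem 2.4," which is circular if the goal is an independent proof and redundant if the goal is to reconstruct the cited one.
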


We will use this result to show that there is a similar relationship between the ideal sheaf of the total transform and that of the proper transform of $Z_{d,l}^{r-1}$ in the {blowup} $Q_d^r\to Q_d^{r-1}$.
\begin{prop} \label{IdealFactor1}
For $l>r\geq 0$, $I_{Z_{d,l}^{r-1}}\cdot
\mathcal{O}_{Q_d^r}=I_{Z_{d,l}^r}\cdot(I_{Z_{d,r}^r})^{l-r+1}$
\end{prop}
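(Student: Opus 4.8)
The plan is to deduce Proposition \ref{IdealFactor1} from Vainsencher's identity (Theorem \ref{Vainsencher}) by realizing the whole tower $Q_d\leftarrow Q_d^0\leftarrow\cdots\leftarrow Q_d^{d-1}$ inside a single space of complete collineations. Fix $m\geq m_d$ and set $A:=\pi_*V^\vee_{\mathbb{P}^1_{Q_d}}(m)$, which is free, and $B:=\pi_*\mathcal{E}_d^\vee(m)$, which is locally free of rank $\widehat m+d$; by (\ref{mainExactSequence}) the homomorphism $\rho_{d,m}\colon A\to B$ has cokernel $\pi_*\mathcal{F}_d^\varepsilon(m)$, and by Proposition \ref{c1} its rank is $\geq\widehat m$ at every point. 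Thus $\rho_{d,m}$ is a fibrewise nonzero section of $\HHom(A,B)$ and defines a section $\beta\colon Q_d\to\BBS(A,B)=\mathbb{P}(\HHom(A,B))$ of the projective bundle over $Q_d$; as a section of a separated morphism it is a closed immersion, with $\beta^*\mathcal{O}(1)\cong\mathcal{O}_{Q_d}$ and $\beta^*u=\rho_{d,m}$. Comparing defining formulas, the ideal $I_{D_{\widehat m+l}(A,B)}$ (the image of $\bigwedge^{\widehat m+l+1}u$) pulls back to the image of $\bigwedge^{\widehat m+l+1}\rho_{d,m}$, which is $I_{Z_{d,l}}=I_{d,l,m}$; hence $I_{D_{\widehat m+l}(A,B)}\cdot\mathcal{O}_{Q_d}=I_{Z_{d,l}}$ for all $l$. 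Since $\bigwedge^{\widehat m}\rho_{d,m}$ vanishes nowhere, $\beta$ factors through $U:=\BBS(A,B)\setminus D_{\widehat m-1}(A,B)$.

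Over $U$ the first $\widehat m-1$ of Vainsencher's blowups are isomorphisms, since each of the centers $D_1(A,B),\dots,D_{\widehat m-1}(A,B)$ lies in $D_{\widehat m-1}(A,B)$, which is disjoint from $U$. Hence $\BBS^{\widehat m-1}(A,B)|_U=U$ and, for $j\geq 0$, $\BBS^{\widehat m+j}(A,B)|_U=\Bl_{D_{\widehat m+j}^{\widehat m+j-1}(A,B)|_U}\BBS^{\widehat m+j-1}(A,B)|_U$, a tower running parallel to $Q_d\leftarrow Q_d^0\leftarrow\cdots$. I would then prove, by induction on $r$ starting from $\beta_{-1}:=\beta$, that $\beta$ lifts to a morphism $\beta_r\colon Q_d^r\to\BBS^{\widehat m+r}(A,B)|_U$ which is compatible with the blowup maps, realizes $Q_d^r$ as the strict transform of $Q_d^{r-1}$, and satisfies
\[
I_{Z_{d,l}^r}=I_{D_{\widehat m+l}^{\widehat m+r}(A,B)|_U}\cdot\mathcal{O}_{Q_d^r}\qquad\text{for every }l.
\]
The lift $\beta_r$ is produced by the universal property of the blowup, using the case $l=r$ of the identity at level $r-1$ — namely that the center $Z_{d,r}^{r-1}$ of the $r$-th blowup is the scheme-theoretic preimage under $\beta_{r-1}$ of the center $D_{\widehat m+r}^{\widehat m+r-1}(A,B)|_U$ downstairs. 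Granting this, the identity above for total transforms and for the exceptional divisor ($l=r$) is then immediate from the commutativity of $\beta_r$ with the blowup maps.

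The content, and the step I expect to be the main obstacle, is the identity above for proper transforms, i.e.\ for $l>r$. One inclusion is formal: $I_{Z_{d,l}^r}$ is the saturation of the total transform of $I_{Z_{d,l}^{r-1}}$ with respect to the (invertible) exceptional ideals, so pulling back the corresponding saturation inclusion on $\BBS^{\widehat m+r}(A,B)|_U$ and cancelling the invertible factors yields $I_{D_{\widehat m+l}^{\widehat m+r}(A,B)|_U}\cdot\mathcal{O}_{Q_d^r}\subseteq I_{Z_{d,l}^r}$. The reverse inclusion cannot be obtained formally, since strict transforms are not functorial for non-flat morphisms; it amounts to showing that the strict transform of $Z_{d,l}^{r-1}=\beta_{r-1}^{-1}(D_{\widehat m+l}^{\widehat m+r-1}(A,B)|_U)$ inside $Q_d^r$ coincides with $\beta_r^{-1}$ of the strict transform of $D_{\widehat m+l}^{\widehat m+r-1}(A,B)|_U$. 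For this I would use Vainsencher's explicit description of the proper transforms $D_{\widehat m+l}^{\widehat m+r}(A,B)$ (part of Theorem 2.4 in \cite{Vain84}): on the successive charts of the blowups each is the vanishing locus of an exterior power of a residual homomorphism obtained from $u$ by row and column operations. A local analysis along the (generic points of the) centers — showing that these charts and operations are pulled back correctly under $\beta_r$, so that the determinantal description restricts to an analogous one for $Z_{d,l}^r$ on $Q_d^r$ — gives the reverse inclusion. This verification, which parallels the proof of Vainsencher's Theorem 2.4, is the technical heart.

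Granting the inductive claim, the proposition follows at once. Apply Theorem \ref{Vainsencher} to the single blowup $\BBS^{\widehat m+r}(A,B)|_U=\Bl_{D_{\widehat m+r}^{\widehat m+r-1}(A,B)|_U}\BBS^{\widehat m+r-1}(A,B)|_U$, with the indices there taken to be $\widehat m+l$ and $\widehat m+r$ (and $\widehat m+l>\widehat m+r\geq 1$):
\[
I_{D_{\widehat m+l}^{\widehat m+r-1}(A,B)|_U}\cdot\mathcal{O}_{\BBS^{\widehat m+r}(A,B)|_U}=I_{D_{\widehat m+l}^{\widehat m+r}(A,B)|_U}\cdot\bigl(I_{D_{\widehat m+r}^{\widehat m+r}(A,B)|_U}\bigr)^{l-r+1}.
\]
Pulling this back along $\beta_r$ and using that $\beta_r$ followed by the blowup map equals $\beta_{r-1}$ followed by $Q_d^r\to Q_d^{r-1}$, together with the inductive identity at levels $r-1$ and $r$ and the exceptional-to-exceptional identification, the three ideals become $I_{Z_{d,l}^{r-1}}\cdot\mathcal{O}_{Q_d^r}$, $I_{Z_{d,l}^r}$ and $\bigl(I_{Z_{d,r}^r}\bigr)^{l-r+1}$, which is exactly Proposition \ref{IdealFactor1}.
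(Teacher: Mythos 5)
Your proposal is correct and follows essentially the same route as the paper: embed $Q_d$ into $\BBS(\pi_*V^\vee_{\mathbb{P}^1_{Q_d}}(m),\pi_*\mathcal{E}_d^\vee(m))$ via the nowhere-vanishing section $\rho_{d,m}$, observe that the first $\widehat m-1$ blowups do not affect $Q_d$, track the remaining tower of blowups alongside $Q_d^0,\dots,Q_d^{d-1}$, and pull back Vainsencher's identity. The compatibility of proper transforms for $l>r$ (your $I_{Z_{d,l}^r}=I_{D_{\widehat m+l}^{\widehat m+r}}\cdot\mathcal{O}_{Q_d^r}$), which you rightly isolate as the technical heart, is precisely the step the paper states as ``$Q_d^r\cap D_{\widehat m+l}^{\widehat m+r}=Z_{d,l}^r$ for $l\geq r$'' without further justification, so your account is, if anything, more explicit about where the real work lies.
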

\begin{proof}
Fix an integer $m\geq m_d$ and consider the space of collineations $\BBS(\pi_*V_{\mathbb{P}^1_{Q_d}}^\vee(m),\pi_*\mathcal{E}_d^\vee(m))$. For simplicity of notations, we put $\BBS:=\BBS(\pi_*V_{\mathbb{P}^1_{Q_d}}^\vee(m),\pi_*\mathcal{E}_d^\vee(m))$, $D_r:=D_r(\pi_*V_{\mathbb{P}^1_{Q_d}}^\vee(m),\pi_*\mathcal{E}_d^\vee(m))$, and $
D_l^r:=D_l^r(\pi_*V_{\mathbb{P}^1_{Q_d}}^\vee(m),\pi_*\mathcal{E}_d^\vee(m))$.

The nowhere vanishing section $\rho_{d,m}$ of $\HHom(\pi_*V_{\mathbb{P}^1_{Q_d}}^\vee(m),\pi_*\mathcal{E}_d^\vee(m)) $ on $Q_d$ induces a closed embedding (a morphism over $Q_d$) $[\rho_{d,m}]: Q_d\hookrightarrow \BBS$ such that $\rho_{d,m}=[\rho_{d,m}]^*u$, where $u$ is the universal homomorphism on $\BBS$:
\[
u: \pi_*V_{\mathbb{P}^1_{Q_d}}^\vee(m)\to \pi_*\mathcal{E}_d^\vee(m)\otimes \mathcal{O}_\BBS(1).
\]
Through this embedding, we will consider $Q_d$ as a closed subscheme of $\BBS$. By definition, $Z_{d,r}$ is the schematics zero locus of $\bigwedge^{\widehat{m}+r+1}\rho_{d,m}=[\rho_{d,m}]^*\bigwedge^{\widehat{m}+r+1}u$. It follows that
\[
Q_d\cap D_{\widehat m+r}=Z_{d,r}, \quad r=0,\dots,d-1
\]
and
\[
Q_d\cap D_l=\varnothing, \quad l=1,\dots,\widehat m-1.
\]
where the intersections are scheme-theoretic. The second equation means that the first $\widehat m-1$ {blowups} of $\BBS$ along $D_l$ for $l=1,\dots,\widehat m-1$ has no effect on $Q_d$. Therefore, we also have an embedding $Q_d\embed \BBS^{\widehat m-1}$, together with $Q_d\cap D_{\widehat{m}+r}^{\widehat{m}-1}=Z_{d,r}$, for $0\leq r\leq d-1$. In other words, $I_{D_{\widehat{m}+r}^{\widehat{m}-1}}\cdot \mathcal{O}_{Q_d}=I_{Z_{d,r}}$.

Starting from the $\widehat m$-th {blowup}, the {blowups} on $\BBS$ has an effect on $Q_d$. We have the following pullback diagram of {blowups}
\[
\CDR{ 
Q_d^r \ar@{^(->}[r]\ar[d] & \BBS^{\widehat{m}+r}  \ar[d] \\
\quad Q_d^{r-1} \ar@{^(->}[r] & \BBS^{\widehat{m}+r-1}
} 
\]
for all $0\leq r\leq d-1$. Further we have $Q_d^r\cap D_{\widehat{m}+l}^{\widehat{m}+r} =Z_{d,l}^r, \text{ for }l\geq r$.
Thus for $l>r$,
\[
\begin{split}
I_{Z_{d,i}^{r-1}}\cdot \mathcal{O}_{Q_d^r} & =I_{D_{\widehat{m}+i}^{\widehat{m}+r-1}}\cdot
\mathcal{O}_{Q_d^{r-1}}\cdot \mathcal{O}_{Q_d^r} = I_{D_{\widehat{m}+i}^{\widehat{m}+r-1}}\cdot \mathcal{O}_{\BBS_{\widehat{m}+i}^{\widehat{m}+r}}\cdot
\mathcal{O}_{Q_d^r} \\
& \stackrel{(*)}{=} I_{D_{\widehat{m}+i}^{\widehat{m}+r}}\cdot (I_{D_{\widehat{m}+i}^{\widehat{m}+r}})^{i-r+1} \cdot
\mathcal{O}_{Q_d^r} = I_{Z_{d,i}^r}\cdot (I_{Z_{d,r}^r})^{i-r+1}
\end{split}
\]
where $(*)$ is by Theorem \ref{Vainsencher}.
\end{proof}

Applying the above Proposition repeatedly, we obtain
\begin{cor}\label{IdealFactor}
For all $0\leq r\leq d-1$,
\[
I_{Z_{d,r}}\cdot \mathcal{O}_{Q_d^{r-1}}=I_{Z_{d,r}^{r-1}}\cdot (I_{Z_{d,r-1}^{r-1}})^2\cdot\cdots\cdot
(I_{Z_{d,1}^{r-1}})^{r} \cdot (I_{Z_{d,0}^{r-1}})^{r+1}
\]
\end{cor}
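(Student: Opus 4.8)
The plan is to deduce the Corollary from Proposition \ref{IdealFactor1} by peeling off one blowup at a time and feeding the output back in. I may assume $r\geq 1$, since for $r=0$ the claim is the tautology $I_{Z_{d,r}}\cdot\mathcal{O}_{Q_d^{-1}}=I_{Z_{d,r}}$. I would prove, by induction on $s$ for $-1\leq s\leq r-1$, the sharper identity
\[
I_{Z_{d,r}}\cdot\mathcal{O}_{Q_d^s}=I_{Z_{d,r}^s}\cdot\prod_{j=0}^{s}\bigl(I_{Z_{d,j}^s}\bigr)^{r-j+1},
\]
together with the auxiliary assertion that each $Z_{d,j}^s$ with $0\leq j\leq s$ is an effective Cartier divisor of $Q_d^s$. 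Expanding the product, the case $s=r-1$ is exactly the statement of the Corollary, and the base case $s=-1$ is trivial, the product being empty.

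For the inductive step I would pass from $Q_d^{s-1}$ to $Q_d^s=\Bl_{Z_{d,s}^{s-1}}Q_d^{s-1}$ and push the level $s-1$ identity forward along this blowup. The leading factor $I_{Z_{d,r}^{s-1}}$ is handled by Proposition \ref{IdealFactor1} with $l=r>s$, which gives $I_{Z_{d,r}^{s-1}}\cdot\mathcal{O}_{Q_d^s}=I_{Z_{d,r}^s}\cdot(I_{Z_{d,s}^s})^{r-s+1}$ --- precisely the new factor indexed by $j=s$. For each older factor $(I_{Z_{d,j}^{s-1}})^{r-j+1}$ with $0\leq j\leq s-1$ I would show that $I_{Z_{d,j}^{s-1}}\cdot\mathcal{O}_{Q_d^s}=I_{Z_{d,j}^s}$, i.e.\ that under this blowup the total transform of $Z_{d,j}^{s-1}$ coincides with its proper transform; granting this, the exponents $r-j+1$ carry through unchanged, $Z_{d,j}^s$ is again Cartier (being the pullback of the Cartier divisor $Z_{d,j}^{s-1}$), and multiplying the pieces together yields the level $s$ identity.

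So the whole argument reduces to the claim $I_{Z_{d,j}^{s-1}}\cdot\mathcal{O}_{Q_d^s}=I_{Z_{d,j}^s}$ for $j<s$. Since $Z_{d,j}^{s-1}$ is Cartier by the induction hypothesis, its total transform is an invertible ideal, so $\pi^{-1}(Z_{d,j}^{s-1})=Z_{d,j}^s+m\,Z_{d,s}^s$ for some integer $m\geq 0$, where $m$ is the order of vanishing of a local equation of $Z_{d,j}^{s-1}$ at the generic point of the center $Z_{d,s}^{s-1}$; hence it suffices to show that $Z_{d,s}^{s-1}$ is not contained in $Z_{d,j}^{s-1}$. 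This is where the geometry of the boundary worked out in the previous sections enters: $Z_{d,s}$ is irreducible, so its proper transform $Z_{d,s}^{s-1}$ is irreducible with a single generic point $\eta$; under the composite blowdown $\beta\colon Q_d^{s-1}\to Q_d$ the centers $Z_{d,0},\dots,Z_{d,s-1}$ all lie inside $Z_{d,s-1}$ and hence miss the dense open subset $\mathring{Z}_{d,s}$ of $Z_{d,s}$, so $\beta$ restricts to a birational morphism $Z_{d,s}^{s-1}\to Z_{d,s}$ and $\beta(\eta)$ is the generic point of $Z_{d,s}$. Because $\codim Z_{d,j}=(n-k)(d-j)>(n-k)(d-s)=\codim Z_{d,s}$ while $Z_{d,j}\subset Z_{d,s}$, this inclusion is strict, so $\beta(\eta)\notin Z_{d,j}$; as $\beta$ sends $Z_{d,j}^{s-1}$ into $Z_{d,j}$, we conclude $\eta\notin Z_{d,j}^{s-1}$, whence $m=0$ and the claim follows.

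I expect the main obstacle to be precisely this last step: verifying rigorously that the proper transform of the irreducible $Z_{d,s}$ stays irreducible and that $\beta$ restricts to a birational morphism onto $Z_{d,s}$, so that its generic point sits over the generic point of $Z_{d,s}$ and thereby lies off every $Z_{d,j}$ with $j<s$; one also has to be a little careful with the normal cone when the intermediate spaces $Q_d^{s-1}$ are not yet known to be smooth. Everything else is routine bookkeeping with Proposition \ref{IdealFactor1} and, underlying it, Vainsencher's Theorem \ref{Vainsencher}.
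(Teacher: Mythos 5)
Your proposal follows the same route as the paper, whose entire proof of this corollary is the single sentence ``Applying the above Proposition repeatedly, we obtain\dots''; but you have correctly observed that iterating Proposition \ref{IdealFactor1} only controls the leading factor $I_{Z_{d,r}^{s-1}}$ at each stage, and that one must separately verify $I_{Z_{d,j}^{s-1}}\cdot\mathcal{O}_{Q_d^s}=I_{Z_{d,j}^s}$ for $j<s$ (total transform of each older exceptional divisor equals its proper transform) in order for the exponents $r-j+1$ to persist through the later blowups --- a point the paper leaves entirely implicit. Your argument for this supplementary fact is the right one: the earlier centers all map into $Z_{d,s-1}$, so $Z_{d,s}^{s-1}$ contains a dense open copy of $\mathring{Z}_{d,s}$ and is irreducible with generic point lying over the generic point of $Z_{d,s}$, which misses every $Z_{d,j}$ with $j<s$ since $Z_{d,j}\subseteq Z_{d,s-1}\subsetneq Z_{d,s}$; as $Z_{d,j}^{s-1}$ maps into $Z_{d,j}$, the center is not contained in the Cartier divisor $Z_{d,j}^{s-1}$. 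The one soft spot, which you yourself flag, is the inference from ``the center is not contained in the divisor'' to ``total transform equals proper transform'': at this stage of the paper's logic $Q_d^{s-1}$ is not yet known to be nonsingular (or even normal), so one still needs to know that the invertible ideal $I_{Z_{d,j}^{s-1}}\cdot\mathcal{O}_{Q_d^s}$ is already saturated with respect to $I_{Z_{d,s}^s}$, i.e.\ that a local equation pulls back to a nonzerodivisor modulo powers of the exceptional ideal; the order-of-vanishing computation at the generic point of the center does not by itself rule out embedded components along the exceptional divisor without some unmixedness input (or an appeal to the portion of Vainsencher's Theorem \ref{Vainsencher} treating the indices $l<r$). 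This residual issue is inherited from, not introduced relative to, the paper's own one-line proof, and your write-up is otherwise a strictly more complete version of the paper's argument.
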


Take $m\geq m_d$. For each $l$, $\ds\bigwedge^{\widehat{m}+l+1}\rho_{d,m}$, a section of $\ds\HHom(\bigwedge^{\widehat{m}+l+1}\pi_*V_{\mathbb{P}^1_{Q_d}}^\vee(m), \bigwedge^{\widehat{m}+l+1}\pi_*\mathcal{E}_d^\vee(m))$, is nowhere vanishing on $Q_d\setminus Z_{d,l}$, hence it induces an embedding (over $Q_d$)
\[
[\bigwedge^{\widehat m+l+1}\rho_{d,m}]:Q_d\setminus Z_{d,l}\embed \BBS_{\widehat m+l}(\pi_*V_{\mathbb{P}^1_{Q_d}}^\vee(m), \pi_*\mathcal{E}_d^\vee(m))
\]
We also denote by $[\bigwedge^{\widehat m+r+1}\rho_{d,m}]$ any of its restrictions on $Q_d\setminus Z_{d,r}$ for $r>l$.

\begin{thm}
Let $m\geq m_d$. The blowup $Q_d^r$ is isomorphic to the closure of the image of the embedding
\[
\prod_{l=0}^{r}[\bigwedge^{\widehat m+l+1}\rho_{d,m}]: Q_d\setminus Z_{d,r}\embed \prod_{l=0}^{r}{}_{Q_d}\BBS_{\widehat m+l}(\pi_*V_{\mathbb{P}^1_{Q_d}}^\vee(m), \pi_*\mathcal{E}_d^\vee(m))
\]
\end{thm}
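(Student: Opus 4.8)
The plan is to identify each $Q_d^r$ inductively as the closure of a graph, by first reducing the blowup along the proper transform $Z_{d,r}^{r-1}$ to a blowup along the \emph{total} transform of $Z_{d,r}$, and then recognizing the latter as a fiber product with a single blowup of $Q_d$. Fix $m\ge m_d$. As in the proof of Proposition \ref{IdealFactor1} I use the closed embedding $[\rho_{d,m}]\colon Q_d\embed\BBS(\pi_*V_{\mathbb{P}^1_{Q_d}}^\vee(m),\pi_*\mathcal{E}_d^\vee(m))$; under it $Z_{d,l}$ is cut out by $\bigwedge^{\widehat m+l+1}\rho_{d,m}$, and I abbreviate $\BBS_{\widehat m+l}:=\BBS_{\widehat m+l}(\pi_*V_{\mathbb{P}^1_{Q_d}}^\vee(m),\pi_*\mathcal{E}_d^\vee(m))$, an $\mathbb{P}(\HHom(\bigwedge^{\widehat m+l+1}\pi_*V_{\mathbb{P}^1_{Q_d}}^\vee(m),\bigwedge^{\widehat m+l+1}\pi_*\mathcal{E}_d^\vee(m)))$ over $Q_d$. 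Write $P:=\prod_{l=0}^r{}_{Q_d}\BBS_{\widehat m+l}$ and $\Phi:=\prod_{l=0}^r[\bigwedge^{\widehat m+l+1}\rho_{d,m}]\colon Q_d\setminus Z_{d,r}\to P$. Since $\Phi$ is a section of the projective (hence separated) structure morphism $P\to Q_d$ over the open subscheme $Q_d\setminus Z_{d,r}$, it is automatically a locally closed immersion with image the ``graph'' $Q_d\setminus Z_{d,r}\subseteq P$; this is why it is an embedding. As $Z_{d,r}\subseteq Q_d\setminus\mathring Q_d$ is a proper closed subset and every blowup $Q_d^r\to Q_d$ is an isomorphism over $Q_d\setminus Z_{d,r}$, the latter is a dense open of $Q_d^r$, so it suffices to prove $Q_d^r\cong\overline{\Phi(Q_d\setminus Z_{d,r})}$ in $P$.

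The only nontrivial input beyond the references above is the classical description of a blowup as the closure of a graph: if $Z\subseteq X$ is a closed subscheme of an integral scheme whose ideal sheaf $I_Z\subseteq\mathcal{O}_X$ is the image of a homomorphism $\mathcal{G}^\vee\to\mathcal{O}_X$ with $\mathcal{G}$ locally free, then $\Bl_ZX$ is the closure, inside the $X$-scheme $\mathbb{P}(\mathcal{G})$, of the locally closed image of $X\setminus Z$ under the morphism $X\setminus Z\to\mathbb{P}(\mathcal{G})$ induced by the given homomorphism. Applying this to $Z_{d,r}\subseteq Q_d$, whose ideal $I_{Z_{d,r}}=I_{d,r,m_d}$ is by definition the image of $\HHom(\bigwedge^{\widehat m+r+1}\pi_*V_{\mathbb{P}^1_{Q_d}}^\vee(m),\bigwedge^{\widehat m+r+1}\pi_*\mathcal{E}_d^\vee(m))^\vee\to\mathcal{O}_{Q_d}$ induced by $\bigwedge^{\widehat m+r+1}\rho_{d,m}$, gives $\Bl_{Z_{d,r}}Q_d\cong\overline{Q_d\setminus Z_{d,r}}$ inside $\BBS_{\widehat m+r}$, via $[\bigwedge^{\widehat m+r+1}\rho_{d,m}]$. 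For $r=0$ this is the theorem.

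Now induct on $r$: assume $Q_d^{r-1}\cong\overline{Q_d\setminus Z_{d,r-1}}$ inside $P':=\prod_{l=0}^{r-1}{}_{Q_d}\BBS_{\widehat m+l}$ via $\prod_{l=0}^{r-1}[\bigwedge^{\widehat m+l+1}\rho_{d,m}]$. By Corollary \ref{IdealFactor}, $I_{Z_{d,r}}\cdot\mathcal{O}_{Q_d^{r-1}}=I_{Z_{d,r}^{r-1}}\cdot J$ with $J=(I_{Z_{d,r-1}^{r-1}})^2\cdots(I_{Z_{d,0}^{r-1}})^{r+1}$ an invertible ideal sheaf (each $Z_{d,l}^{r-1}$ with $l<r$ being an effective Cartier divisor --- an exceptional divisor of an earlier blowup, or a successive proper transform of one, which one may also extract from the pullback-of-blowups square in the proof of Proposition \ref{IdealFactor1} together with the corresponding properties of the spaces of complete collineations in \cite{Vain84}). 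Hence $Q_d^r=\Bl_{Z_{d,r}^{r-1}}Q_d^{r-1}=\Bl_{I_{Z_{d,r}}\cdot\mathcal{O}_{Q_d^{r-1}}}Q_d^{r-1}$. For a dominant morphism $f\colon Y\to X$ of integral schemes and closed $Z\subsetneq X$, the blowup $\Bl_{I_Z\mathcal{O}_Y}Y$ is canonically a closed subscheme of $Y\times_X\Bl_ZX$ (via the surjection of Rees algebras), equal to the closure there of $Y\setminus f^{-1}(Z)$. Taking $Y=Q_d^{r-1}\subseteq P'$, $X=Q_d$, $Z=Z_{d,r}$, $\Bl_{Z_{d,r}}Q_d\subseteq\BBS_{\widehat m+r}$ (all as $Q_d$-schemes), I obtain
\[
Q_d^r\ \cong\ \overline{Q_d\setminus Z_{d,r}}\ \subseteq\ Q_d^{r-1}\times_{Q_d}\Bl_{Z_{d,r}}Q_d\ \subseteq\ P'\times_{Q_d}\BBS_{\widehat m+r}\ =\ P,
\]
where $Q_d\setminus Z_{d,r}\subseteq Q_d\setminus Z_{d,r-1}$ sits inside the first factor as a dense open of $Q_d^{r-1}$ and inside the second as the dense open of $\Bl_{Z_{d,r}}Q_d$, so that its image in $P$ is $\Phi(Q_d\setminus Z_{d,r})$. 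Since $Q_d^{r-1}\times_{Q_d}\Bl_{Z_{d,r}}Q_d$ is closed in $P$, this closure equals $\overline{\Phi(Q_d\setminus Z_{d,r})}$ in $P$, completing the induction.

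The step I expect to be the real work is the passage in the third paragraph from blowing up the proper transform $Z_{d,r}^{r-1}$ to blowing up the total transform, i.e. verifying that the extra factor $J$ in Corollary \ref{IdealFactor} is an invertible ideal sheaf. This rests on the bookkeeping of ideal sheaves in Proposition \ref{IdealFactor1} and Corollary \ref{IdealFactor} (ultimately Vainsencher's Theorem \ref{Vainsencher}) together with an inductive control of the exceptional divisors produced along the way; once this is granted, the remainder is the formal ``blowup $=$ closure of graph'' mechanism and the routine compatibility of closures with the closed embeddings into the products of projective bundles.
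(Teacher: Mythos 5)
Your proposal is correct and follows essentially the same route as the paper: both arguments use Corollary \ref{IdealFactor} to replace the blowup along the proper transform $Z_{d,r}^{r-1}$ by the blowup along the total transform $I_{Z_{d,r}}\cdot\mathcal{O}_{Q_d^{r-1}}$ (the extra factor being invertible), identify that blowup as the closure of the graph inside $Q_d^{r-1}\times_{Q_d}\BBS_{\widehat m+r}(\pi_*V_{\mathbb{P}^1_{Q_d}}^\vee(m),\pi_*\mathcal{E}_d^\vee(m))$, and conclude by induction on $r$. Your write-up merely makes explicit the ``blowup equals closure of graph'' and ``strict transform equals closure in the fiber product'' facts that the paper invokes implicitly.
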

\begin{proof}
By construction, $Q_d^r$ is the blowup of $Q_d^{r-1}$ along the subscheme $Z_{d,r}^{r-1}$. Let $b: Q_d^{r-1}\to Q_d$ denote the composite {blowup}. Corollary \ref{IdealFactor} says that the ideal sheaf of the proper transform $Z_{d,r}^{r-1}$ of $Z_{d,r}$ and the ideal sheaf of the total transform $b^{-1}(Z_{d,r})$ of $Z_{d,r}$ only differ by an invertible ideal sheaf, the sheaf $(I_{Z_{d,r-1}^{r-1}})^2\cdot\cdots\cdot
(I_{Z_{d,1}^{r-1}})^{r} \cdot (I_{Z_{d,0}^{r-1}})^{r+1}$. Therefore, the blowup of $Q_d^{r-1}$ along $Z_{d,r}^{r-1}$ is the same as the blowup along the total transform $b^{-1}(Z_{d,r})$, i.e., $Q_d^r=\Bl_{b^{-1}(Z_{d,r})}Q_d^{r-1}$. Since $Z_{d,r}$ is the schematic zero locus of $\bigwedge^{\widehat m+r+1}\rho_{d,m}$ on $Q_d$, $b^{-1}(Z_{d,r})$ is the schematic zero locus of $\bigwedge^{\widehat m+r+1}b^*\rho_{d,m}$ on $Q_d^{r-1}$. So $\Bl_{b^{-1}(Z_{d,r})}Q_d^{r-1}$ is the closure of the image of the embedding induced by $\bigwedge^{\widehat m+r+1}b^*\rho_{d,m}$:
\[
Q_d^{r-1}\setminus b^{-1}(Z_{d,r})\embed \mathbb{S}_{\widehat m+r}(b^*V_{\mathbb{P}^1_{Q_d}}^\vee,b^*\mathcal{E}_d^\vee)=Q_d^{r-1}\times_{Q_d}\mathbb{S}_{\widehat m+r}(V_{\mathbb{P}^1_{Q_d}}^\vee,\mathcal{E}_d^\vee)
\]
Thus we see that the proof can be completed by induction on $r$.
\end{proof}

Note that the closure of the image of the map
\[
[\bigwedge^{\widehat m+l+1}\rho_{d,m}]: Q_d\setminus Z_{d,r}\embed \BBS_{\widehat m+l}(\pi_*V_{\mathbb{P}^1_{Q_d}}^\vee(m), \pi_*\mathcal{E}_d^\vee(m))
\]
is exactly the blowup $\Bl_{Z_{d,l}}Q_d$. Thus we obtain an easy corollary.
\begin{cor}
The blowup $Q_d^r$ is isomorphic to the closure of $Q_d\setminus Z_{d,r}$ in the product
\[
\Bl_{Z_{d,0}}Q_d\times_{Q_d}\cdots\times_{Q_d}\Bl_{Z_{d,r}}Q_d
\]
\end{cor}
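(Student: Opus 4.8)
The plan is to read this off directly from the preceding theorem together with the remark that each $\Bl_{Z_{d,l}}Q_d$ is realized as a closed subscheme of $\BBS_{\widehat m+l}(\pi_*V_{\mathbb{P}^1_{Q_d}}^\vee(m),\pi_*\mathcal{E}_d^\vee(m))$, namely as the closure of the image of $[\bigwedge^{\widehat m+l+1}\rho_{d,m}]$. Fix $m\geq m_d$ and abbreviate $\BBS_l:=\BBS_{\widehat m+l}(\pi_*V_{\mathbb{P}^1_{Q_d}}^\vee(m),\pi_*\mathcal{E}_d^\vee(m))$ and $P:=\prod_{l=0}^r{}_{Q_d}\BBS_l$. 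By the previous theorem, $Q_d^r$ is the closure of the image of $\prod_{l=0}^r[\bigwedge^{\widehat m+l+1}\rho_{d,m}]$ inside $P$. Since $Q_d$ is irreducible and $Z_{d,r}$ is a proper closed subscheme, $Q_d\setminus Z_{d,r}$ is dense, hence so is its image.

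First I would observe that for each $l\leq r$ the $l$-th component of this product map factors through $\Bl_{Z_{d,l}}Q_d\hookrightarrow\BBS_l$. Indeed, by the remark, $\Bl_{Z_{d,l}}Q_d$ is the closure of the image of $[\bigwedge^{\widehat m+l+1}\rho_{d,m}]\colon Q_d\setminus Z_{d,l}\embed\BBS_l$, and $Q_d\setminus Z_{d,l}$ sits inside $\Bl_{Z_{d,l}}Q_d$ as an open subscheme over $Q_d$; since $Z_{d,l}\subset Z_{d,r}$, the open $Q_d\setminus Z_{d,r}$ also maps into $\Bl_{Z_{d,l}}Q_d$ compatibly with the map to $\BBS_l$. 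Because closed immersions are stable under base change and composition, the fiber product over $Q_d$
\[
P':=\Bl_{Z_{d,0}}Q_d\times_{Q_d}\cdots\times_{Q_d}\Bl_{Z_{d,r}}Q_d
\]
is a closed subscheme of $P$, and the embedding $\prod_{l=0}^r[\bigwedge^{\widehat m+l+1}\rho_{d,m}]$ of $Q_d\setminus Z_{d,r}$ factors through $P'$, where it agrees with the natural immersion $Q_d\setminus Z_{d,r}\hookrightarrow P'$ induced by the open inclusions $Q_d\setminus Z_{d,r}\hookrightarrow\Bl_{Z_{d,l}}Q_d$.

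It then remains to note that closure commutes with restriction along a closed immersion: if $U$ is a (locally closed) subscheme of a closed subscheme $P'\subset P$, its closure in $P$ coincides with its closure in $P'$. Here $Q_d^r$ and each $\Bl_{Z_{d,l}}Q_d$ are integral, so all closures in sight are the reduced-induced structures on the corresponding closed subsets and the claim is purely topological; if one prefers to avoid appealing to integrality, the same identification follows from the functoriality of the scheme-theoretic image under pullback along quasi-compact closed immersions. Taking $U$ to be the image of $Q_d\setminus Z_{d,r}$, we conclude that $Q_d^r$, being the closure of $U$ in $P$, is the closure of $U$ in $P'$, i.e. the closure of $Q_d\setminus Z_{d,r}$ in $\Bl_{Z_{d,0}}Q_d\times_{Q_d}\cdots\times_{Q_d}\Bl_{Z_{d,r}}Q_d$.

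I do not expect a genuine obstacle in this argument, since all the substance is already contained in the preceding theorem and the remark following it; the only points requiring care are purely bookkeeping: checking that the component embeddings are compatible over $Q_d$ so that the product map really lands in $P'$, and keeping track of scheme structures on the closures. If one wishes to be completely clean about the latter, the shortest route is to phrase the whole comparison in terms of scheme-theoretic images, which are preserved under the closed immersion $P'\hookrightarrow P$.
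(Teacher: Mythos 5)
Your argument is correct and follows the same route the paper intends: the preceding theorem realizes $Q_d^r$ as a closure inside $\prod_{l}{}_{Q_d}\BBS_{\widehat m+l}$, each factor of which contains $\Bl_{Z_{d,l}}Q_d$ as the closed subscheme swept out by the corresponding component of the embedding, so the closure may be taken inside the fiber product of the blowups. The paper leaves exactly these bookkeeping steps (the product of closed immersions being a closed immersion, and closure commuting with restriction to a closed subscheme) implicit, and your write-up supplies them correctly.
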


Next we claim that we have a commutative diagram
\begin{equation}\label{ZembedDiagram}
\xymatrix{
\mathring{Q}_{d,r} \ar[d]_{\mathring{\varphi}} \ar@{^(->}[r]^-{\alpha} &
\ds\prod_{l=0}^{r-1}{}_{Q_{d,r}} \BBS_{\widehat{m}+l}\big(\theta^*\pi_*V_{\mathbb{P}^1_{Q_{r}}}^\vee(m),\theta^*\pi_*\mathcal{E}_{r}^\vee(m)\big) \ar@{^(->}[d]^{\beta} \\
\mathring{Z}_{d,r} \ar@{^(->}[r]^-{\gamma} & \ds\prod_{l=0}^{r-1}{}_{Q_d}
\BBS_{\widehat{m}+l}\big(\pi_*V_{\mathbb{P}^1_{Q_d}}^\vee(m),\pi_*\mathcal{E}_d^\vee(m)\big)
}
\end{equation}
for all $m\gg 0$ ($m\geq m_d$ is not sufficient). The map $\alpha$ is induced by the nowhere vanishing sections $\bigwedge^{\widehat m+l+1}\theta^*\rho_{r,m}$, $l=0,\dots,r-1$, on $\mathring{Q}_{d,r}$. We see that it is the pullback by $\theta:Q_{d,r}\to Q_r$ of the embedding $\mathring{Q}_r\embed \prod_{l=0}^{r-1}{}_{Q_{d,r}} \BBS_{\widehat{m}+l}\big(\theta^*\pi_*V_{\mathbb{P}^1_{Q_{r}}}^\vee(m),\theta^*\pi_*\mathcal{E}_{r}^\vee(m)\big)$ induced by the sections $\bigwedge^{\widehat m+l+1}\rho_{r,m}$:
\[
\begin{split}
\mathring{Q}_{d,r}=Q_{d,r}\times_{Q_{r}}\mathring{Q}_{r}
&\hookrightarrow Q_{d,r}\times_{Q_{r}}\prod_{l=0}^{r-1}{}_{Q_{r}}
\BBS_{\widehat{m}+l}\big(\pi_*V_{\mathbb{P}^1_{
Q_{r}}}^\vee(m),\pi_*\mathcal{E}_{r}^\vee(m)\big)=\\ &\prod_{l=0}^{r-1}{}_{Q_{d,r}} \BBS_{\widehat{m}+l}\big(\theta^*\pi_*V_{\mathbb{P}^1_{Q_{r}}}^\vee(m),\theta^*\pi_*\mathcal{E}_{r}^\vee(m)\big)
\end{split}
\]
Thus $\alpha$ is an embedding, and the closure of the image of $\alpha$ is exactly
$Q_{d,r}\times_{Q_{r}}Q_{r}^{r-1}$. The map $\gamma$ is defined as the composition:
\[
\mathring{Z}_{d,r}\hookrightarrow Q_d\setminus Z_{d,r-1}\hookrightarrow
\prod_{l=0}^{r-1}{}_{Q_d}\BBS_{\widehat{m}+l}\big(\pi_*V_{\mathbb{P}^1_{Q_d}}^\vee(m),\pi_*\mathcal{E}_d^\vee(m)\big)
\]
where the second map is induced by the nowhere vanishing sections $\bigwedge^{\widehat m+l+1}\rho_{d,m}$, $l=0,\dots,r-1$.
Hence $\gamma$ is also an embedding, and the closure of the image of $\gamma$ is exactly
$Z_{d,r}^{r-1}$.

To complete the diagram (\ref{ZembedDiagram}), we also need
\begin{prop}\label{beta_exist}
There exists an embedding $\beta$ which makes the diagram (\ref{ZembedDiagram}) commute.
\end{prop}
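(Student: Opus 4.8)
The plan is to construct $\beta$ from the morphism $\phi\colon Q_{d,r}\to Q_d$ of Section 4 together with cohomology and base change, to read commutativity off the filtration $\mathcal{E}_{d,r}\subset\bar\theta^*\mathcal{E}_r\subset V_{\mathbb{P}^1_{Q_{d,r}}}$, and to prove $\beta$ is an embedding by reconstructing the $Q_{d,r}$-coordinate from the top exterior-power factor $l=r-1$.

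First I would fix $m$ large (this is precisely where $m\geq m_d$ will not suffice). For $m\gg0$, cohomology and base change (Theorem \ref{quotembedding}) gives canonical identifications $\phi^*\pi_*\mathcal{E}_d^\vee(m)=\pi_*\mathcal{E}_{d,r}^\vee(m)$ and $\theta^*\pi_*\mathcal{E}_r^\vee(m)=\pi_*\bar\theta^*\mathcal{E}_r^\vee(m)$, with $\phi^*\pi_*V_{\mathbb{P}^1_{Q_d}}^\vee(m)=\theta^*\pi_*V_{\mathbb{P}^1_{Q_r}}^\vee(m)$ the free sheaf $V_{Q_{d,r}}^\vee\otimes H^0(\mathbb{P}^1,\mathcal{O}(m))$. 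Dualizing (\ref{UniversalExactSequence2}), twisting by $m$ and applying $\pi_*$ produces, for $m\gg0$, a short exact sequence of locally free sheaves
\[
0\to\theta^*\pi_*\mathcal{E}_r^\vee(m)\to\phi^*\pi_*\mathcal{E}_d^\vee(m)\to\pi_*\mathcal{T}_{d,r}^\e(m)\to0
\]
on $Q_{d,r}$, with last term locally free of rank $d-r$ because $\mathcal{T}_{d,r}^\e$ is flat over $Q_{d,r}$ (Proposition \ref{FlatTorsion}). For each $l=0,\dots,r-1$, applying $\bigwedge^{\widehat m+l+1}$, then $\HHom$ from $\bigwedge^{\widehat m+l+1}$ of the common free sheaf, then $\mathbb{P}(-)$, turns this subbundle inclusion into a closed embedding over $Q_{d,r}$ of $\BBS_{\widehat m+l}(\theta^*\pi_*V_{\mathbb{P}^1_{Q_r}}^\vee(m),\theta^*\pi_*\mathcal{E}_r^\vee(m))$ into $\phi^*\BBS_{\widehat m+l}(\pi_*V_{\mathbb{P}^1_{Q_d}}^\vee(m),\pi_*\mathcal{E}_d^\vee(m))$; the fibre product over $Q_{d,r}$ of these, followed by the projection off the $Q_{d,r}$-factor, defines the $Q_d$-morphism $\beta$.

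Commutativity is then formal. The composite $V_{\mathbb{P}^1_{Q_{d,r}}}^\vee\to\bar\theta^*\mathcal{E}_r^\vee\to\mathcal{E}_{d,r}^\vee$ coming from the filtration is the dual of $\mathcal{E}_{d,r}\hookrightarrow V_{\mathbb{P}^1_{Q_{d,r}}}$, so after $\pi_*(-)(m)$ and $\bigwedge^{\widehat m+l+1}$ the section $\bigwedge^{\widehat m+l+1}\phi^*\rho_{d,m}$ equals $\bigwedge^{\widehat m+l+1}\theta^*\rho_{r,m}$ composed with the subbundle inclusion just used. Since $\alpha$ is the embedding of $\mathring{Q}_{d,r}$ cut out by the sections $\bigwedge^{\widehat m+l+1}\theta^*\rho_{r,m}$, the composite $\beta\alpha$ is cut out by the sections $\bigwedge^{\widehat m+l+1}\phi_0^*\rho_{d,m}$; using $i\mathring{\varphi}=\phi_0$ and the fact that $\gamma$ is cut out by the sections $\bigwedge^{\widehat m+l+1}\rho_{d,m}$, this is exactly $\gamma\mathring{\varphi}$, so (\ref{ZembedDiagram}) commutes.

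Finally, $\beta$ is a $Q_d$-morphism from a scheme projective over $Q_d$ to a scheme separated over $Q_d$, hence proper, so it suffices to check $\beta$ is a monomorphism (a proper monomorphism being a closed embedding). This is the main obstacle, and I would handle it using the factor $l=r-1$, where $\bigwedge^{\widehat m+r}\theta^*\pi_*\mathcal{E}_r^\vee(m)=\det(\theta^*\pi_*\mathcal{E}_r^\vee(m))$ is a line bundle: its inclusion into $\bigwedge^{\widehat m+r}\phi^*\pi_*\mathcal{E}_d^\vee(m)$ is classified by a $Q_d$-morphism $\Pi\colon Q_{d,r}\to\mathbb{P}(\bigwedge^{\widehat m+r}\pi_*\mathcal{E}_d^\vee(m))$ that factors as $Q_{d,r}\to\Gr_{Q_d}(\pi_*\mathcal{E}_d^\vee(m),d-r)\hookrightarrow\mathbb{P}(\bigwedge^{\widehat m+r}\pi_*\mathcal{E}_d^\vee(m))$, the first map classifying the quotient $\pi_*\mathcal{T}_{d,r}^\e(m)$ and the second being the relative Pl\"ucker embedding. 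The key claim is that the classifying map $Q_{d,r}\to\Gr_{Q_d}(\pi_*\mathcal{E}_d^\vee(m),d-r)$ is a closed embedding for $m\gg0$: the torsion quotient $\bar\phi^*\mathcal{E}_d^\vee\twoheadrightarrow\mathcal{T}_{d,r}^\e$ is flat over $Q_{d,r}$ with $\mathcal{T}_{d,r}^{\e\e}=\mathcal{T}_{d,r}$ (Proposition \ref{FlatTorsion}), which exhibits $Q_{d,r}$ as a closed subscheme of the relative Quot scheme $\Quot^{0,d-r}_{\mathcal{E}_d^\vee/\mathbb{P}^1_{Q_d}/Q_d}$ (a monomorphism check on $T$-points, carried out by dualizing), and Theorem \ref{quotembedding} embeds the latter into $\Gr_{Q_d}(\pi_*\mathcal{E}_d^\vee(m),d-r)$ once $m$ is large. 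Granting this, $\beta$ is a monomorphism: from the image of an $S$-point $(g;\mathcal{L}_0,\dots,\mathcal{L}_{r-1})$ of the source, the $(r-1)$st component recovers the sub-line-bundle $g^*\det(\theta^*\pi_*\mathcal{E}_r^\vee(m))\subset(\phi g)^*\bigwedge^{\widehat m+r}\pi_*\mathcal{E}_d^\vee(m)$ as the image of the coordinate projections of the image line bundle (a subsheaf of a line bundle which is full rank on every fibre), hence recovers $\Pi g$ and therefore $g$; each remaining $\mathcal{L}_l$ is then the unique preimage of its image under an injective bundle map. Thus $\beta$ is a proper monomorphism, hence an embedding.
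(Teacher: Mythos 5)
Your construction follows the same skeleton as the paper's proof: the dual of the universal sequence of $Q_{d,r}$ yields the subbundle inclusion $\theta^*\pi_*\mathcal{E}_r^\vee(m)\hookrightarrow\phi^*\pi_*\mathcal{E}_d^\vee(m)$ with locally free cokernel $\pi_*\mathcal{T}_{d,r}^\e(m)$; the factors $l\le r-2$ are handled by subbundle maps of $\HHom$'s; and injectivity is carried entirely by the $l=r-1$ factor through the Pl\"ucker point of the classifying map $Q_{d,r}\to\Gr_{Q_d}(\widehat m+r,\pi_*\mathcal{E}_d^\vee(m))$, which factors through $\Quot^{0,d-r}_{\mathcal{E}_d^\vee/\mathbb{P}^1_{Q_d}/Q_d}$ --- exactly the paper's space $Q_{d^\vee,r}$ and its decomposition $\beta=\lambda\widetilde\iota\widetilde\eta$. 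Where you genuinely diverge is in how the two embedding claims are certified. The paper proves that $\eta:Q_{d,r}\to Q_{d^\vee,r}$ is a closed embedding (Proposition \ref{eta}) by constructing an explicit inverse on a flattening stratum $Z\subset Q_{d^\vee,r}$, and then builds $\beta$ as a composition of named closed embeddings (including the relative Pl\"ucker and Segr\'e embeddings); you instead invoke ``proper $+$ monomorphism $\Rightarrow$ closed immersion'' twice, once for $\eta$ and once globally for $\beta$, recovering the $Q_{d,r}$-coordinate from the last factor. This is a legitimate and arguably more economical packaging, and your commutativity argument via $\phi^*\rho_{d,m}=(\text{subbundle inclusion})\circ\theta^*\rho_{r,m}$ is the same in substance as the paper's chase with the universal homomorphisms of the collineation spaces. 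One caution: the parenthetical ``a monomorphism check on $T$-points, carried out by dualizing'' compresses essentially all of the content of the paper's Proposition \ref{eta}. To see that two $T$-points of $Q_{d,r}$ with the same image in $Q_{d^\vee,r}$ coincide, you must recover not only the abstract quotient $\bar f^*\mathcal{E}_r\surj\mathcal{T}$ (which uses $\mathcal{T}^{\e\e}=\mathcal{T}$, Proposition \ref{FlatTorsion}) but also the $Q_r$-point itself, i.e.\ the inclusion $\bar f^*\mathcal{E}_r\hookrightarrow V_{\mathbb{P}^1_T}$; the latter is determined by its restriction to $\bar\phi^*\mathcal{E}_d$ only because $\Hom(\mathcal{T},V_{\mathbb{P}^1_T})=0$ for a $T$-flat torsion sheaf $\mathcal{T}$ (locally, every section of $\mathcal{T}$ is killed by a monic polynomial, and monic polynomials are nonzerodivisors on a locally free sheaf). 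With those details written out, your argument is sound.
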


Because it takes much space to define the map $\beta$ and its exact definition will not be used in the proofs of the following theorems, we leave the proof of Proposition \ref{beta_exist} to the next section. For the moment, we assume this proposition is true.
\begin{prop}
The embedding $\beta$ maps $Q_{d,r}\times_{Q_r}Q_r^{r-1}$ isomorphically onto $Z_{d,r}^{r-1}$.
\end{prop}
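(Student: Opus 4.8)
The plan is to deduce the statement formally from the commutativity of diagram (\ref{ZembedDiagram}), the fact that $\mathring{\varphi}$ is an isomorphism, and the two facts already recorded above: that the closure of the image of $\alpha$ equals $Q_{d,r}\times_{Q_r}Q_r^{r-1}$ and that the closure of the image of $\gamma$ equals $Z_{d,r}^{r-1}$ (all closures being scheme-theoretic, as those subschemes were introduced).

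First I would check that $\beta$ is a \emph{closed} embedding. Its source $\prod_{l=0}^{r-1}{}_{Q_{d,r}}\BBS_{\widehat{m}+l}(\theta^*\pi_*V_{\mathbb{P}^1_{Q_r}}^\vee(m),\theta^*\pi_*\mathcal{E}_r^\vee(m))$ is a fiber product of projective bundles over $Q_{d,r}$, hence projective over $Q_{d,r}$, and since $Q_{d,r}$ is projective it is proper over $\Bbbk$; its target is separated over $\Bbbk$. A $\Bbbk$-morphism from a proper $\Bbbk$-scheme to a separated $\Bbbk$-scheme is proper, and a proper immersion is a closed immersion. Consequently $\beta$ carries closed subschemes to closed subschemes and, for the quasi-compact morphism $\alpha$, commutes with taking scheme-theoretic image: the scheme-theoretic image of $\beta\circ\alpha$ is $\beta$ applied to the scheme-theoretic image of $\alpha$, namely $\beta(Q_{d,r}\times_{Q_r}Q_r^{r-1})$.

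On the other hand, by the commutativity of (\ref{ZembedDiagram}) we have $\beta\circ\alpha=\gamma\circ\mathring{\varphi}$, and since $\mathring{\varphi}$ is an isomorphism this morphism has the same scheme-theoretic image as $\gamma$, which is $Z_{d,r}^{r-1}$. Comparing the two computations yields $\beta(Q_{d,r}\times_{Q_r}Q_r^{r-1})=Z_{d,r}^{r-1}$ as closed subschemes of $\prod_{l=0}^{r-1}{}_{Q_d}\BBS_{\widehat{m}+l}(\pi_*V_{\mathbb{P}^1_{Q_d}}^\vee(m),\pi_*\mathcal{E}_d^\vee(m))$. Since the restriction of the closed immersion $\beta$ to the closed subscheme $Q_{d,r}\times_{Q_r}Q_r^{r-1}$ is again a closed immersion, it is an isomorphism onto this image $Z_{d,r}^{r-1}$, which is exactly the assertion.

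The only point requiring care is the formal one just used — that $\beta$ is a closed immersion and hence compatible with scheme-theoretic images — together with the (already established, but implicitly needed) input that $\mathring{Q}_{d,r}$ and $\mathring{Z}_{d,r}$ are schematically dense in $Q_{d,r}\times_{Q_r}Q_r^{r-1}$ and $Z_{d,r}^{r-1}$ respectively, so that the "closures of the images" of $\alpha$ and $\gamma$ really are these subschemes. I do not anticipate any substantive difficulty beyond relying on Proposition \ref{beta_exist}, i.e.\ on the fact that the construction of $\beta$ in the next section does make (\ref{ZembedDiagram}) commute.
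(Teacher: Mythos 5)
Your proposal is correct and follows essentially the same route as the paper: identify the closure of the image of $\beta\alpha$ with $\beta$ applied to the closure of the image of $\alpha$ (namely $Q_{d,r}\times_{Q_r}Q_r^{r-1}$), identify the closure of the image of $\gamma\mathring{\varphi}$ with $Z_{d,r}^{r-1}$ using that $\mathring{\varphi}$ is an isomorphism, and conclude via the commutativity of Diagram (\ref{ZembedDiagram}). Your extra verification that $\beta$ is a \emph{closed} immersion (via properness of the source over $\Bbbk$) is a worthwhile precision that the paper leaves implicit --- there $\beta$ is in fact constructed in the final section as a composite $\lambda\widetilde\iota\widetilde\eta$ of closed embeddings --- but it does not change the argument.
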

\begin{proof}
We know the closure of the image of $\alpha$ is $Q_{d,r}\times_{Q_r}Q_r^{r-1}$. Since $\beta$ is an embedding, the closure of the image of $\beta\alpha$ is isomorphic to $Q_{d,r}\times_{Q_r}Q_r^{r-1}$. On the other hand, since $\mathring{\varphi}$ is an isomorphism, the closure of the image of $\gamma\mathring{\varphi}$ is $Z_{d,r}^{r-1}$. By the commutativity of the diagram (\ref{ZembedDiagram}), we have an isomorphism $Q_{d,r}\times_{Q_r}Q_r^{r-1}\cong Z_{d,r}^{r-1}$ given by $\beta$.
\end{proof}

We are now ready to prove the first the main result.
\begin{thm}\label{MainTheorem}
For $0\leq r\leq d-1$, $Z_{d,r}^{r-1}$ and $Q_d^r$ are both nonsingular.
\end{thm}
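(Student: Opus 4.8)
The plan is to prove both assertions simultaneously by a double induction: an outer induction on the degree $d$ and, for each fixed $d$, an inner induction on $r$ running from $0$ up to $d-1$. The inputs have all been prepared upstream: the isomorphism $Z_{d,r}^{r-1}\cong Q_{d,r}\times_{Q_r}Q_r^{r-1}$ provided by $\beta$ in the preceding proposition, the smoothness (of relative dimension $k(d-r)$) of the structure morphism $\theta\colon Q_{d,r}\to Q_r$, the nonsingularity of the Quot scheme $Q_d$ itself, and the nonsingularity of $Z_{d,0}$ recorded as a corollary at the end of the previous section. I will also use two standard facts: a scheme smooth over a nonsingular variety is nonsingular, and the blowup of a nonsingular noetherian scheme along a nonsingular closed subscheme is nonsingular (the subscheme being automatically regularly immersed).

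First I would treat the inner base case $r=0$, which simultaneously handles the outer base case $d=1$. Here $Q_d^{-1}=Q_d$ is nonsingular and $Z_{d,0}^{-1}=Z_{d,0}$ is nonsingular, so $Q_d^{0}=\Bl_{Z_{d,0}}Q_d$, being the blowup of a nonsingular variety along a nonsingular center, is nonsingular. This step invokes nothing from the outer induction.

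For the inner inductive step fix $r$ with $1\le r\le d-1$ and assume, as the inner hypothesis, that $Q_d^{r-1}$ is nonsingular. By the isomorphism $Z_{d,r}^{r-1}\cong Q_{d,r}\times_{Q_r}Q_r^{r-1}$ it suffices to prove that this fiber product is nonsingular. The base change $Q_{d,r}\times_{Q_r}Q_r^{r-1}\to Q_r^{r-1}$ of the smooth morphism $\theta$ is smooth, and $Q_r^{r-1}$ is nonsingular by the outer inductive hypothesis --- legitimately, since $r<d$ and the index $r-1$ lies in the range $0\le r-1\le r-1$ to which the theorem for degree $r$ applies. Hence $Z_{d,r}^{r-1}$, being smooth over a nonsingular variety, is nonsingular, and then $Q_d^{r}=\Bl_{Z_{d,r}^{r-1}}Q_d^{r-1}$ is the blowup of the nonsingular $Q_d^{r-1}$ along the nonsingular closed subscheme $Z_{d,r}^{r-1}$, hence nonsingular. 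This completes the induction.

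The substance of the argument is carried by the earlier results rather than by this induction: the delicate point was to identify the blowup center $Z_{d,r}^{r-1}$ --- a proper transform defined by a somewhat opaque determinantal ideal --- with the transparent object $Q_{d,r}\times_{Q_r}Q_r^{r-1}$, via the chain of results from Proposition \ref{IdealFactor1} through the commutative diagram (\ref{ZembedDiagram}) and the isomorphism $\beta$. Granting that, the only thing demanding care here is the shape of the recursion: because $Q_{d,r}$ sits over $Q_r$ and the center involves $Q_r^{r-1}$ --- the \emph{final} stage of the degree-$r$ construction --- one cannot run the argument for a single fixed $d$ in isolation, and it is precisely the outer induction on $d$ that makes the nonsingularity of $Q_r^{r-1}$ for all $r<d$ available. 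I anticipate no analytic or computational obstacle beyond getting this organization right.
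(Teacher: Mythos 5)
Your proof is correct and follows essentially the same route as the paper: both rest on the isomorphism $Z_{d,r}^{r-1}\cong Q_{d,r}\times_{Q_r}Q_r^{r-1}$ together with the smoothness of $\theta\colon Q_{d,r}\to Q_r$, and both arrange the induction so that the nonsingularity of $Q_r^{r-1}$ (the final stage of the degree-$r$ tower) is available when treating degree $d>r$. The paper phrases this as a single induction on $r$ with the statement quantified over all $d\geq r+1$, which is logically the same bookkeeping as your outer-on-$d$, inner-on-$r$ double induction.
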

\begin{proof}
We prove this by induction on $r$ (but for all $d\geq r+1$). When $r=0$, $Z_{d,r}^{r-1}=Z_{d,0}$ is nonsingular, hence $Q_d^0$, the blowup of $Q_d$ along $Z_{d,0}$, is also nonsingular. Assume the statement is true for $r=N-1$, that is, $Z_{d,N-1}^{N-2}$ and $Q_d^{N-1}$ are nonsingular for all $d\geq N$. We prove that the statement is also true for $r=N$. By the above theorem, $Z_{d,N}^{N-1}\cong Q_{d,N}\times_{Q_{N}}Q_N^{N-1}$, and by induction hypothesis, $Q_N^{N-1}$ is nonsingular, hence $Z_{d,N}^{N-1}$ is nonsingular. $Q_d^N$ is nonsingular because it is the blowup of the nonsingular variety $Q_d^{N-1}$ along the nonsingular subvariety $Z_{d,N}^{N-1}$. This completes the proof.
\end{proof}

\begin{prop}\label{isobybeta}
The isomorphism $\beta: Q_{d,r}\times_{Q_{r}}Q_{r}^{r-1}\to Z_{d,r}^{r-1}$ induces isomorphisms
\[
\beta: Q_{d,r}\times_{Q_{r}}Z_{r,l}^{r-1}\stackrel{\simeq}{\longrightarrow} Z_{d,r}^{r-1}\cap Z_{d,l}^{r-1},\quad\text{ for } l=0,\cdots,r-1.
\]
\end{prop}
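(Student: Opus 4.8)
The plan is to recast the statement as an equality of invertible ideal sheaves on $W:=Q_{d,r}\times_{Q_r}Q_r^{r-1}$ and prove it by induction on $l$, by comparing how the ideals $I_{Z_{d,l}}$ and $I_{Z_{r,l}}$ propagate up the two towers of blowups and then transporting the comparison along $\beta$.

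First I would reduce. Write $\mathrm{pr}_1\colon W\to Q_{d,r}$, $\mathrm{pr}_2\colon W\to Q_r^{r-1}$ for the two projections and identify $W$ with $Z_{d,r}^{r-1}$ via the isomorphism $\beta$ of the previous proposition. Since $Q_r^{r-1}$ is nonsingular (Theorem \ref{MainTheorem}), $Z_{r,l}^{r-1}$ is an effective Cartier divisor there, and as $\mathrm{pr}_2$ is smooth (base change of the smooth $\theta$) the subscheme $Q_{d,r}\times_{Q_r}Z_{r,l}^{r-1}=\mathrm{pr}_2^{-1}(Z_{r,l}^{r-1})$ is cut out in $W$ by the invertible ideal $I_{Z_{r,l}^{r-1}}\cdot\mathcal{O}_W$; similarly $Z_{d,r}^{r-1}\cap Z_{d,l}^{r-1}$ is cut out in $Z_{d,r}^{r-1}$ by $I_{Z_{d,l}^{r-1}}\cdot\mathcal{O}_{Z_{d,r}^{r-1}}$. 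So it suffices to prove
\[
\beta^{*}\big(I_{Z_{d,l}^{r-1}}\cdot\mathcal{O}_{Z_{d,r}^{r-1}}\big)=I_{Z_{r,l}^{r-1}}\cdot\mathcal{O}_W,\qquad 0\le l\le r-1 .
\]

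Second, I would pin down the ideal of the proper transform in each tower. Following the bookkeeping behind Corollary \ref{IdealFactor} (apply that corollary at stage $l$, then carry the result through the blowups at stages $l+1,\dots,r-1$), and using that each later center $Z_{d,i'}^{i'-1}$ with $i'>l$ is \emph{not} contained in $Z_{d,i}^{\bullet}$ for $i\le l$ — because the increasing nesting $Z_{d,0}\subset Z_{d,1}\subset\cdots$ forces $Z_{d,i'}^{i'-1}$ to meet the fibre over the nonempty locus $|Z_{d,i'}|\setminus|Z_{d,i}|$, which lies off $Z_{d,i}^{\bullet}$, so those divisors pick up no further multiplicity — one obtains
\[
I_{Z_{d,l}}\cdot\mathcal{O}_{Q_d^{r-1}}=I_{Z_{d,l}^{r-1}}\cdot\prod_{i=0}^{l-1}\big(I_{Z_{d,i}^{r-1}}\big)^{\,l-i+1},
\]
and the identical argument in the $Q_r$-tower gives $I_{Z_{r,l}}\cdot\mathcal{O}_{Q_r^{r-1}}=I_{Z_{r,l}^{r-1}}\cdot\prod_{i=0}^{l-1}\big(I_{Z_{r,i}^{r-1}}\big)^{\,l-i+1}$. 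Next I would check that the composite $W\xrightarrow{\beta}Z_{d,r}^{r-1}\hookrightarrow Q_d^{r-1}\to Q_d$ equals $\phi\circ\mathrm{pr}_1$: both are morphisms from the reduced (indeed nonsingular) scheme $W$ to the separated scheme $Q_d$, and by the commutativity of diagram (\ref{ZembedDiagram}) together with the fact that $\mathring{\varphi}$ is an isomorphism they agree on the dense open $\mathring{Q}_{d,r}\subset W$; also $\theta\circ\mathrm{pr}_1$ equals $\mathrm{pr}_2$ followed by the composite blowup $Q_r^{r-1}\to Q_r$ (tautologically). Pulling the two tower identities back to $W$ along these maps and invoking Proposition \ref{IdealEquality} ($I_{Z_{d,l}}\cdot\mathcal{O}_{Q_{d,r}}=I_{Z_{r,l}}\cdot\mathcal{O}_{Q_{d,r}}$) makes the two left-hand sides become the same ideal of $\mathcal{O}_W$, whence
\[
\beta^{*}\big(I_{Z_{d,l}^{r-1}}\cdot\mathcal{O}_{Z_{d,r}^{r-1}}\big)\cdot\prod_{i=0}^{l-1}\beta^{*}\big(I_{Z_{d,i}^{r-1}}\cdot\mathcal{O}_{Z_{d,r}^{r-1}}\big)^{l-i+1}=\big(I_{Z_{r,l}^{r-1}}\cdot\mathcal{O}_W\big)\cdot\prod_{i=0}^{l-1}\big(I_{Z_{r,i}^{r-1}}\cdot\mathcal{O}_W\big)^{l-i+1}.
\]

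Finally I would conclude by induction on $l$. For $l=0$ there is no correction product and the displayed identity is already the one wanted. For $l\ge 1$ the inductive hypothesis identifies the two products, and each factor $I_{Z_{d,i}^{r-1}}\cdot\mathcal{O}_{Z_{d,r}^{r-1}}$ (resp. $I_{Z_{r,i}^{r-1}}\cdot\mathcal{O}_W$) is an invertible ideal on the integral scheme $Z_{d,r}^{r-1}\cong W$: $W$ is irreducible ($Q_{d,r}$ is irreducible and $Q_r^{r-1}\to Q_r$ is birational) and nonsingular (Theorem \ref{MainTheorem}), and $Z_{d,r}^{r-1}\not\subseteq Z_{d,i}^{r-1}$ by the same fibre argument as above, so the restriction of a local equation of $Z_{d,i}^{r-1}$ is a nonzerodivisor there. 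Cancelling this common invertible factor gives $\beta^{*}(I_{Z_{d,l}^{r-1}}\cdot\mathcal{O}_{Z_{d,r}^{r-1}})=I_{Z_{r,l}^{r-1}}\cdot\mathcal{O}_W$, and passing to zero schemes yields the asserted isomorphism $\beta\colon Q_{d,r}\times_{Q_r}Z_{r,l}^{r-1}\xrightarrow{\ \simeq\ }Z_{d,r}^{r-1}\cap Z_{d,l}^{r-1}$. The step I expect to be the main obstacle is the second one — extending the computation underlying Corollary \ref{IdealFactor} so that it also tracks the already-created exceptional divisors through all subsequent blowups; the geometric fact that makes this clean is that, since the $Z_{d,\bullet}$ are blown up in order of increasing nesting, each later center misses each earlier exceptional divisor generically. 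Everything after that — the compatibility of the maps into $Q_d$, and the cancellation — is routine given the nonsingularity already in hand.
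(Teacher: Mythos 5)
Your proof is correct and follows essentially the same route as the paper's: reduce the statement to the ideal-sheaf identity $I_{Z_{d,l}^{r-1}}\cdot\mathcal{O}_{Q_{d,r}\times_{Q_r}Q_r^{r-1}}=I_{Z_{r,l}^{r-1}}\cdot\mathcal{O}_{Q_{d,r}\times_{Q_r}Q_r^{r-1}}$, compare the two blowup towers by pushing the factorization of Corollary \ref{IdealFactor} up to level $r-1$ and transporting it across Proposition \ref{IdealEquality}, then cancel the common invertible factors by induction on $l$. The only difference is expository: you state explicitly (and correctly justify) two points the paper leaves implicit, namely that the factorization persists through the later blowups because each later center is not contained in the earlier exceptional divisors, and that cancelling the invertible ideals is legitimate on the integral scheme $Z_{d,r}^{r-1}$.
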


\begin{proof}
For simplicity, we put $Q_{d,r}^{r-1}:=Q_{d,r}\times_{Q_{r}} Q_{r}^{r-1}$. Consider the following
commutative diagram
\[
\CDR{
  Q_{r}^{r-1}\ar[d] & Q_{d,r}^{r-1}\ar[l]\ar[d]\ar[r] & Q_d^{r-1}\ar[d] \\
  Q_{r} & Q_{d,r}\ar[l]_\theta\ar[r]^{\phi} & Q_d
}
\]
The ideal sheaf of $Q_{d,r}\times_{Q_{r}}Z_{r,l}^{r-1}$ in $Q_{d,r}^{r-1}$ is $I_{Z_{r,l}^{r-1}}\cdot\mathcal{O}_{Q_{d,r}^{r-1}}$. Because of the isomorphism $\beta:Q_{d,r}\times_{Q_{r}}Q_{r}^{r-1}\to Z_{d,r}^{r-1}$, we see that the ideal sheaf of the preimage $\beta^{-1}(Z_{d,r}^{r-1}\cap Z_{d,l}^{r-1})$ in $Q_{d,r}^{r-1}$ is $I_{Z_{d,l}^{r-1}}\cdot \mathcal{O}_{Q_{d,r}^{r-1}}$. Thus to show that $\beta$ induces isomorphisms $Q_{d,r}\times_{Q_{r}}Z_{r,l}^{r-1}{\simeq} Z_{d,r}^{r-1}\cap Z_{d,l}^{r-1}$, it suffices to show that
\begin{equation}\label{IdealEquality2}
I_{Z_{d,l}^{r-1}}\cdot \mathcal{O}_{Q_{d,r}^{r-1}}=I_{Z_{r,l}^{r-1}}\cdot \mathcal{O}_{Q_{d,r}^{r-1}}
\end{equation}

We now prove it by induction on $l$. When $l=0$, we have
\[
\begin{split}
\ I_{Z_{d,0}^{r-1}}\cdot \mathcal{O}_{Q_{d,r}^{r-1}} %=I_{d,0}\cdot \mathcal{O}_{Q_d^r}\cdot \mathcal{O}_{Q_{d,r+1}^r}
& =(I_{Z_{d,0}}\cdot \mathcal{O}_{Q_{d,r}})\cdot \mathcal{O}_{Q_{d,r}^{r-1}}
\stackrel{(*)}{=}(I_{Z_{r,0}}\cdot \mathcal{O}_{Q_{d,r}})\cdot \mathcal{O}_{Q_{d,r}^{r-1}} \\
& =(I_{Z_{r+1,0}}\cdot \mathcal{O}_{Q_{r}^{r-1}})\cdot \mathcal{O}_{Q_{d,r}^{r-1}}=I_{Z_{r,0}^{r-1}}\cdot \mathcal{O}_{Q_{d,r}^{r-1}}
\end{split}
\]
where $(*)$ is by Proposition \ref{IdealEquality}.

Assume (\ref{IdealEquality2}) holds for all $l\leq N-1$ for some $N\leq r-1$, and we prove it also holds for $l=N$.
By Corollary \ref{IdealFactor}, we have
\[
\begin{split}
I_{Z_{d,N}}\cdot \mathcal{O}_{Q_d^{N-1}}& =I_{Z_{d,N}^{N-1}}\cdot(I_{Z_{d,N-1}^{N-1}})^2\cdot
\dots\cdot(I_{Z_{d,0}^{N-1}})^{N+1} \\
I_{Z_{r,N}}\cdot \mathcal{O}_{Q_{r}^{N-1}}&=I_{Z_{r,N}^{N-1}}\cdot(I_{Z_{r,N-1}^{N-1}})^2\cdot
\dots\cdot(I_{Z_{r,0}^{N-1}})^{N+1} \\
\end{split}
\]
Hence we have
\begin{equation}\label{e6}
\begin{split}
&\quad\ I_{Z_{d,N}^{r-1}}\cdot(I_{Z_{d,N-1}^{r-1}})^2\cdot \dots\cdot(I_{Z_{d,0}^{r-1}})^{N+1}\cdot \mathcal{O}_{Q_{d,r}^{r-1}} \\
&= \big(I_{Z_{d,N}^{N-1}}\cdot(I_{Z_{d,N-1}^{N-1}})^2\cdot \dots\cdot(I_{Z_{d,0}^{N-1}})^{N+1}\cdot
\mathcal{O}_{Q_d^{r-1}}\big)\cdot\mathcal{O}_{Q_{d,r}^{r-1}} \\
&= ((I_{Z_{d,N}}\cdot \mathcal{O}_{Q_{d}^{N-1}})\cdot \mathcal{O}_{Q_d^{r-1}}) \cdot \mathcal{O}_{Q_{d,r}^{r-1}} = (I_{Z_{d,N}}\cdot \mathcal{O}_{Q_d^{r-1}}) \cdot \mathcal{O}_{Q_{d,r}^{r-1}} \\
&= (I_{Z_{d,N}}\cdot \mathcal{O}_{Q_{d,r}})\cdot \mathcal{O}_{Q_{d,r}^{r-1}} = (I_{Z_{r,N}} \cdot \mathcal{O}_{Q_{d,r}})\cdot \mathcal{O}_{Q_{d,r}^{r-1}} \\
&= ((I_{r,N} \cdot \mathcal{O}_{Q_{r}^{N-1}})\cdot \mathcal{O}_{Q_{r}^{r-1}}) \cdot \mathcal{O}_{Q_{d,r}^{r-1}} \\
&= (I_{Z_{r,N}^{N-1}}\cdot(I_{Z_{r,N-1}^{N-1}})^2\cdot \dots\cdot(I_{Z_{r,0}^{N-1}})^{N+1} \cdot \mathcal{O}_{Q_{r}^{r-1}}) \cdot
\mathcal{O}_{Q_{d,r}^{r-1}} \\
&= I_{Z_{r,N}^{r-1}}\cdot(I_{Z_{r,N-1}^{r-1}})^2\cdot \dots\cdot(I_{Z_{r,0}^{r-1}})^{N+1} \cdot \mathcal{O}_{Q_{d,r}^{r-1}}
\end{split}
\end{equation}
By induction hypothesis, we have an equality of invertible ideal sheaves
\[
(I_{Z_{d,N-1}^{r-1}})^2\cdot \dots\cdot(I_{Z_{d,0}^{r-1}})^{N+1}\cdot \mathcal{O}_{Q_{d,r}^{r-1}}=
(I_{Z_{r,N-1}^{r-1}})^2\cdot \dots\cdot(I_{Z_{r,0}^{r-1}})^{N+1} \cdot \mathcal{O}_{Q_{d,r}^{r-1}}
\]
We can eliminate this invertible ideal sheaf from both sides of the equality (\ref{e6}) and
obtain
\[
 I_{Z_{d,N}^{r-1}}\cdot \mathcal{O}_{Q_{d,r}^{r-1}}=I_{Z_{r,N}^{r-1}}\cdot\mathcal{O}_{Q_{d,r}^{r-1}}.
\]
Thus the statement is true for $N$.
\end{proof}

An easy corollary is as follows:
\begin{cor}\label{IntersectionIsomorphism}
For any $N$ distinct integers $l_1,\dots,l_N\in\{0,\cdots,r-1\}$, $\beta$ maps the
scheme-theoretic intersection $\ds Z_{d,r}^{r-1}\cap\bigcap_{j=1}^N Z_{d,l_j}^{r-1}$ isomorphically onto
$Q_{d,r}\times_{Q_{r}} \ds\bigcap_{j=1}^N Z_{r,l_j}^{r-1}.$
\end{cor}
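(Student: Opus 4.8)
The plan is to deduce the corollary from the ideal-sheaf identity~(\ref{IdealEquality2}) — which the proof of Proposition~\ref{isobybeta} establishes for every individual index $l\le r-1$ — together with the elementary fact that both ``scheme-theoretic intersection'' and ``base change'' turn a finite family of ideal sheaves into their sum. Write $Q_{d,r}^{r-1}:=Q_{d,r}\times_{Q_r}Q_r^{r-1}$ and let $\Phi\colon Q_{d,r}^{r-1}\to Q_d^{r-1}$ and $p\colon Q_{d,r}^{r-1}\to Q_r^{r-1}$ be the two natural morphisms appearing in the commutative diagram in the proof of Proposition~\ref{isobybeta}; note that $\Phi$ is $\beta$ followed by the inclusion $Z_{d,r}^{r-1}\hookrightarrow Q_d^{r-1}$.

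First I would compute the ideal sheaf of the source. The scheme-theoretic intersection $\bigcap_{j=1}^N Z_{d,l_j}^{r-1}$ in $Q_d^{r-1}$ is cut out by $\sum_{j=1}^N I_{Z_{d,l_j}^{r-1}}$, so, intersecting further with the closed subscheme $Z_{d,r}^{r-1}$ and transporting through the isomorphism $\beta$, the subscheme $Z_{d,r}^{r-1}\cap\bigcap_{j=1}^N Z_{d,l_j}^{r-1}$ corresponds to the closed subscheme of $Q_{d,r}^{r-1}$ with ideal sheaf $\Phi^{-1}\big(\sum_{j} I_{Z_{d,l_j}^{r-1}}\big)\cdot\mathcal{O}_{Q_{d,r}^{r-1}}=\sum_{j=1}^N\big(I_{Z_{d,l_j}^{r-1}}\cdot\mathcal{O}_{Q_{d,r}^{r-1}}\big)$, using that extension of ideal sheaves commutes with finite sums. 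Symmetrically, $\bigcap_{j=1}^N Z_{r,l_j}^{r-1}$ in $Q_r^{r-1}$ has ideal sheaf $\sum_{j} I_{Z_{r,l_j}^{r-1}}$, hence the fibre product $Q_{d,r}\times_{Q_r}\bigcap_{j} Z_{r,l_j}^{r-1}=Q_{d,r}^{r-1}\times_{Q_r^{r-1}}\bigcap_{j} Z_{r,l_j}^{r-1}$ is the closed subscheme of $Q_{d,r}^{r-1}$ with ideal sheaf $p^{-1}\big(\sum_{j} I_{Z_{r,l_j}^{r-1}}\big)\cdot\mathcal{O}_{Q_{d,r}^{r-1}}=\sum_{j=1}^N\big(I_{Z_{r,l_j}^{r-1}}\cdot\mathcal{O}_{Q_{d,r}^{r-1}}\big)$.

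Finally I would invoke~(\ref{IdealEquality2}): for each $j$ one has $I_{Z_{d,l_j}^{r-1}}\cdot\mathcal{O}_{Q_{d,r}^{r-1}}=I_{Z_{r,l_j}^{r-1}}\cdot\mathcal{O}_{Q_{d,r}^{r-1}}$, and summing these $N$ equalities shows that the two ideal sheaves just computed agree. Therefore $\beta$ carries $Z_{d,r}^{r-1}\cap\bigcap_{j=1}^N Z_{d,l_j}^{r-1}$ isomorphically onto $Q_{d,r}\times_{Q_r}\bigcap_{j=1}^N Z_{r,l_j}^{r-1}$, which is the assertion. The only step needing care — and the only thing I would call an obstacle, and a minor one — is the bookkeeping verifying that forming ``the sum of ideal sheaves'' really is compatible with restriction to $Z_{d,r}^{r-1}$ and with the base changes along $\Phi$ and $p$; granting that, the statement is immediate, and no positivity assumption on $m$ beyond those already used in Proposition~\ref{isobybeta} is required.
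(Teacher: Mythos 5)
Your proposal is correct and matches the paper's intent: the paper states this as ``an easy corollary'' of Proposition~\ref{isobybeta}, whose proof reduces everything to the single-index ideal identity~(\ref{IdealEquality2}), and the intended completion is exactly your observation that scheme-theoretic intersections correspond to sums of ideal sheaves and that extension of ideal sheaves along $\Phi$ and $p$ commutes with finite sums. No gap.
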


\begin{thm}\label{SmoothDivisor}
For $0\leq l\leq d-1$ and $l\leq r\leq d-1$, $Z_{d,l}^r$ is nonsingular, irreducible, and of codimension one in $Q_d^r$.
\end{thm}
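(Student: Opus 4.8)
The plan is to prove the statement by a double induction: an outer induction on $d$ and, for fixed $d$, an inner induction on $r$ (the statement being vacuous for $d=0$). At the $r$-th stage of the inner induction one must treat $Z_{d,l}^r$ for every $l\le r$: the case $l=r$ is a base case, handled by recognizing $Z_{d,l}^l$ as an exceptional divisor, and the case $l<r$ is the inductive step, handled by recognizing $Z_{d,l}^r$ as a blowup of $Z_{d,l}^{r-1}$.

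\emph{The case $l=r$.} Here $Z_{d,l}^l$ is the exceptional divisor of $Q_d^l=\Bl_{Z_{d,l}^{l-1}}Q_d^{l-1}$. By Theorem \ref{MainTheorem}, $Q_d^{l-1}$ and $Z_{d,l}^{l-1}$ are nonsingular (for $l=0$ one uses $Q_d^{-1}=Q_d$), and $Z_{d,l}^{l-1}$ is irreducible: it is isomorphic to $Q_{d,l}\times_{Q_l}Q_l^{l-1}$, and the latter is irreducible because $Q_l^{l-1}$ is (a blowup of the irreducible $Q_l$) and the structure morphism $\theta\colon Q_{d,l}\to Q_l$ is smooth with irreducible fibres. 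Consequently the exceptional divisor $Z_{d,l}^l$ is the projectivization of the normal bundle of the nonsingular irreducible $Z_{d,l}^{l-1}$ inside the nonsingular $Q_d^{l-1}$, hence is itself irreducible, nonsingular, and of codimension one. (If $Z_{d,l}^{l-1}$ already happens to be a divisor the blowup is an isomorphism and there is nothing to prove.)

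\emph{The case $l<r$.} By the inner induction hypothesis $Z_{d,l}^{r-1}$ is an irreducible nonsingular divisor in $Q_d^{r-1}$, and by Theorem \ref{MainTheorem} both $Q_d^{r-1}$ and the blowup center $Z_{d,r}^{r-1}$ are nonsingular. Let $W:=Z_{d,l}^{r-1}\cap Z_{d,r}^{r-1}$ be the scheme-theoretic intersection. By Corollary \ref{IntersectionIsomorphism} (case $N=1$), $\beta$ restricts to an isomorphism $W\cong Q_{d,r}\times_{Q_r}Z_{r,l}^{r-1}$. Since $r<d$, the outer induction hypothesis applies to the pair $(r,l)$ and shows that $Z_{r,l}^{r-1}$ is a nonsingular divisor in $Q_r^{r-1}$; in particular it is nonsingular and $\dim Z_{r,l}^{r-1}=\dim Q_r-1$. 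Because $\theta$ is smooth of relative dimension $k(d-r)$, the projection $W\to Z_{r,l}^{r-1}$ is smooth of relative dimension $k(d-r)$, so $W$ is nonsingular and
\[
\dim W=\dim Z_{r,l}^{r-1}+k(d-r)=nd+k(n-k)-1-(n-k)(d-r)<\dim Z_{d,l}^{r-1}.
\]
Hence $W$ is a nowhere dense nonsingular subscheme of $Z_{d,l}^{r-1}$; in particular $Z_{d,l}^{r-1}$ is not contained in the center $Z_{d,r}^{r-1}$. Since $Z_{d,l}^{r-1}$ is integral and not contained in the center, the strict transform $Z_{d,l}^r$ is canonically isomorphic to $\Bl_W Z_{d,l}^{r-1}$, the blowup of $Z_{d,l}^{r-1}$ along $W$. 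Being the blowup of a nonsingular variety along a nonsingular nowhere dense center, $Z_{d,l}^r$ is nonsingular; being a blowup of the irreducible $Z_{d,l}^{r-1}$ it is irreducible; and since blowups preserve dimensions, $\dim Z_{d,l}^r=\dim Z_{d,l}^{r-1}=\dim Q_d^{r-1}-1=\dim Q_d^r-1$, so it has codimension one in $Q_d^r$. This closes the induction.

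\emph{Main point.} The essential inputs are Theorem \ref{MainTheorem}, Corollary \ref{IntersectionIsomorphism}, the smoothness of $\theta$, and the classical fact that the strict transform of an integral subscheme not contained in the blowup center is itself a blowup. The one place demanding care is the interlocking of the two inductions: the conclusion about $Z_{d,l}^r$ for a given $d$ relies on the same conclusion for $Z_{r,l}^{r-1}$ with $r<d$, so the outer induction on $d$ must be genuinely invoked, not merely the inner one on $r$. One must also check that the scheme-theoretic intersection to which Corollary \ref{IntersectionIsomorphism} applies is exactly the center of the blowup realizing the strict transform, which is why the non-containment $Z_{d,l}^{r-1}\not\subseteq Z_{d,r}^{r-1}$ (established here via the dimension computation) is needed.
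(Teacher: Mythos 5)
Your proof is correct and follows essentially the same route as the paper: the case $l=r$ is handled as the exceptional divisor over the nonsingular irreducible center $Z_{d,l}^{l-1}$, and for $l<r$ the proper transform is realized as the blowup of $Z_{d,l}^{r-1}$ along its nonsingular scheme-theoretic intersection with the center $Z_{d,r}^{r-1}$. You are in fact more explicit than the paper, which leaves implicit both the outer induction on $d$ (needed, via Corollary \ref{IntersectionIsomorphism} and the smoothness of $\theta$, to see that the intersection with the center is nonsingular) and the non-containment of $Z_{d,l}^{r-1}$ in that center.
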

\begin{proof}
We fix $l$ and prove that $Z_{d,l}^r$ is nonsingular, irreducible, and of codimension one for all $r\geq l$. By definition, $Z_{d,l}^l$ is the exceptional divisor in the blowup $Q_d^l\to Q_d^{l-1}$ with the nonsingular irreducible blowup center $Z_{d,l}^{l-1}$. Hence $Z_{d,l}^l$ is nonsingular, irreducible, and of codimension one. $Z_{d,l}^{l+1}$, the proper transform of $Z_{d,l}^l$ in $Q_d^{l+1}$, can be considered as the blowup of $Z_{d,l}^l$ along its nonsingular subvariety $Z_{d,l+1}^l\cap Z_{d,l}^l$, hence it is nonsingular and of codimension one. Repeat this argument, we conclude that $Z_{d,l}^{l},\cdots,Z_{d,l}^{d-1}$ are all nonsingular and of codimension one.
\end{proof}

It remains to prove the transversality.

\begin{thm}\label{transversal}
For $0\leq r\leq d-1$, $Z_{d,0}^r,\cdots, Z_{d,r}^r$ intersect transversally in $Q_d^r$.
\end{thm}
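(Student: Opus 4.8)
The plan is to argue by induction on $r$, simultaneously over all $d\geq r+1$. The base case $r=0$ is vacuous: $Z_{d,0}^{0}$ is a single nonsingular divisor in $Q_d^0$ by Theorem \ref{SmoothDivisor}. For the inductive step I would fix $d\geq r+1$, assume the theorem for the index $r-1$ and all $d'\geq r$, and analyze the blowup $Q_d^{r}=\Bl_{Z_{d,r}^{r-1}}Q_d^{r-1}$. The principle I intend to use is the standard fact that if $C$ is a nonsingular closed subvariety of a nonsingular variety $Y$ which has normal crossings with a simple normal crossings divisor $D=\bigcup_i D_i$ on $Y$ --- meaning that near every point of $Y$ there are local coordinates in which $C$ and each $D_i$ are coordinate subspaces --- then on $\Bl_C Y$ the exceptional divisor together with the proper transforms of the $D_i$ again form a simple normal crossings divisor. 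Applying this with $Y=Q_d^{r-1}$ (nonsingular by Theorem \ref{MainTheorem}), $D=\bigcup_{l=0}^{r-1}Z_{d,l}^{r-1}$ (a simple normal crossings divisor by the inductive hypothesis, with components nonsingular of codimension one by Theorem \ref{SmoothDivisor}), and $C=Z_{d,r}^{r-1}$ (nonsingular by Theorem \ref{MainTheorem}), the desired conclusion for $Q_d^{r}$ would follow at once.

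Thus the real content is to verify that the blowup center $Z_{d,r}^{r-1}$ has normal crossings with $\bigcup_{l<r}Z_{d,l}^{r-1}$. For a collection of nonsingular divisors this reduces to showing that the center is transversal to every stratum, i.e.\ for each $J\subseteq\{0,\dots,r-1\}$ the scheme-theoretic intersection $Z_{d,r}^{r-1}\cap\bigcap_{l\in J}Z_{d,l}^{r-1}$ is nonsingular of codimension $|J|$ in $Z_{d,r}^{r-1}$ (or empty): a nonsingular intersection of the expected codimension forces the tangent-space transversality, and at a given point only the deepest stratum through it matters. Here I would invoke the structural results already proved. By Corollary \ref{IntersectionIsomorphism} the isomorphism $\beta$ identifies $Z_{d,r}^{r-1}\cap\bigcap_{l\in J}Z_{d,l}^{r-1}$ with $Q_{d,r}\times_{Q_r}\bigcap_{l\in J}Z_{r,l}^{r-1}$, compatibly with $Z_{d,r}^{r-1}\cong Q_{d,r}\times_{Q_r}Q_r^{r-1}$. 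Since $\theta:Q_{d,r}\to Q_r$ is smooth, the projection $Q_{d,r}\times_{Q_r}Q_r^{r-1}\to Q_r^{r-1}$ is smooth; and the inductive hypothesis in the case $d'=r$ says precisely that $Z_{r,0}^{r-1},\dots,Z_{r,r-1}^{r-1}$ form a simple normal crossings divisor in $Q_r^{r-1}$, so each $\bigcap_{l\in J}Z_{r,l}^{r-1}$ is nonsingular of codimension $|J|$ in $Q_r^{r-1}$ (or empty). Pulling back along the smooth morphism preserves nonsingularity and codimension, so $Q_{d,r}\times_{Q_r}\bigcap_{l\in J}Z_{r,l}^{r-1}$, and hence via $\beta$ the intersection $Z_{d,r}^{r-1}\cap\bigcap_{l\in J}Z_{d,l}^{r-1}$, is nonsingular of codimension $|J|$ in $Z_{d,r}^{r-1}$, which is exactly the transversality needed.

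I expect the main obstacle to be this transversality verification, although with Corollary \ref{IntersectionIsomorphism} and the smoothness of $\theta$ already in hand it is close to formal. The points requiring care are: (i) phrasing ``normal crossings of the center with the divisor'' in the scheme-theoretic form that actually feeds the blowup lemma (nonsingular intersection of expected codimension $\Rightarrow$ tangent-space transversality $\Rightarrow$ simultaneous local coordinates $\Rightarrow$ simple normal crossings after blowup), and checking that the intersections in Corollary \ref{IntersectionIsomorphism} are taken scheme-theoretically; and (ii) keeping the nested induction honest --- the step for $(d,r)$ uses the previous index $r-1$ both for $d'=d$ (to know $\bigcup_{l<r}Z_{d,l}^{r-1}$ is simple normal crossings) and for $d'=r$ (to know $\bigcup_{l<r}Z_{r,l}^{r-1}$ is simple normal crossings). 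No new geometric input beyond the isomorphism $Z_{d,r}^{r-1}\cong Q_{d,r}\times_{Q_r}Q_r^{r-1}$ and its compatibility with the $Z_{d,l}^{r-1}$ should be needed.
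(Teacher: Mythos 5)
Your proposal is correct and follows essentially the same route as the paper: induction on $r$ uniformly in $d$, reduction via Corollary \ref{IntersectionIsomorphism} and the smoothness of $\theta: Q_{d,r}\to Q_r$ to the inductive hypothesis for $Q_r^{r-1}$, and the preservation of transversality under blowup along a center meeting the divisor transversally. The only cosmetic difference is that the paper verifies transversality pointwise by a tangent-space dimension count, whereas you package the same computation as ``nonsingular intersection of the expected codimension.''
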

\begin{proof}
We prove it by induction on $r$ (for all $d>r$). When $r=0$, the statement is trivial. Assume that the statement is true for $r-1$. We first show that $Z_{d,r}^{r-1}, Z_{d,0}^{r-1},\dots,Z_{d,r-1}^{r-1}$ transversally. Since $ Z_{d,0}^{r-1},\dots,Z_{d,r-1}^{r-1}$ intersect transversally by induction hypothesis, we only need to show that $Z_{d,r}^{r-1}$ intersect with $Z_{d,0}^{r-1},\dots,Z_{d,r-1}^{r-1}$ transversally. Let $x\in Z_{d,r}^{r-1}$, and suppose $Z_{d,l_1}^{r-1},\dots,Z_{d,l_N}^{r-1}$ are those from the collection $\{Z_{d,l}^{r-1}\,|\,0\leq l\leq r-1\}$ which
pass through $x$. We show that $Z_{d,r}^{r-1}$ intersect with $Z_{d,l_1}^{r-1},\dots,Z_{d,l_N}^{r-1}$ transversally at $x$ by calculating the dimensions of tangent spaces. Since the tangent space of a scheme-theoretic intersection of some subschemes is equal to the intersection the the tangent spaces of those subschemes, we have
\[
T_x{Z_{d,r}^{r-1}}\cap \bigcap_{j=1}^N
T_x{Z_{d,l_j}^{r-1}}=T_x\big({Z_{d,r}^{r-1}\cap\bigcap_{j=1}^N
Z_{d,l_j}^{r-1}}\big)\simeq T_{\beta^{-1}(x)}\big({Q_{d,r}\times_{Q_r}\bigcap_{j=1}^N Z_{r,l_j}^{r-1}}\big)
\]
where the last equality is by Corollary \ref{IntersectionIsomorphism}. Let $y\in Q_r^{r-1}$ be the image of $\beta^{-1}(x)$ under the projection $Q_{d,r}\times_{Q_r}Q_r^{r-1}\to Q_r^{r-1}$. By induction hypothesis, $Z_{r,l}^{r-1}$, $l=0,\dots,r-1$, intersect transversally in $Q_r^{r-1}$. Thus we have
\[
\codim \big(\bigcap_{j=1}^N T_y{Z_{r,l_j}^{r-1}}\big)=\sum_{j=1}^N\codim T_y{Z_{d,l_j}^{r-1}}=N
\]
or
\[
\dim\big(\bigcap_{j=1}^N T_y{Z_{r,l_j}^{r-1}}\big)=\dim Q_r^{r-1}-N=\dim Q_r-N
\]
Therefore
\[
\begin{split}
&\quad\dim\big(T_x{Z_{d,r}^{r-1}}\cap \bigcap_{j=1}^N T_x{Z_{d,l_j}^{r-1}}\big)=\dim
T_{\beta^{-1}(x)}\big({Q_{d,r}\times_{Q_r}\bigcap_{j=1}^N Z_{r,l_j}^{r-1}}\big)\\
&=(\dim Q_{d,r}-\dim Q_r)+\dim\big(\bigcap_{j=1}^N T_y{Z_{r,l_j}^{r-1}}\big)=(\dim Q_{d,r}-\dim Q_r)+(\dim Q_r-N)\\
&=\dim Q_{d,r}-N=\dim Z_{d,r}^{r-1}-N=\dim T_x Z_{d,r}^{r-1}-N
\end{split}
\]
and hence
\[
\begin{split}
&\quad \codim \big(T_x{Z_{d,r}^{r-1}}\cap \bigcap_{j=1}^N T_x{Z_{d,l_j}^{r-1}}\big)=\dim Q_d^{r-1}-(\dim T_x Z_{d,r}^{r-1}-N)\\
&=\codim T_xZ_{d,r}^{r-1}+N =\codim T_xZ_{d,r}^{r-1}+\sum_{j=1}^N\codim T_x Z_{d,l_j}^{r-1}
\end{split}
\]
It follows that $Z_{d,r}^{r-1}$ intersect with $Z_{d,0}^{r-1},\dots,Z_{d,r-1}^{r-1}$ transversally at $x$. Since $x$ is an arbitrary point, we know $Z_{d,r}^{r-1}, Z_{d,0}^{r-1},\dots,Z_{d,r-1}^{r-1}$ intersect transversally. Since transversality is preserved under {blowups} along a
nonsingular center, we obtain the statement for $r$.
\end{proof}

For $l\leq r-1$, the proper transform $Z_{d,l}^r$ of $Z_{d,l}^{r-1}$, which can be regarded as the
blowup of $Z_{d,l}^{r-1}$ along the nonsingular subvariety $Z_{d,l}^{r-1}\cap Z_{d,r}^{r-1}$, is a
nonsingular and of codimension one in $Q_d^r$. Thus Theorem \ref{Goal} follows easily from the combination of Theorem \ref{MainTheorem}, \ref{SmoothDivisor} and \ref{transversal}.

\section{The definition of the map $\beta$}

This section is a proof of Proposition \ref{beta_exist}. The first step is to construct the map $\beta$ in the diagram (\ref{ZembedDiagram}). For this, we need another relative Quot scheme $Q_{d^\vee,r}$ defined as
\[
Q_{d^\vee,r}:=\Quot^{0,d-r}_{\mathcal{E}_d^\vee/\mathbb{P}^1_{Q_d}/Q_d}
\]
with structure morphism $\vartheta: Q_{d^\vee,r}\to Q_d$. It comes equipped with a universal exact
sequence on $\mathbb{P}^1_{Q_{d^\vee,r}}$:
\[
0\to \mathcal{E}_{d^\vee,r}\to \bar\vartheta^*\mathcal{E}_d^\vee \to \mathcal{T}_{d^\vee,r}\to 0
\]
where $\mathcal{T}_{d^\vee,r}$ is flat over $Q_d$ with rank 0 and relative degree $d-r$.

Recall that the relative Quot scheme $Q_{d,r}$ has a universal exact sequence on
$\mathbb{P}^1_{Q_{d,r}}$:
\[
0\to \mathcal{E}_{d,r}\to \bar\theta^*\mathcal{E}_{r}\to \mathcal{T}_{d,r}\to 0
\]
together with a homomorphism $\varphi: Q_{d,r}\to Q_d$ such that
\[
\mathcal{E}_{d,r}=\bar\varphi^*\mathcal{E}_d.
\]
Dualizing the universal exact sequence of $Q_{d,r}$, we obtain
\[
0\to \bar\theta^*\mathcal{E}_{r}^\vee \to \bar\varphi^*\mathcal{E}_d^\vee \to
\mathcal{T}_{d,r}^\varepsilon\to 0
\]
where $\mathcal{T}_{d,r}^\varepsilon$ is flat over $Q_{d,r}$ of relative degree $d-r$ and rank 0.
By the universal property of $Q_{d^\vee,r}$, we see that the above sequence gives rise to a
$Q_d$-morphism:
\[
\eta: Q_{d,r}\to Q_{d^\vee,r}
\]
such that the following diagram commutes:
\begin{equation}\label{CD-eta}
\CD{
0\ar[r]&\bar\theta^*\mathcal{E}_{r}^\vee\ar[r]\ar@{=}[d]&\bar \varphi^*\mathcal{E}_d^\vee \ar[r]\ar@{=}[d]&\mathcal{T}_{d,r}^\varepsilon\ar[r]\ar[d]^{\simeq} & 0  \\
0\ar[r]&\bar\eta^*\mathcal{E}_{d^\vee,r}\ar[r]   &\bar\eta^*\bar \vartheta^*\mathcal{E}_d^\vee\ar[r] &\bar\eta^*\mathcal{T}_{d^\vee,r}\ar[r] &0\\
}
\end{equation}

\begin{prop}\label{eta}
The morphism $\eta$ is a closed embedding.
\end{prop}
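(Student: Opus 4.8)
The plan is to show that $\eta$ is a proper monomorphism; a proper monomorphism is a closed immersion, so this proves the proposition. Properness is immediate, since $\eta$ is a morphism of $\Bbbk$-schemes with $Q_{d,r}$ projective over $\Bbbk$ and $Q_{d^\vee,r}$ separated (equivalently, $\eta$ is a morphism of projective $Q_d$-schemes). Thus it suffices to prove that $\eta$ is a monomorphism, i.e. that $\Hom(T,Q_{d,r})\to\Hom(T,Q_{d^\vee,r})$ is injective for every noetherian $\Bbbk$-scheme $T$.

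Fix $g_1,g_2\colon T\to Q_{d,r}$ with $\eta g_1=\eta g_2$, and put $t_i:=\theta g_i$. By the universal property of $Q_{d,r}=\Quot^{0,d-r}_{\mathcal{E}_r/\mathbb{P}^1_{Q_r}/Q_r}$, the morphism $g_i$ is the datum of $t_i\colon T\to Q_r$ together with the class of a quotient $\bar t_i^*\mathcal{E}_r\surj\tau_i$, where $\tau_i$ is torsion and flat over $T$ of relative degree $d-r$; its kernel is $\mathcal{E}_i':=\bar g_i^*\mathcal{E}_{d,r}=\overline{\varphi g_i}^*\mathcal{E}_d$, sitting inside $\bar t_i^*\mathcal{E}_r\subset V_{\mathbb{P}^1_T}$ as the pullback by $g_i$ of the column $\mathcal{E}_{d,r}\subset\bar\theta^*\mathcal{E}_r\subset V_{\mathbb{P}^1_{Q_{d,r}}}$ of (\ref{bigDiagram})--(\ref{DiagramQdr}). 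Pulling back the diagram (\ref{CD-eta}) along $g_i$ and using the universal property of $Q_{d^\vee,r}$, the morphism $\eta g_i$ is exactly the pair $\big(\varphi g_i,\,[\,\overline{\varphi g_i}^*\mathcal{E}_d^\vee\surj\tau_i^\e\,]\big)$, the quotient having kernel the subsheaf $\bar t_i^*\mathcal{E}_r^\vee\subset\overline{\varphi g_i}^*\mathcal{E}_d^\vee$ obtained by dualizing $\mathcal{E}_i'\hookrightarrow\bar t_i^*\mathcal{E}_r$. Since $\vartheta\eta=\varphi$, the equality $\eta g_1=\eta g_2$ forces first $\varphi g_1=\varphi g_2=:c$, hence $\mathcal{E}_1'=\mathcal{E}_2'=\bar c^*\mathcal{E}_d$ as subsheaves of $V_{\mathbb{P}^1_T}$, and second the equality of kernels $\bar t_1^*\mathcal{E}_r^\vee=\bar t_2^*\mathcal{E}_r^\vee=:\mathcal{G}\subset\bar c^*\mathcal{E}_d^\vee$.

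It remains to recover $t_i$ from $\mathcal{G}$. Dualizing the sequence $0\to\mathcal{G}\to\bar c^*\mathcal{E}_d^\vee\to\bar c^*\mathcal{E}_d^\vee/\mathcal{G}\to 0$ and using that $\mathcal{E}_d$ is locally free together with $\tau_i^{\e\e}=\tau_i$ (Proposition \ref{FlatTorsion}) produces, independently of $i$, a short exact sequence $0\to\bar c^*\mathcal{E}_d\to\mathcal{G}^\vee\to\tau\to 0$ in which $\mathcal{G}^\vee$ is canonically identified with $\bar t_i^*\mathcal{E}_r$ (double dual of a locally free sheaf), $\tau$ with $\tau_i$, and the subsheaf $\bar c^*\mathcal{E}_d$ with $\mathcal{E}_i'$. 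For each $i$ the pullback of the universal inclusion on $Q_r$ thus gives an inclusion $\iota_i\colon\mathcal{G}^\vee=\bar t_i^*\mathcal{E}_r\hookrightarrow V_{\mathbb{P}^1_T}$ extending the fixed inclusion $\bar c^*\mathcal{E}_d\hookrightarrow V_{\mathbb{P}^1_T}$ along $\mathcal{E}_i'=\bar c^*\mathcal{E}_d\hookrightarrow\mathcal{G}^\vee$. Two such extensions differ by a homomorphism in $\Hom_{\mathbb{P}^1_T}(\tau,V_{\mathbb{P}^1_T})=H^0\!\big(\mathbb{P}^1_T,\HHom(\tau,\mathcal{O}_{\mathbb{P}^1_T})^{\oplus n}\big)$; but $\tau$ is torsion and flat over $T$, so its schematic support is finite over $T$, and hence $\HHom_{\mathbb{P}^1_T}(\tau,\mathcal{O}_{\mathbb{P}^1_T})=0$ — indeed, on each of the two standard $\mathbb{A}^1$-charts $\Spec A[x]$ of $\mathbb{P}^1_T$ over $\Spec A\subset T$, the sheaf $\tau$ is a finite $A$-module and any $A[x]$-linear map into $A[x]$ is killed by comparing the top-degree coefficients of the action of multiplication by $x$. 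Thus $\iota_1=\iota_2$, so $\bar t_1^*\mathcal{E}_r=\bar t_2^*\mathcal{E}_r$ as subsheaves of $V_{\mathbb{P}^1_T}$, and the uniqueness clause of the universal property of $Q_r$ gives $t_1=t_2=:t$. Finally $g_1,g_2$ are $T$-points of $Q_{d,r}$ over the common $t$, and their universal quotients $[\bar t^*\mathcal{E}_r\surj\tau_i]$ have kernels $\mathcal{E}_1'=\mathcal{E}_2'$ (the canonical subsheaf $\bar c^*\mathcal{E}_d\subset\mathcal{G}^\vee=\bar t^*\mathcal{E}_r$); by the universal property of $Q_{d,r}$, $g_1=g_2$. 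Hence $\eta$ is a monomorphism, and the proposition follows.

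The step I expect to be the crux is the vanishing $\HHom_{\mathbb{P}^1_T}(\tau,\mathcal{O}_{\mathbb{P}^1_T})=0$ for a torsion family $\tau$ flat over an arbitrary base $T$, which is exactly what allows one to reconstruct the inclusion $\bar t^*\mathcal{E}_r\hookrightarrow V_{\mathbb{P}^1_T}$ — equivalently the morphism $\theta g\colon T\to Q_r$ — from $\eta g$. Should the general base prove awkward, one can instead check that $\eta$ is unramified and injective on $\Bbbk$-points by running the argument only for $T=\Spec\Bbbk$ and $T=\Spec\Bbbk[\e]$, where the needed vanishing is the elementary $\HHom_{\mathbb{P}^1}(\text{torsion},\mathcal{O}_{\mathbb{P}^1})=0$; over the algebraically closed field $\Bbbk$, a proper unramified morphism that is injective on closed points is again a closed immersion.
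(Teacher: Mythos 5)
Your argument is correct in substance but takes a genuinely different route from the paper's. The paper never invokes the criterion ``proper $+$ monomorphism $=$ closed immersion''; instead it explicitly identifies the scheme-theoretic image of $\eta$: it forms the cokernel $\mathcal{T}$ of $V^\vee_{\mathbb{P}^1_{Q_{d^\vee,r}}}\to\mathcal{T}_{d^\vee,r}$, takes $Z\subset Q_{d^\vee,r}$ to be the (closed) stratum of the flattening stratification on which $\mathcal{T}$ has relative degree $d-r$, shows $\eta$ factors through $Z$, and then builds an explicit inverse $\zeta\colon Z\to Q_{d,r}$ with $\zeta\eta'=\id_{Q_{d,r}}$ and $\eta'\zeta=\id_Z$. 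Your functor-of-points proof is shorter and does not require naming the image, at the cost of not describing it. The mathematical core is nevertheless shared: your reconstruction of $t_i\colon T\to Q_r$ from $\eta g_i$ --- dualizing $0\to\mathcal{G}\to\bar c^*\mathcal{E}_d^\vee\to\tau_i^\e\to 0$ and using $\tau_i^{\e\e}=\tau_i$ from Proposition \ref{FlatTorsion} --- is exactly the dualize-twice mechanism the paper uses to define $\zeta$ (via the morphisms $\xi$ and $\zeta$ in its proof).

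One small repair is needed in your key vanishing $\HHom_{\mathbb{P}^1_T}(\tau,\mathcal{O}_{\mathbb{P}^1_T})=0$. You assert that $\tau$ restricted to each standard chart $\Spec A[x]$ is a finite $A$-module; this can fail when $\Supp\tau$ meets the point at infinity of that chart (for instance, for $\tau=\mathcal{O}_W$ with $W$ the image of the section $s\mapsto[s:1]$ of $\mathbb{P}^1_{\mathbb{A}^1}$, the sections over the opposite chart form $\Bbbk[s,s^{-1}]$, which is not finite over $\Bbbk[s]$). The statement itself is true: since $\Supp\tau$ is finite over $T$, one may shrink $T$ and change coordinates so that $\Supp\tau$ lies in a single chart $\Spec A[x]$; there $\Gamma(\tau)=\Gamma(\pi_*\tau)$ is a finite $A$-module, so by Cayley--Hamilton it is killed by a monic polynomial in $x$, which is a nonzerodivisor on $A[x]$, and the vanishing follows. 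Alternatively your proposed fallback --- checking the monomorphism condition only for $T=\Spec\Bbbk$ and $T=\Spec\Bbbk[\e]$ and concluding via ``finite, unramified, injective on closed points over an algebraically closed field'' --- sidesteps the issue entirely. With either patch the proof is complete.
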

\begin{proof}
We will show that $\eta$ induces an isomorphism from $Q_{d,r}$ to a closed subscheme of
$Q_{d^\vee,r}$. The map $\eta$ fits into the following commutative diagram:
\[
\CD{ %
Q_{d,r} \ar[d]^{\theta}\ar[r]^{\eta}\ar[dr]|-{\phi} & Q_{d^\vee,r} \ar[d]^{\vartheta} \\
Q_r & Q_d
} %
\]
We have the following exact sequence on $\mathbb{P}^1_{Q_d}$:
\[
V_{\mathbb{P}^1_{Q_d}}^\vee \to \mathcal{E}_d^\vee \to \mathcal{F}_d^\varepsilon\to 0
\]
Pull back the sequence by $\bar\vartheta$ to $\mathbb{P}^1_{Q_{d^\vee,r}}$:
\[
V_{\mathbb{P}^1_{Q_{d^\vee,r}}}^\vee \to \bar\vartheta^*\mathcal{E}_d^\vee \to \bar\vartheta^*\mathcal{F}_d^\varepsilon\to 0
\]
and let
$\mathcal{T}$ be the cokernel of the composite map
\[
V_{\mathbb{P}^1_{Q_{d^\vee,r}}}^\vee \to \bar\vartheta^*\mathcal{E}_d^\vee\to
\mathcal{T}_{d^\vee,r}
\]
As a quotient of the torsion sheaf $\mathcal{T}_{d^\vee,r}$, $\mathcal{T}$ is also torsion. We
have a commutative diagram:
\begin{equation} \label{diagram}
\CD{
 & 0 \ar[d] \\
 & \mathcal{E}_{d^\vee,r}\ar[d] \\
V_{\mathbb{P}^1_{Q_{d^\vee,r}}}^\vee \ar[r]\ar@{=}[d] & \bar\vartheta^*\mathcal{E}_d^\vee
\ar[r]\ar[d] & \bar\vartheta^*\mathcal{F}_d^\varepsilon \ar[r]\ar@{..>}[d] & 0 \\
V_{\mathbb{P}^1_{Q_{d^\vee,r}}}^\vee \ar[r] & \mathcal{T}_{d^\vee,r}\ar[r]\ar[d] &
\mathcal{T}\ar[r]\ar[d]
& 0 \\
 & 0 & 0 \\
}
\end{equation}
where the two rows and the second column are exact. The exactness of the third column is induced
from the the exactness of the second column.

For each point $q\in Q_{d^\vee,r}$, the restriction of the last row to $\mathbb{P}^1_q$ is an exact sequence
\[
V_{\mathbb{P}^1_q}^\vee \to (\mathcal{T}_{d^\vee,r})_q\to \mathcal{T}_q \to 0
\]
since pullback is a right exact functor.
Hence the fiber $\mathcal{T}_q$ on $\mathbb{P}^1_q$ is a torsion sheaf of degree at most $d-r$.
Consider the flattening stratification of $Q_{d^\vee,r}$ by $\mathcal{T}$, and let $Z$ be the stratum over which $\mathcal{T}$ has relative degree $d-r$. Then $Z$ is a closed subscheme of $Q_{d^\vee,r}$, and it satisfies the universal property:

For any morphism $f: X\to Q_{d^\vee,r}$, the pull-back $\bar f^*\mathcal{T}$ is flat over $X$ with
relative degree $d-r$ if and only if $f$ factors through the inclusion $i: Z\hookrightarrow
Q_{d^\vee,r}$.

We now show that $\eta: Q_{d,r}\to Q_{d^\vee,r}$ factors through $Z$. The pull-back of the above
commutative diagram under $\bar\eta$ fits into the following commutative diagram
\[
\CD{ %
V^\vee_{\mathbb{P}^1_{Q_{d,r}}} \ar[r]\ar@{=}[d] & \bar\theta^*\mathcal{E}_r^\vee \ar[d] \\
V^\vee_{\mathbb{P}^1_{Q_{d,r}}} \ar[r]\ar@{=}[d] & \bar\phi^*\mathcal{E}_d^\vee \ar[d] \\
V^\vee_{\mathbb{P}^1_{Q_{d,r}}} \ar[r] & \bar\eta^*\mathcal{T}_{d^\vee,r} \ar[r] & \bar\eta^*
\mathcal{T} \ar[r] & 0
} %
\]
where the second column and the third row are exact sequences. Therefore we see that
$V^\vee_{\mathbb{P}^1_{Q_{d,r}}} \to \bar\eta^*\mathcal{T}_{d^\vee,r}$ from the third row is a
zero map. So $\bar\eta^*\mathcal{T}_{d^\vee,r}=\bar\eta^* \mathcal{T}$, and hence $\bar\eta^*
\mathcal{T}$ is flat over $Q_{d,r}$ with relative degree $d-r$. By the universal property of $Z$,
the map $\eta$ factors through the inclusion $i: Z\hookrightarrow Q_{d^\vee,r}$. We denote
the map factored out from $\eta$ as
\[
\eta': Q_{d,r}\to Z.
\]
Thus, $\eta=i\eta'$.

Now pullback the last row of (\ref{diagram}) to $\mathbb{P}^1_Z$ to get an exact sequence
\[
V_{\mathbb{P}^1_Z}^\vee \to (\mathcal{T}_{d^\vee,r})_Z\to \mathcal{T}_Z \to 0
\]
By the definition of $Z$, $\mathcal{T}_Z$ is flat over $Z$ with relative degree $d-r$. Let
$\mathcal{K}$ be the kernel of the homomorphism $(\mathcal{T}_{d^\vee,r})_Z\to \mathcal{T}_Z$. Then
for every point $q\in Z$, we have a short exact sequence on $\mathbb{P}^1_q$:
\[
0\to \mathcal{K}_q\to (\mathcal{T}_{d^\vee,r})_q\to \mathcal{T}_q \to 0
\]
We see that $\mathcal{K}_q$ is a torsion sheaf of degree 0 on $\mathbb{P}^1_q$, hence
$\mathcal{K}_q=0$. It follows that $\mathcal{K}=0$, and therefore
\[
\Img(V_{\mathbb{P}^1_Z}^\vee \to (\mathcal{T}_{d^\vee,r})_Z)=\mathcal{K}=0,\quad
(\mathcal{T}_{d^\vee,r})_Z=\mathcal{T}_Z
\]
So we have the following commutative diagram on $\mathbb{P}^1_Z$:
\[
\CD{ %
 & 0\ar[d] & 0\ar[d] \\
V_{\mathbb{P}^1_Z}^\vee \ar@{..>}[r]\ar@{=}[d] & (\mathcal{E}_{d^\vee,r})_{Z}\ar@{..>}[r]\ar[d] & \mathcal{T}' \ar[r]\ar[d] & 0 \\
V_{\mathbb{P}^1_Z}^\vee \ar[r]\ar[d] & (\bar\vartheta^*\mathcal{E}_d^\vee)_{Z}\ar[r]\ar[d] &
(\bar\vartheta^*\mathcal{F}_d^\varepsilon)_{Z}\ar[r]\ar[d] & 0 \\
0 \ar[r] & (\mathcal{T}_{d^\vee,r})_{Z} \ar@{=}[r]\ar[d] & \mathcal{T}_{Z}\ar[r]\ar[d] & 0 \\
& 0 & 0
} %
\]
with all columns and the last two rows being exact, where
$\mathcal{T}':=\ker((\bar\vartheta^*\mathcal{F}_d^\varepsilon)_{Z} \to \mathcal{T}_{Z})$. Then
$\mathcal{T}'$ is torsion since $(\bar\vartheta^*\mathcal{F}_d^\varepsilon)_{Z}$ is so. The dotted
arrows in the first row are the induced maps. The first row is forced to be exact as well.

Taking dual of the first row, we get an exact sequence
\[
0\to (\mathcal{E}_{d^\vee,r})_{Z}^\vee \to V_{\mathbb{P}^1_Z}\to \mathcal{F}\to 0
\]
where $\mathcal{F}:=\Coker((\mathcal{E}_{d^\vee,r})_{Z}^\vee \to V_{\mathbb{P}^1_Z})$. We now
show that $\mathcal{F}$ is flat over $Z$ with rank $n-k$ and relative degree $r$. We only need to show that $(\mathcal{E}_{d^\vee,r})_q^\vee \to
V_{\mathbb{P}^1_q}$ is injective for every point $q\in Z$. We restrict the first row and last
column of the previous diagram to the fiber $\mathbb{P}^1_q$:
\[
\CD{ %
 & & 0\ar[d] \\
V_{\mathbb{P}^1_q}^\vee \ar[r] & (\mathcal{E}_{d^\vee,r})_q\ar[r] & \mathcal{T}'_q \ar[r]\ar[d] & 0 \\
 & & (\bar\vartheta^*\mathcal{F}_d^\varepsilon)_q\ar[d] \\
& & \mathcal{T}_q\ar[d]  \\
&  & 0
} %
\]
Since pull-back is right exact, the row is exact and the column is right exact. Note that
$\mathcal{T}_Z$ is flat over $Z$, so the column is exact, and hence, $\mathcal{T}'_q$ is torsion.
Taking dual of the row, we get an exact sequence
\[
0\to (\mathcal{E}_{d^\vee,r})_q^\vee \to V_{\mathbb{P}^1_q}
\]
So $\mathcal{F}$ is flat over $Z$ (see \cite{Huy}, Lemma 2.1.4), and we have an exact sequence
\[
0\to (\mathcal{E}_{d^\vee,r})_q^\vee \to V_{\mathbb{P}^1_q} \to \mathcal{F}_q\to 0
\]
Since $(\mathcal{E}_{d^\vee,r})_q^\vee$ has rank $k$ and degree $-r$, we have $\mathcal{F}_q$ has
rank $n-k$ and degree $r$.

By the universal property of $Q_r$, the above exact sequence induces a morphism
\[\xi: Z\to Q_r\] such
that the following diagram commutes:
\[
\CD{ %
0\ar[r]& (\mathcal{E}_{d^\vee,r})_{Z}^\vee \ar[r]\ar@{=}[d]& V_{\mathbb{P}^1_Z} \ar[r]\ar@{=}[d] & \mathcal{F} \ar[r]\ar[d]^{\simeq} & 0 \\
0\ar[r]& \bar\xi^*\mathcal{E}_r \ar[r] & \bar\xi^*V_{\mathbb{P}^1_{Q_r}} \ar[r] &
\bar\xi^*\mathcal{F}_r\ar[r] & 0
} %
\]
Restricting the universal exact sequence of $Q_{d^\vee,r}$ to $\mathbb{P}^1_Z$ and dualizing
it, we obtain an exact sequence:
\[
\CD{ %
0 \ar[r] & (\bar\vartheta^*\mathcal{E}_d)_{Z} \ar[r] &
(\mathcal{E}_{d^\vee,r})_{Z}^\vee \ar[r]\ar@{=}[d] & (\mathcal{T}_{d^\vee,r})_{Z}^\varepsilon \ar[r] & 0 \\
& & \bar\xi^*\mathcal{E}_r
} %
\]
Note that $(\mathcal{T}_{d^\vee,r})_{Z}^\varepsilon$ is flat over $Z$ with rank 0 and relative
degree $d-r$. Thus the above exact sequence induces a morphism $\zeta: Z\to Q_{d,r}$ of
$Q_r$-schemes such that
\[
\CD{ %
0 \ar[r] & (\bar\vartheta^*\mathcal{E}_d)_{Z} \ar[r]\ar@{=}[d] &
(\mathcal{E}_{d^\vee,r})_{Z}^\vee \ar[r]\ar@{=}[d] & (\mathcal{T}_{d^\vee,r})_{Z}^\varepsilon \ar[r]\ar[d]^{\simeq} & 0 \\
0 \ar[r] & \bar\zeta^*\mathcal{E}_{d,r}\ar[r] & \bar\zeta^*\bar\theta^*\mathcal{E}_r
\ar[r]\ar@{=}[d] & \bar\zeta^*\mathcal{T}_{d,r}\ar[r] & 0 \\
& & \bar\xi^*\mathcal{E}_r
} %
\]
We now show that (i) $\zeta\eta'=\id_{Q_{d,r}}$ and (ii) $\eta'\zeta=\id_Z$.

(i) We have the following commutative diagram of equivalent quotients:
\[
\CD{ %
 {\overline{\zeta\eta'}\,}^*\bar\theta^*\mathcal{E}_r \ar@{->>}[d]\ar@{=}[r] &
 \bar\eta'^*\bar \zeta^*\bar\theta^*\mathcal{E}_r \ar@{->>}[d]\ar@{=}[r] &
 \bar\eta'^*\bar \xi^*\mathcal{E}_r \ar@{->>}[d]\ar@{=}[r] &
 \bar\theta^*\mathcal{E}_r \ar@{->>}[d]\ar@{=}[r] &
 \bar\theta^*\mathcal{E}_r \ar@{->>}[d]\ar@{=}[r] &
 \bar\theta^*\mathcal{E}_r \ar@{->>}[d] \\
 {\overline{\zeta\eta'}\,}^*\mathcal{T}_{d,r}\ar@{=}[r] &
 \bar\eta'^*\bar\zeta^*\mathcal{T}_{d,r}\ar[r]^{\simeq} &
 \bar\eta'^*(\mathcal{T}_{d^\vee,r})_{Z}^\e\ar@{=}[r] &
 \bar\eta^*\mathcal{T}_{d^\vee,r}^\e \ar[r]^{\simeq} &
 \mathcal{T}_{d,r}^{\e\e}\ar@{=}[r] &
 \mathcal{T}_{d,r}
} %
\]
By the universal property of $Q_{d,r}$, $\zeta\eta'={\rm id}_{Q_{d,r}}$.

(ii) To show that $\eta'\zeta=\id_Z$, it suffices to show that $i\eta'\zeta=i$, or $\eta\zeta=i$.
We have the following commutative diagram of equivalent quotients:
\[
\CD{ %
{\overline{\eta\zeta}\,}^*\bar\vartheta^*\mathcal{E}_d^\vee \ar@{=}[r]\ar@{->>}[d] &
\bar\zeta^*\bar\eta^*\bar\vartheta^*\mathcal{E}_d^\vee \ar@{=}[r]\ar@{->>}[d] &
\bar\zeta^*\bar\phi^*\mathcal{E}_d^\vee \ar@{=}[r]\ar@{->>}[d] &
\bar\zeta^*\mathcal{E}_{d,r}^\vee \ar@{=}[r]\ar@{->>}[d] & (\bar\vartheta^*\mathcal{E}_d)_Z^\vee
\ar@{=}[r]\ar@{->>}[d] &
\bar i^*\bar\vartheta^*\mathcal{E}_d^\vee \ar@{->>}[d] \\
{\overline{\eta\zeta}\,}^*\mathcal{T}_{d^\vee,r} \ar@{=}[r] &
\bar\zeta^*\bar\eta^*\mathcal{T}_{d^\vee,r} \ar[r]^{\simeq} & \bar\zeta^*\mathcal{T}_{d,r}^\e \ar[r]^{\simeq}
& (\mathcal{T}_{d^\vee,r})_Z^{\e\e} \ar@{=}[r] & (\mathcal{T}_{d^\vee,r})_Z \ar@{=}[r] & \bar
i^*\mathcal{T}_{d^\vee,r}
} %
\]
By the universal property of $Q_{d^\vee,r}$, we have $\eta\zeta=i$ and hence $\eta'\zeta=\id_Z$.
\end{proof}

We now introduce a commutative diagram as follows.

\[
\xymatrix{ %
%\ds\prod_{l=0}^{r-1}{}_{Q_{r}} \BBS_{\widehat{m}+l}(\pi_*V_{\mathbb{P}^1_{Q_{r}}}^\vee(m),\pi_*\mathcal{E}_{r}^\vee(m)) \ar[r] & Q_{r} & \\
\ds\prod_{l=0}^{r-1}{}_{Q_{d,r}}\BBS_{\widehat{m}+l}(\theta^*\pi_*V_{\mathbb{P}^1_{Q_{r}}}^\vee\!(m),\theta^*\pi_*\mathcal{E}_{r}^\vee(m))
\ar@{^(.>}[d]^{\widetilde\eta}\ar[r] & Q_{d,r}
\ar@{^(->}[d]^{\eta}\ar[r]^{\phi} & Q_d \ar@{=}[d] \\
\ds\prod_{l=0}^{r-1}{}_{Q_{d^\vee,r}} \BBS_{\widehat{m}+l}(\pi_{*}V_{\mathbb{P}^1_{Q_{d^\vee,r}}}^\vee\!\!\!(m),\pi_{*}\mathcal{E}_{d^\vee\!\!,r}(m))\ar[r]\ar@{^(.>}[d]^{\widetilde\iota} &
Q_{d^\vee,r} \ar[r]^{\vartheta}\ar@{^(->}[d]^\iota &
Q_d \ar@{=}[d] \\
\ds\prod_{l=0}^{r-1}{}_{\Gr_{m,r}} \BBS_{\widehat{m}+l}(g^*\pi_*V_{\mathbb{P}^1_{Q_d}}^\vee(m),\mathcal{K}) \ar[r]\ar@{^(->}[d]^{\lambda} &
\Gr_{m,r}
\ar[r]^-{g}%\ar@{^(->}[d]^{\lambda}
& Q_d \ar@{=}[d] \\
\ds\prod_{l=0}^{r-1}{}_{Q_d} \BBS_{\widehat{m}+l}(\pi_*V_{\mathbb{P}^1_{Q_d}}^\vee(m),\pi_*\mathcal{E}_d^\vee(m))\ar[rr]& & Q_d
} %
\]
where
\begin{itemize}
\item $\Gr_{m,r}:=\Gr_{Q_d}({\widehat{m}+r},\pi_*\mathcal{E}_d^\vee(m))$, the relative Grassmannian over $Q_d$,
\item $g$ is the structure morphism,
\item $\mathcal{K}$ is the universal subbundle of $g^*\pi_*\mathcal{E}_d^\vee(m)$ on the Grassmannian $\Gr_{m,r}$,
\item $\iota$ is the closed embedding of Quot scheme into Grassmannian by Proposition (\ref{quotembedding}), and
\item $\eta$ is the closed embedding introduced by Proposition (\ref{eta}).
\end{itemize}

We now define the morphisms $\widetilde\eta$, $\widetilde\iota$ and $\lambda$, and show that they are
closed embeddings for all $m\gg 0$.

\textbf{Definition of $\widetilde\eta$.} For $m\gg 0$, we have the following identifications
\[
\begin{split}
%&\quad\ Q_{d,r}\times_{Q_{r}}\BBS_{\widehat{m}+l}\big(\pi_*V_{\mathbb{P}^1_{Q_{r}}}^\vee(m),\pi_*\mathcal{E}_{r}^\vee(m)\big) \stackrel{(1)}{=}
&\ \quad\BBS_{\widehat{m}+l}\big(\theta^*\pi_*V_{\mathbb{P}^1_{Q_r}}^\vee(m),\theta^*\pi_*\mathcal{E}_{r}^\vee(m)\big) \stackrel{(1)}{=}\BBS_{\widehat{m}+l}(\pi_{*}V_{\mathbb{P}^1_{Q_{d,r}}}^\vee(m),\pi_{*}\bar\theta^*\mathcal{E}_{r}^\vee(m)) \\
&\stackrel{(2)}{=}\BBS_{\widehat{m}+l}(\pi_{*}\bar\eta^*V_{\mathbb{P}^1_{Q_{d^\vee,r}}}^\vee(m),\pi_{*}\bar\eta^*\mathcal{E}_{d^\vee,r}^\vee(m)) \stackrel{(3)}{=}\BBS_{\widehat{m}+l}(\eta^*\pi_{*}V_{\mathbb{P}^1_{Q_{d^\vee,r}}}^\vee(m),\eta^*\pi_{*}\mathcal{E}_{d^\vee,r}^\vee(m)) \\
&\stackrel{(4)}{=}Q_{d,r}\times_{Q_{d^\vee,r}}\BBS_{\widehat{m}+l}(\pi_{*}V_{\mathbb{P}^1_{Q_{d^\vee,r}}}^\vee(m),\pi_{*}\mathcal{E}_{d^\vee,r}^\vee(m))
\end{split}
\]
for $l=0,\dots,r-1$.
The equality (1) follows from the flatness of $\theta$, (2) follows from $\bar\theta^*\mathcal{E}_r=\bar\eta^*\mathcal{E}_{d^\vee,r}$, (3) follows from the base-change formula for $m\gg 0$,
\[
\pi_{*}\bar\theta^*\mathcal{E}_{r}^\vee(m)=
\pi_{*}\bar\eta^*\mathcal{E}_{d^\vee,r}(m)\cong\eta^*\pi_{*}\mathcal{E}_{d^\vee,r}(m),\quad
\pi_*V_{\mathbb{P}^1_{Q_{d,r}}}=\eta^*\pi_*V_{\mathbb{P}^1_{Q_{d^\vee,r}}}
\]
and the equalities (4) follow from the base-change property of $\PProj$.

So we have the following identification
\[
\BBS_{\widehat{m}+l}\big(\theta^*\pi_*V_{\mathbb{P}^1_{Q_r}}^\vee(m),\theta^*\pi_*\mathcal{E}_{r}^\vee(m)\big) = Q_{d,r}\times_{Q_{d^\vee,r}}\prod_{l=0}^{r-1}{}_{Q_{d^\vee,r}} \BBS_{\widehat{m}+l}(\pi_{*}V_{\mathbb{P}^1_{Q_{d^\vee,r}}}^\vee\!\!\!(m),\pi_{*}\mathcal{E}_{d^\vee,r}^\vee(m))
\]
and $\widetilde\eta$ is defined simply to be the projection
\[
Q_{d,r}\times_{Q_{d^\vee,r}}\prod_{l=0}^{r-1}{}_{Q_{d^\vee,r}}\BBS_{\widehat{m}+l}(\pi_{*}V_{\mathbb{P}^1_{Q_{d^\vee,r}}}^\vee\!\!\!(m), \pi_{*}\mathcal{E}_{d^\vee,r}^\vee(m)) \to \prod_{l=0}^{r-1}{}_{Q_{d^\vee,r}}\BBS_{\widehat{m}+l}(\pi_{*}V_{\mathbb{P}^1_{Q_{d^\vee,r}}}^\vee(m),\pi_{*}\mathcal{E}_{d^\vee,r}^\vee(m))
\]
That $\widetilde\eta$ is a closed embedding follows from that $\eta$ is a closed embedding.

\textbf{Definition of $\widetilde\iota$.} We have the following identifications
\[
\BBS_{\widehat{m}+l}(\pi_{*}V_{\mathbb{P}^1_{Q_{d^\vee,r}}}^\vee\!\!\!(m),\pi_{*}\mathcal{E}_{d^\vee,r}^\vee(m)) \stackrel{(5)}{=}
\BBS_{\widehat{m}+l}(\iota^*\pi_{*}V_{\mathbb{P}^1_{Q_{d}}}^\vee(m),\iota^*\mathcal{K})
 \stackrel{(6)}{=}Q_{d^\vee,r}\times_{\Gr_{m,r}}
\BBS_{\widehat{m}+l}(\pi_{*}V_{\mathbb{P}^1_{Q_{d}}}^\vee(m),\mathcal{K})
\]
where (5) follows from the definition of $\iota$ and (6) follows from the base-change property of
$\PProj$. So we have the identification
\[
\prod_{l=0}^{r-1}{}_{Q_{d^\vee,r}}\BBS_{\widehat{m}+l}(\pi_{*}V_{\mathbb{P}^1_{Q_{d^\vee,r}}}^\vee(m), \pi_{*}\mathcal{E}_{d^\vee,r}^\vee(m))
=Q_{d^\vee,r}\times_{\Gr_{m,r}}\prod_{l=0}^{r-1}{}_{\Gr_{m,r}} \BBS_{\widehat{m}+l}(\pi_{*}V_{\mathbb{P}^1_{Q_{d}}}^\vee(m),\mathcal{K})
\]
and $\widetilde\iota$ is just the projection
\[
 Q_{d^\vee,r}\times_{\Gr_{m,r}}\prod_{l=0}^{r-1}{}_{\Gr_{m,r}}
\BBS_{\widehat{m}+l}(\pi_{*}V_{\mathbb{P}^1_{Q_{d}}}^\vee(m),\mathcal{K}) \to\prod_{l=0}^{r-1}{}_{\Gr_{m,r}} \BBS_{\widehat{m}+l}(\pi_{*}V_{\mathbb{P}^1_{Q_{d}}}^\vee(m),\mathcal{K})
\]
Since $\iota$ is a closed embedding, so is $\widetilde\iota$.

\textbf{Definition of $\lambda$.} For each $l$, $\ds\bigwedge^{\widehat m+l+1}\mathcal{K}$ is a subbundle
of $\ds\bigwedge^{\widehat m+l+1}g^*\pi_*\mathcal{E}_d^\vee(m)$. This gives rise to a subbundle map
\[
\HHom(\bigwedge^{\widehat m+l+1}g^*\pi_*V_{\mathbb{P}^1_{Q_d}}^\vee(m),\bigwedge^{\widehat m+l+1}\mathcal{K})\to \HHom(\bigwedge^{\widehat m+l+1}g^*\pi_*V_{\mathbb{P}^1_{Q_d}}^\vee(m),\bigwedge^{\widehat m+l+1}g^*\pi_*\mathcal{E}_d^\vee(m))
\]
which induces a closed embeddings
\[
\BBS_{\widehat{m}+l}(g^*\pi_*V_{\mathbb{P}^1_{Q_d}}^\vee(m),\mathcal{K})\hookrightarrow
\BBS_{\widehat{m}+l}(g^*\pi_*V_{\mathbb{P}^1_{Q_d}}^\vee(m),g^*\pi_*\mathcal{E}_d^\vee(m))%\times_{Q_d}\Gr({\widehat{m}+r},\pi_*\mathcal{E}_d^\vee(m))
\]
for $l=0,\dots,r-2$. Taking fibered products, we obtain an embedding
\begin{equation}\label{productembedding}
\begin{split}
\prod_{l=0}^{r-2}{}_{\Gr_{m,r}}\BBS_{\widehat{m}+l}(g^*\pi_*V_{\mathbb{P}^1_{Q_d}}^\vee(m),\mathcal{K}) &\hookrightarrow
\prod_{l=0}^{r-2}{}_{\Gr_{m,r}}\BBS_{\widehat{m}+l}(g^*\pi_*V_{\mathbb{P}^1_{Q_d}}^\vee(m),g^*\pi_*\mathcal{E}_d^\vee(m))= \\
&\prod_{l=0}^{r-2}{}_{Q_d}\BBS_{\widehat{m}+l}(\pi_*V_{\mathbb{P}^1_{Q_d}}^\vee(m),\pi_*\mathcal{E}_d^\vee(m)) \times_{Q_d}\Gr_{m,r}
\end{split}
\end{equation}
Note that we have the following commutative diagram
\[
\CDC{
  \BBS_{\widehat{m}+r-1}(g^*\pi_*V_{\mathbb{P}^1_{Q_d}}^\vee(m),\mathcal{K})
  \ar[r]\ar@{^(.>}[d]^-{\widetilde\delta} & \Gr_{m,r}
  \ar[r]^-{g}\ar@{^(->}[d]^-{\delta} & Q_d \ar@{=}[d] \\
  \BBS(h^*\ds\bigwedge^{\widehat m+r}\pi_*V_{\mathbb{P}^1_{Q_d}}^\vee(m),\mathcal{L})\ar[r]\ar@{^(->}[d]^-\sigma & \mathbb{P}\big(\ds\bigwedge^{\widehat{m}+r}\pi_*\mathcal{E}_d^\vee(m)\big)
  \ar[r]^-{h} & Q_d \ar@{=}[d] \\
  \BBS_{\widehat{m}+r-1}(\pi_*V_{\mathbb{P}^1_{Q_d}}^\vee(m),\pi_*\mathcal{E}_d^\vee(m))\ar[rr]& & Q_d
}
\]
where $h$ is the structure morphism, $\mathcal{L}$ is the universal subline bundle of
$h^*\ds\bigwedge^{\widehat m+r}\pi_*\mathcal{E}_d^\vee(m)$, and $\delta$ is the relative Pl\"ucker
embedding over $Q_d$. By definition of $\delta$, we have $\delta^*\mathcal{L}=\bigwedge^{\widehat m+r}\mathcal{K}$, hence
\[
\begin{split}
&\quad\ \BBS_{\widehat{m}+r-1}(g^*\pi_*V_{\mathbb{P}^1_{Q_d}}^\vee(m),\mathcal{K})=\BBS(\bigwedge^{\widehat m+r}g^*\pi_*V_{\mathbb{P}^1_{Q_d}}^\vee(m),\bigwedge^{\widehat m+r}\mathcal{K})\\
&=\BBS(\delta^*h^*\bigwedge^{\widehat m+r}\pi_*V_{\mathbb{P}^1_{Q_d}}^\vee(m),\delta^*\mathcal{L})=\BBS(h^*\bigwedge^{\widehat m+r}\pi_*V_{\mathbb{P}^1_{Q_d}}^\vee(m),\mathcal{L})\times_\mathbb{P} \Gr_{m,r}
\end{split}
\]
where $\mathbb{P}:=\mathbb{P}(\ds\bigwedge^{\widehat{m}+r}\pi_*\mathcal{E}_d^\vee(m))$. Thus
$\widetilde\delta$ is defined to be a projection and hence is also a closed embedding. The morphism
$\sigma$ is the relative Segr\'e embedding over $Q_d$:
\[
\begin{split}
&\quad\ \BBS\big(\ds\bigwedge^{\widehat m+r} h^*\pi_*V_{\mathbb{P}^1_{Q_d}}^\vee(m),\mathcal{L}\big) =\mathbb{P}\big(h^*\bigg(\ds\bigwedge^{\widehat m+r}\pi_*V_{\mathbb{P}^1_{Q_d}}^\vee(m)\bigg)^\vee\otimes \mathcal{L}\big) \\
& = \mathbb{P}(h^*\bigg(\ds\bigwedge^{\widehat m+r}\pi_*V_{\mathbb{P}^1_{Q_d}}^\vee(m)\bigg)^\vee)
=\mathbb{P}(\bigg(\ds\bigwedge^{\widehat m+r}\pi_*V_{\mathbb{P}^1_{Q_d}}^\vee(m)\bigg)^\vee)\times_{Q_d}
\mathbb{P}(\ds\bigwedge^{\widehat{m}+r}\pi_*\mathcal{E}_d^\vee(m)) \\
&\stackrel{\sigma}{\hookrightarrow} \mathbb{P}(\bigg(\ds\bigwedge^{\widehat m+r}\pi_*V_{\mathbb{P}^1_{Q_d}}^\vee(m)\bigg)^\vee\otimes
\bigwedge^{\widehat{m}+r}\pi_*\mathcal{E}_d^\vee(m))=\BBS_{\widehat m+r-1}(\pi_*V_{\mathbb{P}^1_{Q_d}}^\vee(m),\pi_*\mathcal{E}_d^\vee(m))
\end{split}
\]
Combining the embedding (\ref{productembedding}) with the composition of embeddings
$\sigma\widetilde\delta$, we obtain the embedding $\lambda$ as the composition of
\[
\begin{split}
&\quad\ \prod_{l=0}^{r-1}{}_{\Gr_{m,r}}\BBS_{\widehat{m}+l}(g^*\pi_*V_{\mathbb{P}^1_{Q_d}}^\vee(m),\mathcal{K}) \\
& \stackrel{(7)}{\hookrightarrow} (\prod_{l=0}^{r-2}{}_{Q_d}\BBS_{\widehat{m}+l}(\pi_*V_{\mathbb{P}^1_{Q_d}}^\vee(m), \pi_*\mathcal{E}_d^\vee(m))\times_{Q_d}\Gr_{m,r})\times_{\Gr_{m,r}}
\BBS_{\widehat{m}+r-1}(g^*\pi_*V_{\mathbb{P}^1_{Q_d}}^\vee(m),\mathcal{K}) \\
& =\prod_{l=0}^{r-2}{}_{Q_d}\BBS_{\widehat{m}+l}(\pi_*V_{\mathbb{P}^1_{Q_d}}^\vee(m),\pi_*\mathcal{E}_d^\vee(m))\times_{Q_d}
\BBS_{\widehat{m}+r-1}(g^*\pi_*V_{\mathbb{P}^1_{Q_d}}^\vee(m),\mathcal{K}) \\
&\stackrel{(8)}{\hookrightarrow}
\prod_{l=0}^{r-2}{}_{Q_d}\BBS_{\widehat{m}+l}(\pi_*V_{\mathbb{P}^1_{Q_d}}^\vee(m),\pi_*\mathcal{E}_d^\vee(m))\times_{Q_d} \BBS_{\widehat m+r-1}(\pi_*V_{\mathbb{P}^1_{Q_d}}^\vee(m),\pi_*\mathcal{E}_d^\vee(m)) \\
&= \prod_{l=0}^{r-1}{}_{Q_d}\BBS_{\widehat{m}+l}(\pi_*V_{\mathbb{P}^1_{Q_d}}^\vee(m),\pi_*\mathcal{E}_d^\vee(m))
\end{split}
\]
The embeddings (7) and (8) are induced by (\ref{productembedding}) and $\sigma\widetilde\delta$,
respectively.

Finally, we define $\beta=\lambda\widetilde\iota\widetilde\eta$. By construction, $\beta$ is an embedding. It remains to prove the commutativity of the diagram (\ref{ZembedDiagram}). For this, we use the universal homomorphism of the space of collineations. Consider the following diagram
\[
\xymatrix@R=0pc{
 & \mathring{Q}_{d,r}\ar@{_(->}[ld]_-\alpha\ar@{^(->}[rd]^j\ar[dd]^{\mathring{\varphi}} & \\
\ds\prod_{l=0}^{r-1}\BBS_{\widehat{m}+l}\big(\theta^*\pi_*V_{\mathbb{P}^1_{Q_r}}^\vee(m),\theta^*\pi_*\mathcal{E}_{r}^\vee(m)\big) \ar@{^(->}[dd]^\beta \ar@{-}[r] & \quad\ar[r] & Q_{d,r}\ar[dd]^\phi \\
 & \mathring{Z}_{d,r}\ar@{_(->}[ld]_-\gamma\ar@{^(->}[rd]^i & \\
\ds\prod_{l=0}^{r-1}\BBS_{\widehat{m}+l}\big(\pi_*V_{\mathbb{P}^1_{Q_d}}^\vee(m),\pi_*\mathcal{E}_{d}^\vee(m)\big) \ar[rr] & & Q_d
}
\]
For $l=0,\dots,r-1$, the universal homomorphism \[u:\bigwedge^{\widehat m+l+1}\theta^*\pi_*V_{\mathbb{P}^1_{Q_r}}^\vee(m)\to (\bigwedge^{\widehat m+l+1}\theta^*\pi_*\mathcal{E}_{r}^\vee(m))(1)\]
on $\BBS_{\widehat{m}+l}\big(\theta^*\pi_*V_{\mathbb{P}^1_{Q_r}}^\vee(m),\theta^*\pi_*\mathcal{E}_{r}^\vee(m)\big)$ is pullbacked by $\alpha$ to the homomorphism $j^*\bigwedge^{\widehat m+l+1}\rho_{r,m}$, i.e., we have a commutative diagram
\[
\CDR{
\alpha^*\ds\bigwedge^{\widehat m+l+1}\theta^*\pi_*V_{\mathbb{P}^1_{Q_r}}^\vee(m)\ar[r]^-{\alpha^*u}\ar@{=}[d] & \alpha^*((\ds\bigwedge^{\widehat m+l+1}\theta^*\pi_*\mathcal{E}_{r}^\vee(m))(1))\ar@{=}[d] \\
j^*\ds\bigwedge^{\widehat m+l+1}\theta^*\pi_*V_{\mathbb{P}^1_{Q_r}}^\vee(m)\ar[r] & j^*\ds\bigwedge^{\widehat m+l+1}\theta^*\pi_*\mathcal{E}_{r}^\vee(m)
}
\]
Similarly, we have
\[
\CDR{
\gamma^*\ds\bigwedge^{\widehat m+l+1}\pi_*V_{\mathbb{P}^1_{Q_d}}^\vee(m)\ar[r]^-{\gamma^*u}\ar@{=}[d] & \gamma^*((\ds\bigwedge^{\widehat m+l+1}\pi_*\mathcal{E}_{d}^\vee(m))(1))\ar@{=}[d] \\
i^*\ds\bigwedge^{\widehat m+l+1}\pi_*V_{\mathbb{P}^1_{Q_d}}^\vee(m)\ar[r] & i^*\ds\bigwedge^{\widehat m+l+1}\pi_*\mathcal{E}_{d}^\vee(m)
}
\]
By the construction of $\beta$, we have a commutative diagram
\[
\CD{
\beta^*\ds\bigwedge^{\widehat m+l+1}\pi_*V_{\mathbb{P}^1_{Q_d}}^\vee(m)\ar[rr]^-{\beta^*u}\ar@{=}[d] & & \beta^*((\ds\bigwedge^{\widehat m+l+1}\pi_*\mathcal{E}_{d}^\vee(m))(1))\ar@{=}[d] \\
\ds\bigwedge^{\widehat m+l+1}\theta^*\pi_*V_{\mathbb{P}^1_{Q_r}}^\vee(m)\ar[r]^-{u} & (\ds\bigwedge^{\widehat m+l+1}\theta^*\pi_*\mathcal{E}_{r}^\vee(m))(1)\ar[r] & (\ds\bigwedge^{\widehat m+l+1}\pi_*\mathcal{E}_{d,r}^\vee(m))(1)
}
\]
Combining the above, we get commutative diagrams for $l=0,\dots,r-1$:
\[
\CD{
\alpha^*\beta^*\ds\bigwedge^{\widehat m+l+1}\pi_*V_{\mathbb{P}^1_{Q_d}}^\vee(m)\ar[rr]^-{\alpha^*\beta^*u}\ar@{=}[d] & & \alpha^*\beta^*((\ds\bigwedge^{\widehat m+l+1}\pi_*\mathcal{E}_{d}^\vee(m))(1))\ar@{=}[d] \\
\alpha^*\ds\bigwedge^{\widehat m+l+1}\theta^*\pi_*V_{\mathbb{P}^1_{Q_r}}^\vee(m)\ar[r]^-{\alpha^*u}\ar@{=}[d] & \alpha^*((\ds\bigwedge^{\widehat m+l+1}\theta^*\pi_*\mathcal{E}_{r}^\vee(m))(1))\ar[r]\ar@{=}[d] & \alpha^*((\ds\bigwedge^{\widehat m+l+1}\pi_*\mathcal{E}_{d,r}^\vee(m))(1))\ar@{=}[d] \\
j^*\ds\bigwedge^{\widehat m+l+1}\theta^*\pi_*V_{\mathbb{P}^1_{Q_r}}^\vee(m)\ar[r]\ar@{=}[d] & j^*\ds\bigwedge^{\widehat m+l+1}\theta^*\pi_*\mathcal{E}_{r}^\vee(m)\ar[r] & j^*\ds\bigwedge^{\widehat m+l+1}\pi_*\mathcal{E}_{d,r}^\vee(m)\ar@{=}[d] \\
j^*\phi^*\ds\bigwedge^{\widehat m+l+1}\pi_*V_{\mathbb{P}^1_{Q_d}}^\vee(m)\ar[rr]\ar@{=}[d] & & j^*\phi^*\ds\bigwedge^{\widehat m+l+1}\pi_*\mathcal{E}_{d}^\vee(m)\ar@{=}[d] \\
\mathring{\varphi}^*i^*\ds\bigwedge^{\widehat m+l+1}\pi_*V_{\mathbb{P}^1_{Q_d}}^\vee(m)\ar[rr]\ar@{=}[d] & & \mathring{\varphi}^*i^*\ds\bigwedge^{\widehat m+l+1}\pi_*\mathcal{E}_{d}^\vee(m)\ar@{=}[d] \\
\mathring{\varphi}^*\gamma^*\ds\bigwedge^{\widehat m+l+1}\pi_*V_{\mathbb{P}^1_{Q_d}}^\vee(m)\ar[rr]^{\mathring{\varphi}^*\gamma^*u} & & \mathring{\varphi}^*\gamma^*((\ds\bigwedge^{\widehat m+l+1}\pi_*\mathcal{E}_{d}^\vee(m))(1))
}
\]
So we have $(\beta\alpha)^*u=\alpha^*\beta^*u=\mathring{\varphi}^*\gamma^*u=(\gamma\mathring{\varphi})^*u$
which implies $\beta\alpha=\gamma\mathring{\varphi}$ by the universal property of the space of collineations. This completes the proof of Proposition \ref{beta_exist}.

\end{document}